\newtheorem{thm}{Theorem}[section]
\newtheorem{fact}[thm]{Fact}
\newtheorem{cor}[thm]{Corollary}
\newtheorem{lem}[thm]{Lemma}
\newtheorem{prop}[thm]{Proposition}
\theoremstyle{definition}
\newtheorem{convn}[thm]{Convention}
\newtheorem{defn}[thm]{Definition}
\newtheorem{exm}[thm]{Example}
\newtheorem{rem}[thm]{Remark}
\newtheorem{quest}[thm]{Question}
\numberwithin{equation}{section}
\DeclareMathOperator{\N}{\mathbb {N}}
\DeclareMathOperator{\Z}{\mathbb {Z}}
\DeclareMathOperator{\Q}{\mathbb {Q}}
\DeclareMathOperator{\R}{\mathbb {R}}
\DeclareMathOperator{\cg}{cor}
\DeclareMathOperator{\conv}{conv}
\DeclareMathOperator{\depth}{depth}
\DeclareMathOperator{\gr}{gr}
\DeclareMathOperator{\lcm}{lcm}
\DeclareMathOperator{\lind}{ld}
\DeclareMathOperator{\Min}{Min}
\DeclareMathOperator{\pd}{pd}
\DeclareMathOperator{\rank}{rank}
\DeclareMathOperator{\reg}{reg}
\DeclareMathOperator{\supp}{supp}
\DeclareMathOperator{\smp}{\mathcal{SP}}
\DeclareMathOperator{\Tor}{Tor}
\newcommand{\mm}{\mathfrak m}
\newcommand{\nn}{\mathfrak n}
\newcommand{\pp}{\mathfrak p}
\def\alb {\boldsymbol {\alpha}}
\def\btb {\boldsymbol {\beta}}
\def\gmb {\boldsymbol {\gamma}}
\newcommand{\Fcc}{\mathcal F}
\newcommand{\Hcc}{\mathcal H}
\newcommand{\Pbb}{\mathbb P}
\newcommand{\Rcc}{\mathcal R}
\def \Gcc {\mathcal G}
\def\a {\mathbf a}
\def\e {\mathbf e}
\def\u {\mathbf u}
\def\v {\mathbf v}
\def\x {\mathbf x}
\begin{document}

\title[Regularity and Koszul property]{Regularity and Koszul property of symbolic powers of monomial ideals}

\author[L.X. Dung]{Le Xuan Dung}
\address{Faculty of Natural Sciences, Hong Duc University
No. 565 Quang Trung, Dong Ve, Thanh Hoa, Vietnam}
\email{lxdung27@gmail.com}

\author[T.T. Hien]{Truong Thi Hien}
\address{Faculty of Natural Sciences, Hong Duc University
No. 565 Quang Trung, Dong Ve, Thanh Hoa, Vietnam}
\email{hientruong86@gmail.com}

\author[H.D. Nguyen]{Hop D. Nguyen}
\address{Institute of Mathematics, VAST, 18 Hoang Quoc Viet, Hanoi, Viet Nam}
\email{ngdhop@gmail.com}

\author[T.N. Trung]{ Tran Nam Trung}
\address{Institute of Mathematics, VAST, 18 Hoang Quoc Viet, Hanoi, Viet Nam, and TIMAS, Thang Long University, Ha Noi, Vietnam.}
\email{tntrung@math.ac.vn}

\subjclass[2010]{13D02, 05C90, 05E40, 05E45.}
\keywords{Castelnuovo-Mumford regularity, symbolic power, componentwise linear, Koszul module, cover ideal}
\date{}

\begin{abstract}  Let $I$ be a homogeneous ideal in a polynomial ring over a field. Let $I^{(n)}$ be the $n$-th symbolic power of $I$. Motivated by results about ordinary powers of $I$, we study the asymptotic behavior of the regularity function $\reg (I^{(n)})$ and the maximal generating degree function $\omega(I^{(n)})$, when $I$ is a monomial ideal. It is known that both functions are eventually quasi-linear. We show that, in addition, the sequences $\{\reg I^{(n)}/n\}_n$ and $\{\omega(I^{(n)})/n\}_n$ converge to the same limit, which can be described combinatorially. We construct an example of an equidimensional, height two squarefree monomial ideal $I$ for which $\omega(I^{(n)})$ and  $\reg (I^{(n)})$ are not eventually linear functions. For the last goal, we introduce a new method for establishing the componentwise linearity of ideals. This method allows us to identify a new class of monomial ideals whose symbolic powers are componentwise linear. 
\end{abstract}

\maketitle
\section{Introduction}

Let $R = k[x_1,\ldots, x_r]$ be a polynomial ring over a field $k$. In this paper we investigate the maximal generating degree and the regularity of symbolic powers of  monomial ideals in $R$. Let $I$ be a homogeneous ideal of $R$. Then the $n$-th \emph{symbolic power} of $I$ is defined by
$$I^{(n)}= \bigcap_{\pp\in \Min(I)} I^nR_\pp\cap R,$$
where $\Min(I)$ is as usual the set of minimal associated prime ideals of $I$.

Symbolic powers were studied by many authors. While sharing some similar features with ordinary powers, the symbolic powers are usually much harder to deal with. One difficulty lies in the fact that the symbolic Rees algebra, defined as
\[
\Rcc_s(I)=R\oplus I^{(1)} \oplus I^{(2)} \oplus \cdots,
\]
is not noetherian in general. Examples of non-noetherian symbolic Rees algebras were discovered by Roberts \cite{Rob} and simpler examples were provided by Goto-Nishida-Watanabe \cite{GNW}. 

Denote by $\reg(I)$ and $\omega(I)$ to be the regularity of $I$ and the maximal degree of the minimal homogeneous generators $I$, respectively. By celebrated results by Cutkosky-Herzog-Trung \cite{CHT} and Kodiyalam \cite{K}, we know that $\reg I^n$ and $\omega(I^n)$ are eventually linear functions with the same leading coefficient. In particular, there exist the limits
\[
\lim_{n\to \infty} \frac{\reg I^n}{n}=\lim_{n\to \infty} \frac{\omega(I^n)}{n}
\]
and the common limit is an integer. On the other hand, by \cite[Proposition 7]{CTV}, when $I$ defines $2r+1$ points on a rational normal curve in $\Pbb^r$, where $r\ge 2$, then for all $n\ge 1$,
$$
\reg I^{(n)}=2n+1+\left\lfloor \frac{n-2}{r} \right \rfloor.
$$
Hence the function $\reg I^{(n)}$ is not eventually linear in general. Cutkosky \cite{Cut} could even construct a smooth curve in $\Pbb^3$ whose homogeneous defining ideal $I$ has the property that $\lim_{n\to \infty} \reg I^{(n)}/n$ is an irrational number. Another peculiar example is given in  \cite[Example 4.4]{CHT}: given any prime number $p\equiv 2$ modulo 3, there exist some field $k$ of characteristic $p$, and some collection of 17 fat points in $\Pbb^2_k$ whose defining ideal $I$ has the property that $\reg I^{(n)}$ is \emph{not eventually quasi-linear}.

While the question about eventual quasi-linear behavior of $\reg I^{(n)}$ has a negative answer in general, various basic questions remain tantalizing. For example:
\begin{enumerate}
 \item There was no known example of a homogeneous ideal $I$ in a polynomial ring for which the limit $\lim_{n\to\infty} \reg (I^{(n)})/n$ does not exist (Herzog-Hoa-Trung \cite[Question 2]{HeHoT});
 \item It remains an open question whether for every such homogeneous ideal $I$, the function $\reg I^{(n)}$ is bounded by a linear function;
 \item Even an answer for the analogue  of the last question for  $\omega(I^{(n)})$ remains unknown.
\end{enumerate}

In \cite[Theorem 4.9]{HoaTr}, it is shown that  $\lim_{n\to\infty} \reg (I^{(n)})/n$ exists if $I$ is a squarefree monomial ideal (but a description of the limit  was not provided). By \cite[Section 2]{HeHoT}, Question (2) (and hence of course (3)) has a positive answer if  either $I$ is a monomial ideal, or $\dim(R/I)\le 2$, or the singular locus of $R/I$ has dimension at most 1. The general case remains open for all of these questions.

Symbolic powers of monomial ideals are simpler than that of general ideals because symbolic Rees algebras of monomial ideals are noetherian \cite[Proposition 1]{Lyu}, \cite[Theorem 3.2]{HHT}. In the present paper, we address the following  questions for a monomial ideal $I$ of $R$. 

\begin{quest}
\label{quest_1_limits}
Does the limit $\lim\limits_{n\to\infty} \dfrac{\reg (I^{(n)})}{n}$ exist? If  it does, describe the limit in terms of $I$. The same questions for $\lim\limits_{n\to\infty} \dfrac{\omega(I^{(n)})}{n}$.
\end{quest}

\begin{quest}[Minh-T.N. Trung {\cite[Question A, part (i)]{MTr}}]
\label{quest_2_linear_behavior}
 Is the function $\reg I^{(n)}$ eventually linear if $I$ is squarefree?
\end{quest}
A motivation for Question \ref{quest_2_linear_behavior} is a result of Herzog, Hibi, Trung \cite{HHT}, that $\reg I^{(n)}$ is eventually quasi-linear. Another motivation is a recent result of Hoa et al. \cite{HKTT} on the existence of $\lim\limits_{n\to \infty}\depth I^{(n)}$ when $I$ is a squarefree monomial ideal. It is worth pointing out that Question \ref{quest_2_linear_behavior} has a negative answer for non-squarefree monomial ideals; see Example \ref{exm_non-linear}.

Extending previous result of Hoa and T.N. Trung, our first main result answers Question \ref{quest_1_limits} in the positive for \emph{both} limits (they are actually the same). We also describe explicitly the limits in terms of certain polyhedron associated to $I$. Our second main result answers the other question in the negative. In fact, a counterexample is given using equidimensional height 2 squarefree monomial ideals, in other words, \emph{cover ideals} of graphs. Interestingly, at the same time, our counterexample also gives a negative answer for the analogue of Question \ref{quest_2_linear_behavior} for the function $\omega(I^{(n)})$.

In detail, the main tool for Question \ref{quest_1_limits} comes from the theory of convex polyhedra. Assume that $I$ admits a minimal primary decomposition
$$I = Q_1\cap \cdots\cap Q_s\cap Q_{s+1}\cap \cdots \cap Q_t$$
where $Q_1,\ldots,Q_s$ are all the primary monomial ideals associated to the minimal prime ideals of $I$. We define certain polyhedron associated to $I$ as follows:  
$$
\smp(I)=NP(Q_1)\cap\cdots \cap NP(Q_s) \subset \R^r,
$$ 
where $NP(Q_i)$ is the Newton polyhedron of $Q_i$. Then $\smp(I)$ is a convex polyhedron in $\R^r$. For a vector  $\v = (v_1,\ldots,v_r)\in \R^r$, denote $|\mathbf v| = v_1+\cdots+v_r$. Let
$$\delta(I) = \max\{|\v| \mid \v \text { is a vertex of } \smp(I)\}.$$ 
Answering Question \ref{quest_1_limits}, our first two main results are:
\begin{thm}[Theorems \ref{thm_limit_d} and \ref{thm_limit_reg}]
For all monomial ideals $I$, there are equalities
$$
\lim_{n\to\infty} \dfrac{\omega(I^{(n)})}{n} = \lim_{n\to\infty} \dfrac{\reg (I^{(n)})}{n} = \delta(I).
$$
\end{thm}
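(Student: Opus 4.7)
The plan is to sandwich both sequences between $n\delta(I) - O(1)$ and $n\delta(I) + O(1)$, with the combinatorial side resting on a polyhedral description of $I^{(n)}$ and the regularity side combining the $d$-estimate with a local-cohomology bound. From the monomial primary decomposition one has $I^{(n)} = Q_1^n \cap \cdots \cap Q_s^n$; for each primary component $Q_i$ with radical $\pp_i = (x_{j_1}, \ldots, x_{j_k})$, membership $\x^{\mathbf{b}} \in Q_i^n$ becomes a finite system of integer inequalities whose real relaxation cuts out precisely the scaled Newton polyhedron $n \cdot NP(Q_i)$. Thus $\{\mathbf b \in \N^r : \x^{\mathbf b} \in I^{(n)}\}$ sits inside the lattice points of $n \cdot \smp(I)$, with the two sets differing only by a bounded rounding discrepancy independent of $n$.

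For the $d$-limit, a minimal generator $\x^{\mathbf{a}}$ of $I^{(n)}$ corresponds to a componentwise-minimal lattice point of this exponent set. Since $\smp(I)$ is up-closed with recession cone $\R^r_{\ge 0}$, minimal lattice points of $n \cdot \smp(I)$ must lie near its vertices, and the vertices of $n \cdot \smp(I)$ are exactly $n$ times the vertices of $\smp(I)$; this forces $|\mathbf a| \le n\delta(I) + C$ for a constant $C$ depending only on $I$. A matching lower bound is obtained by fixing a vertex $\v$ of $\smp(I)$ achieving $|\v| = \delta(I)$, rounding $n\v$ up to a lattice point in $I^{(n)}$, and selecting a minimal generator of $I^{(n)}$ dividing it; tracking the floors shows this generator has $|\mathbf a| \ge n\delta(I) - C'$. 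Combined, these yield $\lim d(I^{(n)})/n = \delta(I)$, and in particular the existence of the limit.

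For the $\reg$-limit, the inequality $\liminf \reg(I^{(n)})/n \ge \delta(I)$ is immediate from $d(I^{(n)}) \le \reg(I^{(n)})$. For the matching $\limsup \reg(I^{(n)})/n \le \delta(I)$, I would use a combinatorial formula for the multigraded pieces of $H^i_\mm(R/I^{(n)})$ (Takayama's formula or a variant for symbolic powers) to extract, for each nonvanishing piece $H^i_\mm(R/I^{(n)})_{-\mathbf a} \ne 0$, the bound $|\mathbf a| + i \le n\delta(I) + O(1)$ from a polyhedral and simplicial estimate in $\smp(I)$. By the standard characterization $\reg(R/I^{(n)}) = \max \{|\mathbf a| + i : H^i_\mm(R/I^{(n)})_{-\mathbf a} \ne 0\}$, this yields the required upper bound. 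The main technical obstacle is precisely this step: the $d$-limit is a clean convex-geometry estimate, but converting the vertex data of $\smp(I)$ into a sharp bound on cohomological degrees uniformly in $n$ requires synchronizing the polyhedral faces with the Stanley--Reisner combinatorics of the associated squarefree quotients, and showing the error terms remain bounded rather than growing with $n$.
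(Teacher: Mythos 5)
Your overall strategy for the $d$-limit (sandwich $d(I^{(n)})$ between $n\delta(I)\pm O(1)$ via the polyhedron $\smp(I)$) is the paper's strategy, but the proposal asserts, rather than proves, exactly the two facts that carry the technical weight. First, the claim that $\{\mathbf b : \x^{\mathbf b}\in I^{(n)}\}$ and $n\smp(I)\cap\N^r$ ``differ only by a bounded rounding discrepancy independent of $n$'' is not a rounding issue at all: membership in $Q_i^n$ is \emph{not} cut out by linear inequalities (that characterizes the integral closure $\overline{Q_i^n}$), so the lattice points of $n\,NP(Q_i)$ form the exponent set of $\overline{Q_i^n}$, not of $Q_i^n$. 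Controlling the gap uniformly in $n$ requires the Brian\c{c}on--Skoda-type identity $\overline{Q^n}=Q^{n-p}\overline{Q^p}$ with $0\le p\le r-1$, which is what drives the paper's Lemma \ref{lem_fromI(n)toJn}; the same issue invalidates your lower-bound step ``rounding $n\v$ up to a lattice point in $I^{(n)}$'' --- the rounded point lies in $J_n=\bigcap\overline{Q_i^n}$ but not obviously in $I^{(n)}$, and the paper must multiply by explicit correction monomials $f_i$ to force membership in each $Q_i^n$. Second, ``selecting a minimal generator of $I^{(n)}$ dividing it'' and claiming it still has degree $\ge n\delta(I)-C'$ is a genuine gap: a priori that divisor could have much smaller degree. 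The paper rules this out using a uniform bound $|\a_j|\le r^2 d(I)^{r-1}$ on the facet normals of $\smp(I)$ (Lemma \ref{lem_facets}), which shows that subtracting a bounded multiple of any $\e_i$ from a point near the vertex $n\v$ already exits the polyhedron. Nothing in your ``tracking the floors'' supplies this.

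For the regularity limit you correctly reduce the $\liminf$ to $d\le\reg$, but you explicitly defer the $\limsup$ bound, which is the remaining substance of the theorem. Your proposed route through Takayama's formula for $H^i_\mm(R/I^{(n)})$ is plausible and genuinely different from the paper's, which instead bounds every $t_i(I^{(n)})$ via the multigraded Betti number formula $\beta_{i,\alb}=\dim_k\widetilde{H}_{i-1}(K^{\alb}(I^{(n)});k)$: if $\beta_{i,\alb}\ne 0$ then $K^{\alb}(I^{(n)})$ is not a cone, so in every coordinate direction $j$ there is a facet $\tau\not\ni j$ with $x^{\alb-\tau-\e_j}\notin I^{(n)}$, and the same facet-normal bound as above caps $\alpha_j$ and hence $|\alb|$ by $\delta(I)n+O(1)$. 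Whichever formula one uses, this synchronization of the polyhedral data with the simplicial one is the step you name as the ``main technical obstacle'' and leave open, so the proposal as written does not constitute a proof.
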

While computing the regularity of the symbolic powers of $I$ it is difficult, the computation of $\delta(I)$ is fairly simple  by linear programming technique. 

It is not hard to show that $\delta(I)\ge \omega(I)$ if $I$ is a squarefree monomial ideal (Lemma \ref{lem_upper_bound_omega_via_delta}). Moreover, there are many examples in which $\delta(I)=\omega(I)$. This is the case when $I$ is a quadratic squarefree monomial ideal (hence $\omega(I)=2$), thanks to a result by Bahiano \cite{B}; see Example \ref{ex_delta_edge_ideals}. Let $G$ be an arbitrary simple graph with the vertex set $V(G)=\{1,\ldots,r\}$ and the edge set $E(G)$. Recall that the \emph{cover ideal} of $G$ is defined by
$$
J(G) = \bigcap_{\{i,j\} \in E(G)} (x_i,x_j).
$$
We also prove in Theorem \ref{thm_delta_JG_equal_taumaxG} that $\delta(J(G))=\omega(J(G))$, if $G$ is either bipartite, unmixed, or claw-free. An exact formula for $\reg J(G)^{(n)}$ remains elusive even for such graphs; see, for example, \cite{SF1,SF2}, for related work. Proposition \ref{prop_non_bipartite_delta_JG_equal_taumaxG} provides another large class of graphs for which the equality $\delta(J(G))=\omega(J(G))$ holds. Our main tool for proving Theorem \ref{thm_delta_JG_equal_taumaxG} and Proposition \ref{prop_non_bipartite_delta_JG_equal_taumaxG} is a combinatorial formula for $\delta(J(G))$ in Theorem \ref{thm_delta_JG}. It looks challenging to interpret $\delta(J(G))$ in terms of other known graph-theoretical invariants of $G$.

We next study componentwise linear ideals in the sense of Herzog and Hibi \cite{HH} which are also known as \emph{Koszul} ideals \cite{HIy}. Our main tool is the following new result on Koszul ideals, which is proved by the theory of linearity defect. 

\begin{prop}[See Theorem \ref{thm_ysplit}]
\label{prop_Koszul_criterion}
Let $R$ be a polynomial ring over $k$ with the graded maximal ideal $\mm$. Let $x$ be a non-zero linear form, $I'$ and $T$ non-zero homogeneous ideals of $R$ such that the following conditions are simultaneously satisfied:
\begin{enumerate}[\quad \rm(i)]
 \item $I'$ is Koszul;
 \item $T\subseteq \mm I'$;
 \item $x$ is a regular element with respect to $R/T$ and $\gr_\mm T$, the associated graded module of $T$ with respect to the $\mm$-adic filtration.
\end{enumerate}
Denote $I=xI'+T$. Then $I$ is Koszul if and only if $T$ is so.
\end{prop}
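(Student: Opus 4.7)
The natural strategy is to exhibit $I = xI' + T$ as a Betti splitting in the sense of Francisco--H\`a--Van Tuyl, with pieces $J := xI'$ and $K := T$, and then exploit Koszulness of $I'$ together with the regularity of $x$ on $\gr_\mm T$. First I would compute $J \cap K = xT$: if $t = xa$ with $a \in I'$ and $t \in T$, then $xa \in T$ combined with the regularity of $x$ on $R/T$ forces $a \in T$, so $t \in xT$. Next, to show that $I = J+K$ is a Betti splitting, I verify that the inclusion in the Mayer--Vietoris sequence
\[
0 \to xT \to xI' \oplus T \to I \to 0
\]
factors through $\mm \cdot (xI' \oplus T)$. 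Indeed $xT \subseteq x\mm I' = \mm(xI')$ by hypothesis~(ii), and $xT \subseteq \mm T$ because $x \in \mm$. By the standard minimality argument (lifting such a map of modules to minimal free resolutions forces the lift to land in $\mm F_\bullet$), it vanishes on $\Tor_i^R(-, k)$. Hence the Tor long exact sequence degenerates into short exact sequences, giving the Betti splitting formula
\[
\beta_{i,j}^R(I) \;=\; \beta_{i, j-1}^R(I') + \beta_{i,j}^R(T) + \beta_{i-1, j-1}^R(T).
\]

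For the forward implication, suppose $T$ is Koszul. Then $J \cong I'(-1)$, $K = T$, and $J \cap K \cong T(-1)$ are all componentwise linear (using hypothesis~(i) and the fact that shifts preserve componentwise linearity). The classical theorem on Betti splittings (Francisco--H\`a--Van Tuyl, with later refinements due to Herzog--Hibi--Zheng and Nguyen--Vu) then yields that $I = J + K$ is componentwise linear, i.e.\ Koszul.

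For the reverse implication, assume $I$ is Koszul; this is where hypothesis~(iii) on $\gr_\mm T$ becomes essential. The plan is two-step: first derive Koszulness of $T/xT$ from the short exact sequence
\[
0 \to xI' \to I \to T/xT \to 0,
\]
obtained from $J \cap K = xT$ (equivalently $I/xI' \cong T/xT$), together with Koszulness of $xI' \cong I'(-1)$; then transfer this to $T$ itself via the Herzog--Iyengar theorem (building on work of \c{S}ega) which asserts that when $x$ is regular on both $T$ and $\gr_\mm T$, $\lind_R T = \lind_{R/xR}(T/xT)$, so $T/xT$ Koszul implies $T$ Koszul. The technical heart of the proof is the first of these two steps: the last SES is \emph{not} Tor-split, because the minimal generators of $xI'$ remain minimal in $I$, so $xI' \not\subseteq \mm I$. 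I expect to overcome this obstacle by combining the Betti splitting formula from the first paragraph, which constrains $\beta_{i,j}^R(T)$ by $\beta_{i,j}^R(I)$ and $\beta_{i,j}^R(I')$, with a direct analysis of the linear part of a minimal resolution of $T/xT$ assembled from those of $I$, $I'$, and $T$, tracking how Koszulness propagates through the Tor long exact sequence of the displayed SES.
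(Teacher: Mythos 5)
Your skeleton (the decomposition $I=xI'+T$ with $xI'\cap T=xT$, a Betti splitting, the sequence $0\to xI'\to I\to T/xT\to 0$, and specialization from $T/xT$ back to $T$) is the same as the paper's, but both implications break at their key step. For the forward direction, the ``classical theorem'' you invoke --- that a Betti splitting $I=J+K$ with $J$, $K$ and $J\cap K$ all componentwise linear forces $I$ to be componentwise linear --- is false. In $k[x,y]$ take $J=(x^2)$ and $K=(y^2)$: then $J$, $K$ and $J\cap K=(x^2y^2)$ are principal, hence Koszul, and $J+K=(x^2,y^2)$ is a Betti splitting (the mapping cone of $(x^2y^2)\to (x^2)\oplus (y^2)$ is the minimal resolution), yet $(x^2,y^2)$ is not componentwise linear. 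The graded Betti numbers of $I$ do not determine those of the truncations $I_{\langle d\rangle}$, so no statement at the level of Betti splittings alone can deliver Koszulness of the sum. Relatedly, your justification of the Betti splitting itself is unsound: the ``standard minimality argument'' that $\varphi(M)\subseteq \mm N$ forces $\Tor_i(\varphi)=0$ fails for $i\ge 1$ (for $N=(x^3,y^3)$ the map $\Tor_1(k,\mm N)\to \Tor_1(k,N)$ is surjective); one needs the target to be Koszul for the component $xT\to xI'$, and the factorization through multiplication by $x\in\mm$ for the component $xT\to T$.

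For the reverse direction you stop exactly at what you correctly identify as the technical heart: no argument is given that $I$ Koszul implies $T/xT$ Koszul, only a hope of ``tracking the linear part''. The ingredient you are missing --- and the place where hypotheses (ii) and (iii) actually do their work --- is the equality $xI'\cap \mm^s I=\mm^s(xI')$ for \emph{every} $s\ge 1$: an element of $xI'\cap \mm^s T$ lies in $(x)\cap \mm^s T=x(\mm^s T:x)$, the $\gr_\mm T$-regularity of $x$ gives $\mm^s T:x=\mm^{s-1}T$, and $T\subseteq \mm I'$ then places it in $x\mm^s I'$. Feeding this into Lemma \ref{lem_relativeext} (a result of Nguyen on short exact sequences with Koszul subobject, which requires the intersection condition for all $s$, not just $s=1$) yields $\lind_R I=\lind_R(T/xT)$ in one stroke, and this settles both implications once $\lind_R T$ and $\lind_R(T/xT)$ are compared via the Koszul complex of $x$ on $T$ (Iyengar--R\"omer). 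The Betti splitting you set up records only the $s=1$ instance of this condition, and the $(x^2)+(y^2)$ example above shows that the $s=1$ information cannot suffice.
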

A common method (among a dozen of others), to establish the Koszul property of an ideal is to show that it has linear quotients. Compared with this method, the criterion of Proposition \ref{prop_Koszul_criterion} has the advantage that it does not require the knowledge of a system of generators of the ideal. It just asks for the knowledge of a decomposition which is in many cases not hard to obtain, the more so if we work with monomial ideals. Indeed, let $I$ be a monomial ideal of $R$, and $x$ one of its variables. Then we always have a decomposition $I=xI'+T$, where $I', T$ are monomial ideals, and $x$ does not divide any minimal generator of $T$. For such a decomposition, condition (iii) in Proposition \ref{prop_Koszul_criterion} is automatic. Hence given conditions (i) and (ii), we can prove the Koszulness of $I$ by passing to $T$, which lives in a smaller polynomial ring.  

Proposition \ref{prop_Koszul_criterion} is interesting in its own and has further applications, which we hope to pursue in future work. The main application of this proposition in our paper is to study the Koszulness of symbolic powers of the cover ideal $J(G)$ of a graph $G$. By using Proposition \ref{prop_Koszul_criterion}, we prove:

\begin{thm}[Theorem \ref{thm_Koszul_symbolicpower_coverideals}]
\label{thm_main_3_Koszul} 
Let $G$ be the graph obtained by adding to each vertex of a graph $H$ at least one pendant. Then all the symbolic powers of $J(G)$ are Koszul.
\end{thm}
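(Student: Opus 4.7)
We argue by induction on $|V(H)|$, applying Proposition~\ref{prop_Koszul_criterion} iteratively. The case $|V(H)| \le 1$ is handled directly: when $|V(H)| = 1$, the graph $G$ is a star centered at $v$ with pendants $y_1, \ldots, y_s$, and $J(G)^{(n)}$ has the explicit minimal generating set $\{x_v^k y_1^{n-k} \cdots y_s^{n-k} : 0 \le k \le n\}$, to which the argument below applies. For the inductive step, fix $v \in V(H)$ and consider the chain of iterated colon ideals
\[
I_k := \bigl(J(G)^{(n)} : x_v^k\bigr), \qquad k = 0, 1, \ldots, n,
\]
so that $I_0 = J(G)^{(n)}$ and $I_n = J(G - v)^{(n)} \cdot R$. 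A monomial $x^{\mathbf a}$ lies in $I_k$ precisely when $a_v + a_u \ge n - k$ for every neighbor $u$ of $v$ in $G$, and $a_i + a_j \ge n$ for every edge $\{i, j\}$ of $G - v$.

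Decompose $I_k = x_v \cdot I_{k+1} + T_k$, where $T_k$ is generated by the minimal generators of $I_k$ not divisible by $x_v$. Condition (iii) of Proposition~\ref{prop_Koszul_criterion} is automatic since $T_k$ is monomial and does not involve $x_v$. For condition (ii), pick any pendant $y$ of $v$: every minimal generator $t$ of $T_k$ satisfies $a_y(t) = n - k$, and one checks that $t/y \in I_{k+1}$, because the only edge whose constraint changes on passing from $t$ to $t/y$ is $\{v, y\}$, whose threshold relaxes from $n - k$ to $n - k - 1$ in going from $I_k$ to $I_{k+1}$. The terminal ideal $I_n = J(G-v)^{(n)} \cdot R$ is Koszul by the outer inductive hypothesis, since $G - v$ is (up to isolated former pendants of $v$, which do not affect Koszulness) of the required form with underlying graph $H - v$ of cardinality $|V(H)| - 1$.

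Iterating Proposition~\ref{prop_Koszul_criterion} down the chain $I_n, I_{n-1}, \ldots, I_0$ reduces the theorem to showing that each $T_k$ is Koszul. A direct calculation gives
\[
T_k = \Bigl(\prod_{u \in N_G(v)} x_u^{n-k}\Bigr) \cdot \Bigl(J(G_1)^{(n)} \,:\, \prod_{u \in N_H(v)} x_u^{n-k}\Bigr),
\]
with $G_1 := G - v - \{\text{pendants of }v\}$ again of the required form (underlying graph $H - v$). Since multiplication by a monomial preserves Koszulness up to shift, it suffices to prove that the colon ideal on the right is Koszul. This I would establish by a nested application of Proposition~\ref{prop_Koszul_criterion}, using the linear forms $x_u$ for $u \in N_H(v)$ one at a time: at each step condition (ii) is verified by dividing by a pendant of $u$ in $G_1$ (which exists because $u \in V(H)$), and the terminal ideals at the bottom of the nested recursion are symbolic powers $J(G')^{(n)}$ for subgraphs $G' \subseteq G_1$ of the required form with strictly smaller underlying $H$, hence Koszul by the outer inductive hypothesis. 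The main obstacle is the management of this nested recursion; the essential structural input that makes condition (ii) verifiable at every level---both for the outer chain along $x_v$ and for each inner chain along an $x_u$---is the standing hypothesis that every vertex of $H$ has a pendant in $G$.
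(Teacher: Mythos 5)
Your skeleton is sound and in fact runs parallel to the paper's argument: the paper also inducts on $|V(H)|$, also isolates one vertex $v=x_d$ of $H$ together with its pendants, and also uses a pendant of $v$ to verify condition (ii) of Proposition~\ref{prop_Koszul_criterion} at each step of a descending chain (the paper phrases the chain multiplicatively, via $(z,m_1)^p\cap(mz^q)\cap I = z\bigl((z,m_1)^{p-1}\cap\cdots\bigr)+\bigl((m_1^p)\cap\cdots\bigr)$, whereas you phrase it via iterated colons $I_k=(J(G)^{(n)}:x_v^k)$; since $(m)\cap I=m\,(I:m)$ for a monomial $m$, these are the same computation in two notations). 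Your identification of $T_k$ and your verification of conditions (ii) and (iii) along the outer chain are correct.

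The genuine gap is the Koszulness of the $T_k$'s, i.e.\ of the colon ideals $J(G_1)^{(n)}:\prod_{u\in N_H(v)}x_u^{n-k}$. Your stated inductive hypothesis only asserts that $J(G')^{(n)}$ itself is Koszul for smaller $H$; it says nothing about colon ideals of $J(G')^{(n)}$ by monomials supported on $V(H')$, which is what you actually need. The ``nested recursion'' you defer is not a routine bookkeeping matter: each inner chain produces new $T$-parts that are again monomial multiples of colon ideals $J(G'')^{(n)}:m''$ for yet smaller graphs, so the recursion never bottoms out in plain symbolic powers, and your appeal to ``the outer inductive hypothesis'' at the terminal ideals does not close the argument as written. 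The fix is to strengthen the statement being proved to: for every $G$ of the required form and every monomial $m$ with $\supp(m)\subseteq V(H)$, the ideal $J(G)^{(n)}:m$ (equivalently $(m)\cap J(G)^{(n)}$) is Koszul. With that strengthening, the colon ideals appearing in your $T_k$'s are handled by a single application of the induction hypothesis on $|V(H)|$ (plus base change, Lemma~\ref{lem_basechange_ld}), and your nested recursion collapses. This strengthened statement, together with the technical lemma that carries it through the chain (Lemma~\ref{Koszul-Products} in the paper, whose support conditions on $f$ and $g$ encode exactly the separation between the pendants of $v$ and the rest of $G$), is roughly half the content of the paper's proof and is the part your proposal leaves unproved.
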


It is worth mentioning that, via Alexander duality, this can be seen as a generalization of previous work of Villarreal \cite{Vi1} and Francisco-H\`a \cite{FH} on the Cohen-Macaulay property of graphs.

In order to give a counter-example to Question \ref{quest_2_linear_behavior}, we apply Theorem \ref{thm_main_3_Koszul} for corona graphs. 

\begin{thm}[Theorem \ref{thm_non-linear}]
\label{thm_main_non-linear}
For $m\geqslant 3$ and $s\geqslant 2$, let $G=\cg(K_m,s)$ be the graph obtained from the complete graph on $m$ vertices $K_m$ by adding exactly $s$ pendants to each of its vertex. Let $J = J(G)$. Then for all $n\geqslant 0$,
\begin{enumerate}[\quad \rm(1)]
\item $\reg (J^{(2n)})=\omega(J^{(2n)}) = m(s+1)n$;
\item $\reg (J^{(2n+1)})=\omega(J^{(2n+1)}) = m(s+1)n + m+s-1$.
\end{enumerate}
In particular, for all $n$,
$$
\reg (J^{(n)})=\omega(J^{(n)})=(m+s-1)n + (m-2)(s-1) \left\lfloor \frac{n}{2} \right \rfloor,
$$ 
which is not an eventually linear function of $n$.

\end{thm}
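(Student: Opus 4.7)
My plan has three stages: (i) reduce $\reg(J^{(n)})$ to $d(J^{(n)})$ via Koszulness; (ii) reduce $d(J^{(n)})$ to a one-variable optimization via low-degree generation of $\Rcc_s(J)$; (iii) compute the ingredients of that optimization, verify sharpness, and deduce non-linearity. Stage (i) is immediate: since $G = \cg(K_m,s)$ is obtained from $K_m$ by adjoining $s \ge 2$ pendants at each vertex, Theorem \ref{thm_main_3_Koszul} yields that $J^{(n)}$ is Koszul (componentwise linear) for every $n \ge 0$; as the regularity of a componentwise linear ideal equals its top generator degree, $\reg(J^{(n)}) = d(J^{(n)})$, and the task reduces to computing $d(J^{(n)})$.

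For (ii), Herzog--Hibi--Trung's theorem \cite[Theorem 5.1]{HHT} (cf.\ Theorem \ref{thm_gen_degree_symbolicpowers}) states that $\Rcc_s(J)$ is generated in degrees at most $2$, so $J^{(n)} = \sum_{a+2b=n} J^a (J^{(2)})^b$. Since any minimal monomial generator of a sum of monomial ideals is already a minimal generator of one summand,
$$d(J^{(n)}) \le \max_{a+2b=n}\bigl(a\cdot d(J) + b\cdot d(J^{(2)})\bigr).$$
The minimal vertex covers of $G$ are $V(K_m)$ (degree $m$) and, for each $i \in V(K_m)$, the set $V(K_m)\setminus\{i\}$ together with the $s$ pendants at $i$ (degree $m+s-1$), giving $d(J) = m+s-1$. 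To compute $d(J^{(2)})$, use that $\x^\alb \in J^{(2)}$ iff $\alpha_i + \alpha_j \ge 2$ for every edge. A minimal such $\alb$ falls into one of two types: either exactly one coordinate on $V(K_m)$ is zero (forcing its $K_m$-neighbors and its own pendants to take value $2$, producing $u_i^2$ of degree $2(m+s-1)$), or all $K_m$-coordinates lie in $\{1,2\}$, in which case the largest minimal generator is
$$M := \prod_{i \in V(K_m)} x_i \cdot \prod_{i,k} y_{i,k}$$
of degree $m(s+1)$. The inequality $m(s+1) - 2(m+s-1) = (m-2)(s-1) > 0$ (using $m \ge 3$, $s \ge 2$) then pins down $d(J^{(2)}) = m(s+1)$.

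For (iii), the upper bound becomes $n(m+s-1) + b(m-2)(s-1)$ on the constraint $a+2b=n$, maximized at $b = \lfloor n/2 \rfloor$:
$$d(J^{(n)}) \le (m+s-1)n + (m-2)(s-1)\lfloor n/2 \rfloor,$$
which splits by parity into the two cases of the theorem. Sharpness is achieved by exhibiting $M^{n/2}$ for even $n$ and $u_i \cdot M^{(n-1)/2}$ for odd $n$ as minimal generators of $J^{(n)}$: for each exponent, some edge inequality $\alpha_i + \alpha_j \ge n$ is tight, blocking any reduction. Non-linearity then follows because the consecutive differences alternate between $m+s-1$ and $m(s+1)-(m+s-1)$, whose gap $(m-2)(s-1)$ is strictly positive. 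The principal obstacle I anticipate is the case analysis of minimal generators of $J^{(2)}$ yielding $d(J^{(2)}) = m(s+1)$; once that is in place, the remainder is essentially the Koszulness theorem combined with a one-variable optimization.
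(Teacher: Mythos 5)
Your proposal is correct, and for the core computation of $d(J^{(n)})$ it takes a genuinely different route from the paper. The two endpoints agree: like the paper, you invoke Theorem \ref{thm_Koszul_symbolicpower_coverideals} and Lemma \ref{lem_Koszul_gendeg} to get $\reg(J^{(n)})=d(J^{(n)})$, and Herzog--Hibi--Trung's Theorem \ref{thm_symbolpower_JG} to reduce everything to symbolic powers of order at most two. The middle is where you differ: the paper deduces $d(J^{(2n)})=2n\delta(J)$ and $d(J^{(2n+1)})=2n\delta(J)+d(J)$ from the general Theorem \ref{thm_gen_degree_symbolicpowers} (whose proof rests on the polyhedral apparatus of Lemma \ref{L0}, Lemma \ref{lem_vertex_smpJG} and Theorem \ref{thm_delta_JG}), and then computes $\delta(J)=\tfrac{1}{2}m(s+1)$ combinatorially in Lemma \ref{prop_deg_delta_corKm,s}; you bypass $\smp(J)$ and $\delta(J)$ entirely, computing $d(J)=m+s-1$ and $d(J^{(2)})=m(s+1)$ by direct enumeration of minimal generators, bounding $d(J^{(n)})$ above by $\max_{a+2b=n}\bigl(a\,d(J)+b\,d(J^{(2)})\bigr)$, and certifying sharpness with the explicit monomials $M^{n/2}$ and $u_iM^{(n-1)/2}$ (their minimality is exactly the tight-edge criterion of Remark \ref{rem_GEN}, and your persistence-under-powers observation is Lemma \ref{lem_raising_power_mingens}). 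Your route is shorter and self-contained for this particular family; the paper's costs more but yields a formula for $d(J(G)^{(n)})$ valid for arbitrary cover ideals. The one step you leave informal --- that every minimal generator of $J^{(2)}$ has degree at most $m(s+1)$ --- does close as you set it up: at most one $K_m$-coordinate can vanish; if one does, minimality forces the generator $u_i^2$ of degree $2(m+s-1)$; otherwise minimality forces each $K_m$-coordinate $\alpha_i$ into $\{1,2\}$ and each pendant exponent at $i$ to equal $\max(0,2-\alpha_i)$, so the total degree $\sum_i\bigl(\alpha_i+s\,\max(0,2-\alpha_i)\bigr)$ is maximized at $\alpha_i\equiv 1$, giving $m(s+1)$.
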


\begin{figure}[ht]
\centering
 \includegraphics[scale=0.7]{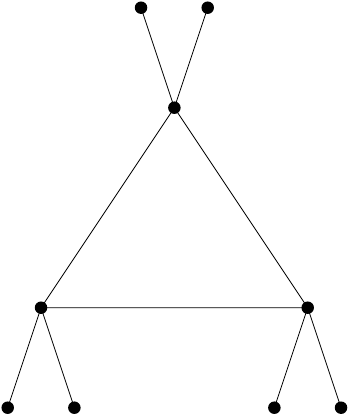}
\caption{The graph $\cg(K_3,2)$}
\label{fig_corona}
\end{figure}

Let us summarize the structure of this article. In Section \ref{sect_prelim}, we recall some necessary background. In Section \ref{sect_limit}, we prove that for any monomial ideal $I$, the limits $\lim\limits_{n\to \infty} \reg I^{(n)}/n $ and $\lim\limits_{n\to \infty} \omega(I^{(n)})/n$ exist and equal to each other. We identify them in terms of the afore-mentioned polyhedron associated to $I$. In Section \ref{sect_coverideals}, we describe structural properties of the symbolic powers of a cover ideal $J(G)$, and compute the function $\omega(J(G)^{(n)})$ in terms of the graph $G$ in certain situations. We are able to show that $\delta(J(G))=\omega(J(G))$ for graphs which are either bipartite, unmixed, or claw-free (Theorem \ref{thm_delta_JG_equal_taumaxG}). In Section \ref{sect_Koszulproperty}, we first prove the Koszulness criterion of Proposition \ref{prop_Koszul_criterion}. The main result of this section is the Koszul property of the symbolic powers for certain class of cover ideals, stated in Theorem \ref{thm_Koszul_symbolicpower_coverideals}. Combining this with results in Section \ref{sect_coverideals}, Theorem \ref{thm_main_non-linear} is deduced at the end of this section.


\section{Preliminaries}
\label{sect_prelim}
 
For standard terminology and results in commutative algebra, we refer to the book of Eisenbud \cite{Eis}. Good references for algebraic aspects of monomial ideals and simplicial complexes are the books of Herzog and Hibi \cite{HH2}, Miller and Sturmfels \cite{MS}, and Villarreal \cite{Vi}.
 
\subsection{Regularity}
Let $R$ be a standard graded algebra over a field $k$. Let $M$ be a finitely generated graded nonzero $R$-module.  Let
$$
F: \cdots \longrightarrow F_p \longrightarrow F_{p-1}\longrightarrow\cdots \longrightarrow F_1\longrightarrow F_0\longrightarrow 0
$$
be the minimal graded free resolution of $M$ over $R$. For each $i\ge 0$, $j\in \Z$, denote $\beta^R_i(M)=\rank F_i=\dim_k \Tor^R_i(k,M)$ and $\beta^R_{i,j}(M)=\dim_k \Tor^R_i(k,M)_j$. We usually omit the superscript $R$ and write simply $\beta_i(M)$ and $\beta_{i,j}(M)$ whenever this is possible. Let
$$t_i (M) = \sup\{j \mid \beta_{i,j}(M) \ne 0\}$$
where, by convention, $t_i (M) = -\infty$ if $F_i = 0$. The \emph{Castelnuovo–Mumford regularity} of $M$ measures the growth of the generating degrees of the $F_i$, $i\ge 0$. Concretely, it is defined by
$$\reg_R(M) =  \sup\{t_i(M) -i \mid  i \geqslant 0\}.$$

In the remaining of this paper, we denote by $\omega(M)$ the number $t_0(M)$. Hence $\omega(M)$ is the maximal degree of a minimal homogeneous generator of $M$. The definition of the regularity implies $$\omega(M) \leqslant \reg_R(M).$$
If $M$ is generated by elements of the same degree $d$, and $\reg_R M=d$, we say that $M$ has a \emph{linear resolution} over $R$. We also say $M$ has a $d$-linear resolution in this case.

If $R$ is a standard graded polynomial ring over $k$, it is customary to denote $\reg_R M$ simply by $\reg M$.

\subsection{Linearity defect, Koszul modules, Betti splittings}
We use the notion of linearity defect, formally introduced by Herzog and Iyengar \cite{HIy}. Let $R$ be a standard graded $k$-algebra, and $M$ a finitely generated graded $R$-module. The linearity defect of $M$ over $R$, denoted by $\lind_R M$, is defined via certain filtration of the minimal graded free resolution of $M$. For details of this construction, we refer to \cite[Section 1]{HIy}. We say $M$ is called a \emph{Koszul module} if $\lind_R M=0$. Koszul modules are those modules with a linear resolution in the terminology of \c{S}ega \cite{Se}. We say that $R$ is a \emph{Koszul algebra} if $\reg_R k=0$. As a matter of fact, $R$ is a Koszul algebra if and only if $k$ is a Koszul $R$-module \cite[Remark 1.10]{HIy}.

For each $d\in \Z$, denote by $M_{\langle d \rangle}$ the submodule of $M$ generated by homogeneous elements of degree $d$. Following Herzog and Hibi \cite{HH}, $M$ is called \emph{componentwise linear} if for all $d\in \Z$, $M_{\langle d \rangle}$ has a $d$-linear resolution. By results of R\"omer \cite[Theorem 3.2.8]{Ro} and Yanagawa \cite[Proposition 4.9]{Yan}, if $R$ is a Koszul algebra, then $M$ is Koszul if and only if $M$ is componentwise linear.  

Because of the last result and for unity of treatment, we use the terms \emph{Koszul} modules throughout, instead of \emph{componentwise linear} modules.

The following result is folklore; see for example \cite[Proposition 3.4]{AR}.
\begin{lem}
\label{lem_Koszul_gendeg}
Let $R$ be a standard graded $k$-algebra, and $M$ be a Koszul $R$-module. Then $\reg_R M=\omega(M)$.
\end{lem}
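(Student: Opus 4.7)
Since $d(M) \leqslant \reg_R M$ is already noted as immediate from the definition of regularity, the task reduces to proving $\reg_R M \leqslant d(M)$. My plan is to invoke the R\"omer--Yanagawa characterisation recalled just before the lemma: when $R$ is Koszul (in particular, when $R$ is a polynomial ring, which is the main setting of the paper), the Koszul assumption on $M$ is equivalent to $M$ being componentwise linear, i.e., $M_{\langle d \rangle}$ carries a $d$-linear resolution for every $d \in \Z$.

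Granting this, I would argue by induction on $s$, where $d_0 < d_1 < \cdots < d_s = d(M)$ are the distinct degrees among the minimal generators of $M$. When $s = 0$, the module $M = M_{\langle d_0 \rangle}$ has a $d_0$-linear resolution by hypothesis, so $\reg_R M = d_0 = d(M)$. For $s \geqslant 1$, set $N = M_{\langle d_0 \rangle}$; componentwise linearity gives $\reg_R N = d_0$. A minimal generator of $M$ of degree $d_i > d_0$ cannot lie in $N$ (otherwise it would belong to $\mm M$, contradicting minimality), so $M/N$ has minimal generators exactly in degrees $d_1, \ldots, d_s$ and $d(M/N) = d(M)$. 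If I can verify that $M/N$ is again componentwise linear, then by induction $\reg_R(M/N) = d(M)$, and the standard regularity bound applied to the short exact sequence
$$0 \longrightarrow N \longrightarrow M \longrightarrow M/N \longrightarrow 0$$
yields $\reg_R M \leqslant \max\{\reg_R N,\, \reg_R(M/N)\} = d(M)$, closing the induction.

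The main obstacle is this preservation step, i.e., that $M/N$ stays componentwise linear. I plan to check it degree by degree via $(M/N)_{\langle d \rangle} \cong M_{\langle d \rangle}/(M_{\langle d \rangle} \cap N)$: since $N$ is generated in the smallest degree $d_0$, the intersection $M_{\langle d \rangle} \cap N$ is tightly controlled by the linear strands of $M_{\langle d \rangle}$, which should let a $d$-linear resolution of the quotient be lifted from the known $d$-linear resolution of $M_{\langle d \rangle}$. This is essentially a standard point in the Herzog--Hibi theory, but verifying it cleanly is the technical heart of the argument. If the bookkeeping becomes unwieldy, I would bypass the induction and argue directly from the definition of linearity defect: $\lind_R M = 0$ forces the linear part of the minimal graded free resolution of $M$ to be acyclic, and a strand-by-strand inspection then yields $t_i(M) \leqslant i + d(M)$ for every $i$, giving $\reg_R M \leqslant d(M)$ in one stroke and without any Koszulness hypothesis on $R$.
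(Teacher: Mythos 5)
The paper does not actually prove this lemma: it is stated as folklore with a pointer to \cite[Proposition 3.4]{AR}, so there is no internal proof to match. Measured against what a correct proof requires, your \emph{primary} route has two genuine problems. First, it invokes the R\"omer--Yanagawa equivalence ``Koszul module $\Leftrightarrow$ componentwise linear'', which is only available when $R$ itself is a Koszul algebra; the lemma is stated for an arbitrary standard graded $k$-algebra, so you would be proving a weaker statement. Second, the step you yourself flag as the technical heart --- that $M/M_{\langle d_0\rangle}$ is again componentwise linear --- is left unverified, and it is not a formal consequence of the definition; controlling $M_{\langle d\rangle}\cap M_{\langle d_0\rangle}$ is exactly the kind of statement that needs the Herzog--Hibi machinery you are trying to reprove. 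As written, the induction does not close.

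Your fallback, however, is the right proof and is essentially what the cited reference does; it needs no hypothesis on $R$ and no induction. Concretely: let $F$ be the minimal graded free resolution of $M$ and suppose $t_i(M)>t_{i-1}(M)+1$ for some $i\geqslant 1$. A basis element $e$ of $F_i$ of top degree $d=t_i(M)$ then satisfies $\partial(e)\in\mm^2F_{i-1}$, since every coefficient of $\partial(e)$ has degree $d-\deg f_\ell\geqslant d-t_{i-1}(M)\geqslant 2$; hence $e$ is a cycle of the linear part $\lind$-complex $\mathrm{lin}^R(F)$. But the image of the linearized differential from homological degree $i+1$ lies in $\mm\cdot\mathrm{lin}^R(F)_i$, while $e$ is a free basis element, so $e$ is not a boundary and $H_i(\mathrm{lin}^R(F))\neq 0$, contradicting $\lind_RM=0$. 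Therefore $t_i(M)\leqslant t_0(M)+i$ for all $i$, i.e.\ $\reg_RM\leqslant d(M)$, and the reverse inequality is noted in the paper. I would drop the componentwise-linearity induction entirely and write up this direct argument.
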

We also recall the following base change result for the linearity defect.
\begin{lem}[Nguyen and Vu {\cite[Corollary 3.2]{NgV2}}]
\label{lem_basechange_ld}
Let $R\to S$ be a flat extension of standard graded $k$-algebras. Let $I$ be a homogeneous ideal of $R$. Then 
\[
\lind_R I=\lind_S IS.
\] 
\end{lem}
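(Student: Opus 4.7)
The plan is to verify that both the construction of the minimal graded free resolution and the formation of its linear part are compatible with the flat base change $R\to S$; once this is established, the equality of linearity defects follows essentially formally.

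First, I would take $F_\bullet$ to be the minimal graded free resolution of $I$ over $R$. Applying $-\otimes_R S$ to the short exact sequence $0\to I\to R\to R/I\to 0$ and using flatness identifies $I\otimes_R S$ with $IS$ as graded $S$-modules and exhibits $F_\bullet\otimes_R S$ as a graded free resolution of $IS$. I would then check that this new resolution is \emph{minimal}: because both $R$ and $S$ are standard graded over the same base field $k$ and $R\to S$ is a graded ring homomorphism, the graded maximal ideal $\mm_R$ is sent into $\mm_S$, so the matrices of the differentials of $F_\bullet\otimes_R S$, obtained by applying the ring map entry-wise, still have entries in $\mm_S$.

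The key step is to show that the linear part construction commutes with $-\otimes_R S$. By definition the linear part of a minimal graded free resolution is obtained by replacing each matrix entry of each differential by its degree-$1$ homogeneous component; since $R\to S$ is graded, it sends degree-$d$ elements to degree-$d$ elements, so the entry-wise operation of extracting the degree-$1$ part commutes with the ring map. This gives a canonical isomorphism between the linear part of $F_\bullet\otimes_R S$ over $S$ and the linear part of $F_\bullet$ over $R$ base-changed to $S$. Moreover, a flat graded homomorphism of standard graded $k$-algebras is automatically faithfully flat, since $\mm_R S\subseteq \mm_S\neq S$ forces $S/\mm_R S\neq 0$. Thus taking homology commutes with $-\otimes_R S$ and detects non-vanishing, so the $i$-th homology of the linear part of $F_\bullet$ vanishes over $R$ if and only if the $i$-th homology of the linear part of $F_\bullet\otimes_R S$ vanishes over $S$. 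Taking the supremum over $i$ yields $\lind_R I=\lind_S IS$.

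The only step that requires any care is the compatibility between taking linear parts and base change, and even that amounts to a formal bookkeeping using the fact that $R\to S$ is graded. The preservation of minimality and the automatic faithful flatness are standard, so the argument is essentially a matter of assembling these observations.
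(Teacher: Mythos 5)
Your argument is correct and is the standard one: the paper does not prove this lemma but cites it from Nguyen--Vu \cite[Corollary 3.2]{NgV2}, whose proof proceeds exactly as you describe (flat base change preserves minimal graded free resolutions and commutes with taking linear parts, and vanishing of homology descends). The only point worth phrasing more carefully is the faithful-flatness step: what you actually need, and what graded Nakayama gives you since the homology modules of the linear part are finitely generated and graded, is that $N\otimes_R S=0$ forces $N=0$ for such $N$; the blanket claim that a flat graded map of standard graded $k$-algebras is faithfully flat is stronger than necessary here.
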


Let $(R,\mm)$ be a noetherian local ring (or a standard graded $k$-algebra) and $P,I,J\neq (0)$ be proper (homogeneous) ideals of $R$ such that $P=I+J$. 
\begin{defn}
The decomposition of $P$ as $I+J$ is called a {\it Betti splitting} if for all $i\ge 0$, the following equality of Betti numbers holds: $\beta_i(P)=\beta_i(I)+\beta_i(J)+\beta_{i-1}(I\cap J)$.
\end{defn}
We have the following reformulations of Betti splittings.
\begin{lem}[{\cite[Lemma 3.5]{NgV2}}]
\label{lem_criterion_Bettisplit}
The following are equivalent:
\begin{enumerate}[\quad \rm(i)]
\item The decomposition $P=I+J$ is a Betti splitting;
\item The natural morphisms $\Tor^R(k,I\cap J) \to \Tor^R(k,I)$ and $\Tor^R(k,I\cap J) \to \Tor^R(k,J)$ are both zero;
\item The mapping cone construction for the map $I\cap J \to I\oplus J$ yields a minimal free resolution of $P$.
\end{enumerate}
\end{lem}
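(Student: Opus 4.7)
The plan is to derive all three equivalences from the short exact sequence
$$0 \to I \cap J \xrightarrow{\iota} I \oplus J \xrightarrow{\sigma} P \to 0,$$
where $\iota(z)=(z,-z)$ and $\sigma(a,b)=a+b$. Applying $-\otimes_R k$ produces the long exact Tor sequence
$$\cdots \to \Tor_i^R(k,I\cap J) \xrightarrow{\iota_i} \Tor_i^R(k,I)\oplus \Tor_i^R(k,J) \to \Tor_i^R(k,P) \xrightarrow{\partial_i} \Tor_{i-1}^R(k,I\cap J) \to \cdots.$$
The map $\iota_i$ decomposes as $(\alpha_i,-\beta_i)$, where $\alpha_i,\beta_i$ are induced by the inclusions $I\cap J\hookrightarrow I$ and $I\cap J\hookrightarrow J$; so condition (ii) is precisely the statement that every $\iota_i$ vanishes (the signs are irrelevant for vanishing).

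For (i)$\Leftrightarrow$(ii), a $k$-dimension count along the long exact sequence always yields
$$\beta_i(P) = \beta_i(I)+\beta_i(J)+\beta_{i-1}(I\cap J) - \dim_k \mathrm{im}\,\iota_i - \dim_k \mathrm{im}\,\iota_{i-1}.$$
Hence the Betti-splitting identity for all $i$ is equivalent to $\mathrm{im}\,\iota_i=0$ for all $i$, which is condition (ii). Conversely, if all $\iota_i$ vanish, the long exact sequence breaks into short exact sequences $0\to \Tor_i(k,I)\oplus\Tor_i(k,J)\to\Tor_i(k,P)\to\Tor_{i-1}(k,I\cap J)\to 0$, yielding the required equality of Betti numbers.

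For (ii)$\Leftrightarrow$(iii), choose minimal graded free resolutions $F_\bullet\to I\cap J$ and $G_\bullet\to I\oplus J$ and lift $\iota$ to a chain map $\phi\colon F_\bullet\to G_\bullet$. The mapping cone $\mathrm{Cone}(\phi)$ is a graded free resolution of $\mathrm{coker}(\iota)=P$ whose $i$-th term has rank $\beta_i(I)+\beta_i(J)+\beta_{i-1}(I\cap J)$. This resolution is minimal if and only if every entry of its differential lies in $\mm$, and since $F_\bullet$ and $G_\bullet$ are already minimal, this amounts to $\phi(F_\bullet)\subseteq \mm G_\bullet$, i.e. $\phi\otimes_R k=0$. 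But $\phi\otimes_R k$ is precisely $\iota_\bullet$, so its vanishing is exactly condition (ii).

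There is no serious obstacle: the argument is routine long-exact-sequence and mapping-cone bookkeeping. The only points requiring attention are the sign convention in $\iota$ and the observation that vanishing of $\iota_i$ is equivalent to the simultaneous vanishing of its two components $\alpha_i$ and $\beta_i$ into $\Tor_i(k,I)$ and $\Tor_i(k,J)$ respectively, as asserted in (ii).
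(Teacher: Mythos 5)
Your proof is correct. The paper itself gives no argument for this lemma --- it simply cites \cite[Lemma 3.5]{NgV2} --- but your derivation via the short exact sequence $0\to I\cap J\to I\oplus J\to P\to 0$, the resulting long exact sequence in $\Tor$, and the minimality criterion for the mapping cone is precisely the standard proof found in the literature on Betti splittings (going back to Francisco--H\`a--Van Tuyl), so there is nothing to add.
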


\subsection{Symbolic powers of monomial ideals}

Let $R=k[x_1,\ldots,x_r]$ be a standard graded polynomial ring, and $I$ a monomial ideal of $R$. Let $\Gcc(I)$ denotes the set of minimal monomial generators of $I$. In the present paper, when talking about minimal generators of a monomial ideal we mean \emph{minimal monomial generators} of it. Let
$$I = Q_1\cap \cdots\cap Q_s\cap Q_{s+1}\cap \cdots \cap Q_t$$
be a minimal primary decomposition of $I$, where $Q_i$ is a primary monomial ideal for $i=1,\ldots,t$, and $P_j=\sqrt{Q_j}$ is a minimal prime of $I$ if and only if $1\le j \le s$. (Hence $Q_{s+1},\ldots,Q_t$ are embedded primary components.) For each $i=1,\ldots, s$, the monomial ideal $Q_j$ is obtained from minimal generators of $I$ by setting $x_i = 1$ for all $i$ for which $x_i\notin P_j$, thus
\begin{equation}
\label{deg-bound} \omega(Q_j) \leqslant \omega(I), \ \text{ for } j = 1,\ldots,s.
\end{equation}

In the case of monomial ideals, we have a simple formula for the symbolic powers in terms of the minimal primary components. It is immediate from the definition of symbolic power, and the fact that monomial primary ideals must be generated by powers of certain variables and monomials involving only of those variables.
\begin{fact}
\label{fact_symbolic_power_monomial}
With notation as above, for all $n\ge 1$, there is an equality
$$I^{(n)}=Q_1^n \cap Q_2^n \cap \cdots \cap Q_s^n.$$
\end{fact}
A function $f\colon \N\to \N\cup\{-\infty\}$ is called \emph{quasi-linear} if there exist a positive integer $N$ and rational numbers $a_i\in\Q$ and $b_i\in \Q\cup\{-\infty\}$, for $i=0,\ldots,N-1$, such that 
$$f(n)=a_in+b_i, \text{ for all } n\in\N \text{ with } n\equiv i \pmod{N}.$$
In this case, the smallest such number $N$ is called the period of $f$. 

Assume that $f$ is not identically $-\infty$. Then $\lim\limits_{n\to\infty} \dfrac{f(n)}{n}$ exists if and only if  $a_0=\cdots=a_{N-1}$. In this case, we say that $f$ has a constant leading coefficient.

\begin{lem}
With notation as above, for every $i\geqslant 0$, $t_i(I^{(n)})$ is quasi-linear in $n$ for $n\gg 0$. In particular, $\omega(I^{(n)})$ and $\reg(I^{(n)})$ are quasi-linear in $n$ for $n\gg 0$.
\end{lem}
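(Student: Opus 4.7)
The plan is to realize $I^{(n)}$ as the $n$-th graded component of a finitely generated $\N$-graded $R$-algebra, and then invoke a Cutkosky--Herzog--Trung style asymptotic linearity result.

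First I would verify that the symbolic Rees algebra $\Rcc_s(I)=\bigoplus_{n\ge 0}I^{(n)}t^n$ is a finitely generated $R$-subalgebra of $R[t]$. By Lemma~\ref{lem_symbolic_power_monomial}, it equals $\bigcap_{j=1}^s R[Q_jt]$, an intersection of finitely many monomial Rees algebras. For monomial ideals this intersection is known to be finitely generated; the squarefree case is \cite[Theorem~3.2]{HHT} and the general monomial case is covered in \cite{HoaTr}. Note however that $\Rcc_s(I)$ is in general not standard graded over $R$, since its algebra generators typically carry different $t$-degrees.

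The second step is a Veronese reduction. Choose $N\ge 1$ so that the Veronese subalgebra $\Rcc_s(I)^{(N)}=\bigoplus_{n\ge 0}I^{(Nn)}t^{Nn}$ is standard graded over $R$; such an $N$ exists for every finitely generated $\N$-graded algebra. For each $j\in\{0,\ldots,N-1\}$, the shifted piece $M^{(j)}=\bigoplus_{n\ge 0}I^{(Nn+j)}$ is a finitely generated graded module over $\Rcc_s(I)^{(N)}$. Applying the standard-graded Cutkosky--Herzog--Trung theorem \cite{CHT,K} at the level of individual $\Tor$-degrees (the refinement from regularity to $t_i$ is routine via the bigraded free resolution of $M^{(j)}$ over $\Rcc_s(I)^{(N)}$), one obtains that for every fixed $i\ge 0$ and $j$ the function $n\mapsto t_i^R(I^{(Nn+j)})$ is eventually linear in $n$. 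Re-indexing over residue classes modulo $N$ yields quasi-linearity of $n\mapsto t_i(I^{(n)})$ with period dividing $N$.

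The consequences for $d$ and $\reg$ then follow immediately. The case $i=0$ gives the quasi-linearity of $d(I^{(n)})=t_0(I^{(n)})$. For the regularity, since $I^{(n)}$ is a nonzero ideal of $R=k[x_1,\ldots,x_r]$ we have $\pd_R I^{(n)}\le r-1$, whence
$$
\reg I^{(n)}=\max_{0\le i\le r-1}\bigl(t_i(I^{(n)})-i\bigr).
$$
A maximum of finitely many eventually quasi-linear functions is still eventually quasi-linear: after passing to a common period, on each residue class the max is an eventual maximum of finitely many linear functions, which is itself eventually linear. The main subtlety will be the non-standard grading on $\Rcc_s(I)$, which is precisely what forces the Veronese reduction in the second step and explains why the conclusion here is quasi-linear rather than linear (as it is for ordinary powers).
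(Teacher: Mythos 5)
Your proposal is correct and follows essentially the same route as the paper: the paper's proof consists precisely of citing the finite generation of the symbolic Rees algebra $\Rcc_s(I)$ (via \cite[Theorem 3.2]{HHT}) and then invoking the argument of \cite[Theorem 4.3]{CHT}, whose content is exactly the Veronese reduction and residue-class analysis you spell out. Your write-up just makes explicit the details the paper leaves to that citation (including the passage from $\reg$ to individual $t_i$ and the max over $i\le r-1$), so there is nothing to correct.
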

\begin{proof} By \cite[Theorem 3.2]{HHT}, the symbolic Rees ring $\mathcal R_s(I) = \bigoplus_{n=0}^{\infty} I^{(n)}$ is finitely generated. By the very same way as the proof of \cite[Theorem 4.3]{CHT}, we obtain  $t_i(I^{(n)})$ is quasi-linear in $n$ for $n\gg 0$.
\end{proof}

If $I$ is a monomial ideal of $R$, the minimal graded free resolution of $I$ is $\Z^r$-graded. For each $\alb\in \Z^r$, we denote by $\beta_{i,\alb}(I)$ the number $\dim_k \Tor^R_i(k,I)_{\alb}$. Clearly $\beta_{i,\alb}(I)=0$ if $\alb \notin \N^r$.

When we talk about a monomial $x^{\alb}$ of $R$, we always mean $\alb=(\alpha_1,\ldots,\alpha_r)\in \N^r$ and $x^{\alb}=x_1^{\alpha_1}\cdots x_r^{\alpha_r}$. A vector $\alb = (\alpha_1,\ldots,\alpha_r)\in \N^r$ is called squarefree if for all $i=1,\ldots,r$, $\alpha_i$ is either 0 or 1. Let $\e_1,\ldots,\e_r$ be the canonical basis of the free $\Z$-module $\Z^r$.
For any $\alb = (\alpha_1,\ldots,\alpha_r)\in \N^r$ the {\it upper Koszul simplicial complex} associated with $I$ at degree $\alb$ is defined by
$$K^{\alb}(I) = \{\text{squarefree vector } \tau \mid x^{\alb-\tau} \in I\},$$
where we use the convention $\alb - \tau = \alb - \sum_{i\in\tau} \e_i$.  The multigraded Betti numbers of $I$ can be computed as follows.

\begin{lem}\label{L00} {\rm (\cite[Theorem 1.34]{MS})} For all $i\geqslant 0$ and all $\alpha \in \N^r$, there is an equality $\beta_{i,\alb}(I) = \dim_k \widetilde{H}_{i-1}(K^{\alb}(I);k)$.
\end{lem}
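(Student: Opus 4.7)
The plan is to compute $\Tor^R_i(k,I)$ via the Koszul complex $K_\bullet = K_\bullet(x_1,\ldots,x_r;R)$, which is a minimal $\Z^r$-graded free resolution of $k$ over $R$. Then $\beta_{i,\alb}(I) = \dim_k H_i(K_\bullet \otimes_R I)_\alb$, so the task reduces to identifying the multidegree-$\alb$ strand of $K_\bullet \otimes_R I$ with the (shifted) augmented chain complex of $K^{\alb}(I)$.

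To carry this out, I would unwind the $\Z^r$-graded structure. The module $K_i$ has an $R$-basis $\{e_\sigma : \sigma \subseteq \{1,\ldots,r\},\ |\sigma| = i\}$, where each $e_\sigma$ carries multidegree $\tau_\sigma := \sum_{j \in \sigma} \e_j$. Consequently, $(K_i \otimes_R I)_\alb$ has a $k$-basis indexed by those $\sigma$ with $|\sigma|=i$ and $x^{\alb - \tau_\sigma} \in I$, i.e.\ precisely the $(i-1)$-dimensional faces of $K^{\alb}(I)$. Next, I would verify that the Koszul differential, which sends $x^{\alb - \tau_\sigma} e_\sigma \mapsto \sum_{j \in \sigma} \pm\, x^{\alb - \tau_{\sigma \setminus \{j\}}} e_{\sigma\setminus\{j\}}$, matches the usual simplicial boundary map on $K^{\alb}(I)$ up to signs. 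Including the empty face $\sigma = \emptyset$ (present precisely when $x^\alb \in I$) as the $(-1)$-dimensional face accounts for the augmentation. Taking homology yields $\beta_{i,\alb}(I) = \dim_k \widetilde H_{i-1}(K^{\alb}(I); k)$.

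The only small point that needs care—rather than being a serious obstacle—is to check that $K^{\alb}(I)$ is indeed a simplicial complex, which follows because $x^{\alb-\tau_{\sigma'}}$ is a monomial multiple of $x^{\alb-\tau_\sigma}$ whenever $\sigma' \subseteq \sigma$, so membership in $I$ is inherited by subfaces. Everything else is a formal identification of two chain complexes, so no deeper input beyond the Koszul resolution is required.
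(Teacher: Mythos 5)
Your argument is correct and complete: the paper gives no proof of this lemma, citing \cite[Theorem 1.34]{MS}, and your computation — taking the Koszul complex as the minimal $\Z^r$-graded free resolution of $k$, restricting $K_\bullet\otimes_R I$ to multidegree $\alb$, and identifying that strand with the shifted augmented chain complex of $K^{\alb}(I)$ — is precisely the standard proof given in that reference. The details you flag (closure of $K^{\alb}(I)$ under subfaces, the empty face accounting for the augmentation, and implicitly the vanishing of both sides when $x^{\alb}\notin I$) are exactly the points that need checking, and they all go through.
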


Let $\overline{I}$ be the integral closure of the monomial ideal $I$. To describe $\overline{I}$ geometrically, for a subset $A$ of $R$, denote 
$$
E(A) = \{\alb\mid \alb\in \N^r \text{ and } x^{\alb} \in A\}.
$$
The Newton polyhedron of $I$ is the convex polyhedron in $\R^r$ defined by $NP(I) = \conv\{E(I)\}$. Then $\overline{I}$ is a monomial ideal  determined by (see \cite[Exercises 4.22 and 4.23]{Eis}):
\begin{equation}\label{Int-meaning}
E(\overline{I}) = NP(I) \cap \N^r.
\end{equation}

For each $n\geqslant 1$, let $$\smp_n(I) = \bigcap_{i=1}^s NP(Q_i^n),$$
and $$J_n(I) = \overline{Q_1^n}\cap \cdots\cap \overline{Q_s^n}.$$
Then from Equation (\ref{Int-meaning}) we have $E(J_n(I)) = \smp_n(I) \cap \N^r$.

We will denote $\smp_1(I)$ simply by $\smp(I)$. This is the same as the symbolic polyhedron introduced in \cite[Definition 5.3]{CEHH}, if $I$ has no embedded primes. But in general, the two notions are different, since in contrast to our definition, the definition of symbolic power in \cite{CEHH} involves all the associated primes.

For subsets $X$ and $Y$ of $\R^r$ and a positive integer $n$, we denote
\begin{align*}
nX  &= \{ny\mid y\in X\},\\
X+Y &= \{x+y: x\in X, y\in Y\}.
\end{align*}
Denote by $\R_{+}$ the set of non-negative real numbers. The following lemma gives us the structure of the convex polyhedron $\smp_n(I)$.

\begin{lem}\label{NP-powers} Let $\{\v_1,\ldots,\v_d\}$ be the set of vertices of $\smp(I)$. Then
$$\smp_n(I) =  n\smp(I)= n \conv \{\v_1,\ldots,\v_d\} + \R_+^r.$$
\end{lem}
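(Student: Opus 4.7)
The plan is to reduce the claim to the familiar fact that the Newton polyhedron of $Q^n$ is $n$ times the Newton polyhedron of $Q$, and then to apply the standard Minkowski decomposition of a polyhedron into its vertices plus its recession cone.

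First I would verify the ``scaling'' identity $NP(Q^n) = n \cdot NP(Q)$ for any monomial ideal $Q$. If $Q = (x^{\alpha_1},\ldots,x^{\alpha_k})$, then the minimal monomial generators of $Q^n$ have exponents $\alpha_{i_1} + \cdots + \alpha_{i_n}$, which are $n$ times a convex combination of $\alpha_1,\ldots,\alpha_k$, so $E(Q^n) \subseteq n \cdot NP(Q)$ and hence $NP(Q^n) \subseteq n \cdot NP(Q)$. The reverse inclusion follows from the well-known description $NP(Q) = \conv\{\alpha_1,\ldots,\alpha_k\} + \R_+^r$: scaling by $n$ gives $n\cdot NP(Q) = \conv\{n\alpha_1,\ldots,n\alpha_k\} + \R_+^r$, and each $n\alpha_j \in E(Q^n)$, so by convexity and the fact that $NP(Q^n)$ contains $\R_+^r$ as its recession cone, we get $n \cdot NP(Q) \subseteq NP(Q^n)$.

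Next, because scalar multiplication by $n>0$ on $\R^r$ is a bijection, it commutes with arbitrary intersections, so
\[
\smp_n(I) = \bigcap_{i=1}^s NP(Q_i^n) = \bigcap_{i=1}^s n\cdot NP(Q_i) = n \cdot \bigcap_{i=1}^s NP(Q_i) = n\cdot \smp(I).
\]
This gives the first equality $\smp_n(I) = n\smp(I)$.

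For the second equality, I would use the Minkowski--Weyl theorem. Each $NP(Q_i)$ is a convex polyhedron with recession cone $\R_+^r$, so $\smp(I)$, being a finite intersection of such polyhedra, is again a convex polyhedron whose recession cone is $\bigcap_i \R_+^r = \R_+^r$. Consequently $\smp(I)$ admits the decomposition $\smp(I) = \conv\{\v_1,\ldots,\v_d\} + \R_+^r$, where $\v_1,\ldots,\v_d$ are its vertices. Multiplying by $n$ and using $n\R_+^r = \R_+^r$ yields
\[
n\smp(I) = n\conv\{\v_1,\ldots,\v_d\} + \R_+^r,
\]
which is the desired identity. The only mildly delicate step is the identity $NP(Q^n)=n\cdot NP(Q)$ in Step 1; everything else is formal manipulation with polyhedra.
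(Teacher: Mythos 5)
Your proof is correct and follows essentially the same route as the paper: establish $NP(Q^n)=n\,NP(Q)$, pass the scaling through the intersection, and then decompose the pointed polyhedron $\smp(I)$ as the convex hull of its vertices plus its recession cone $\R_+^r$. The only differences are cosmetic --- you prove the scaling identity directly where the paper cites Reid--Roberts--Vitulli, and you invoke Minkowski--Weyl where the paper cites a formula from Schrijver.
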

\begin{proof} For each $i=1,\ldots,s$, we have $NP(Q_j^n) = n NP(Q_j)$ by \cite[Lemma 2.5]{RRV}. It follows that $\smp_n(I) =n\smp(I)$. 

For $\v\in \smp(I)$ and $\u\in\R^r_+$, one has $\v+\u \in \smp(I)$ again by \cite[Lemma 2.5]{RRV}. Combining this with \cite[Formula (28), Page 106]{S} we have 
$$\smp(I) = \conv\{\v_1,\ldots,\v_d\} + \R_{+}^r.$$ 
Thus, $\smp_n(I) = n \smp(I) = n \conv\{\v_1,\ldots,\v_d\} + \R_{+}^r$, as required.
\end{proof}

The following result was proved in \cite[Lemma 6]{Tr}.
\begin{lem}
\label{lem_Newton_polyh}
Let $Q$ be a monomial ideal of $R$. Then the Newton polyhedron $NP(Q)$ is the set of solutions of a system of
inequalities of the form
$$
\{\x \in \R^r \mid \langle \a_j, \x\rangle \ge b_j, j = 1,\ldots, q\},
$$
such that the following conditions are simultaneously satisfied:
\begin{enumerate}[\quad \rm(i)]
 \item Each hyperplane with the equation $\langle \a_j , \x\rangle = b_j$ defines a facet of $NP(Q)$, which
contains $s_j$ affinely independent points of $E(\Gcc(Q))$ and is parallel to $r -s_j$ vectors of the
canonical basis. In this case $s_j$ is the number of non-zero coordinates of $\a_j$.
\item $\mathbf{0} \neq \a_j \in \N^r, b_j \in \N$ for all $j = 1,\ldots, q$.
\item If we write $\a_j = (a_{j,1} ,\ldots , a_{j,r})$, then $a_{j,i}\le s_j\omega(Q)^{s_j-1}$ for all $i = 1,\ldots, r$.
\end{enumerate}
\end{lem}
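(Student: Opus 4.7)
The plan is to extract the structure of $NP(Q)$ from the fact that it is the Minkowski sum $\conv(E(G(Q)))+\R_+^r$, which by the Minkowski–Weyl theorem is a full-dimensional convex polyhedron. Consequently it admits a representation as the intersection of finitely many facet-defining half-spaces $\langle \a_j,\x\rangle\ge b_j$, and up to positive scaling the normal vector $\a_j$ of each facet is unique. First I would establish positivity: since $\x\in NP(Q)$ implies $\x+t\e_i\in NP(Q)$ for every $t\ge 0$, evaluating the inequality on such translates forces each coordinate of $\a_j$ to be nonnegative. Clearing denominators and then dividing by the gcd of the entries, one may assume $\a_j\in \N^r$ is primitive; then $b_j=\min\{\langle \a_j,\u\rangle : \u\in E(G(Q))\}$ is a nonnegative integer, giving condition (ii).

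Next I would pin down the facet geometry. Let $S=\{i : a_{j,i}>0\}$ and $T=\{1,\dots,r\}\setminus S$, so $|S|=s_j$. For $i\in T$ one has $\langle\a_j,\e_i\rangle=0$, so the facet $F_j$ is invariant under translation along each $\e_i$, $i\in T$; in particular $F_j$ contains the $r-s_j$ linearly independent directions $\{\e_i : i\in T\}$, which is part (i)'s parallelism claim. Because $F_j$ has affine dimension $r-1$, it must contain at least $s_j-1$ further independent directions not in $\mathrm{span}\{\e_i : i\in T\}$. Standard polytope theory (or the identification $NP(Q)=\conv(E(G(Q)))+\R_+^r$) shows that every bounded face of $NP(Q)$ has vertices lying in $E(G(Q))$; combining this with the lineality in the $\e_i$-directions for $i\in T$, those missing $s_j-1$ independent directions can be realized as differences of vertices of $NP(Q)$ belonging to $F_j\cap E(G(Q))$. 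Thus $F_j$ contains $s_j$ affinely independent points of $E(G(Q))$, completing (i).

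Finally, to prove the size bound in (iii), I would use Cramer's rule. Fix $s_j$ affinely independent points $\u_1,\dots,\u_{s_j}\in F_j\cap E(G(Q))$, and restrict everything to the $S$-coordinates to obtain an $s_j$-dimensional system. The vector $(a_{j,i})_{i\in S}$ lies (up to scalar) in the one-dimensional kernel of the $(s_j-1)\times s_j$ matrix $M$ whose rows are the projections of $\u_k-\u_1$, $k=2,\dots,s_j$. Hence each entry $a_{j,i}$ is, up to sign and division by the gcd, the $(s_j-1)\times(s_j-1)$ minor of $M$ obtained by deleting the $i$-th column. The entries of $M$ are differences of coordinates of minimal generators of $Q$, so they are bounded by $d(Q)$ in absolute value. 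The bound $a_{j,i}\le s_j\,d(Q)^{s_j-1}$ then follows by expanding the minor along a row and applying the elementary Laplace inequality $|\det M'|\le s_j\,d(Q)^{s_j-1}$ for an $(s_j-1)\times (s_j-1)$ matrix with entries bounded by $d(Q)$; dividing by the gcd (which is $\ge 1$) only improves the estimate, giving (iii).

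The main obstacle I anticipate is obtaining the precise constant $s_j$ (rather than a looser factorial bound from Leibniz's formula) in the determinantal estimate of Step 3; I would handle this by exploiting that the rows of $M$ are differences $\u_k-\u_1$ of two vectors with $|\u|_1\le d(Q)$, which controls the rows in $L^1$-norm and admits a cleaner minor estimate than the coordinate-wise bound alone.
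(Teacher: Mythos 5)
The paper gives no proof of this lemma: it is imported verbatim from \cite[Lemma 6]{Tr}, so there is no internal argument to compare yours against. Your outline follows the natural (and presumably the original) route: Minkowski--Weyl applied to $NP(Q)=\conv(E(\Gcc(Q)))+\R_+^r$, nonnegativity and primitivity of the integral facet normals via the recession cone, and a Cramer-type recovery of $\a_j$ from $s_j$ points of $E(\Gcc(Q))$ on the facet. Parts (i) and (ii) are essentially fine; the cleanest justification of (i) is that the facet decomposes as $F_j=F_P+F_C$, where $F_P$ is the face of the polytope $\conv(E(\Gcc(Q)))$ minimizing $\langle\a_j,\cdot\rangle$ and $F_C=\{\x\in\R_+^r : x_i=0 \text{ for } i\in S\}$ is the corresponding face of the recession cone, so $\dim F_P\geqslant (r-1)-(r-s_j)=s_j-1$ and $F_P$ has $s_j$ affinely independent vertices, all lying in $E(\Gcc(Q))$.

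Two steps in your treatment of (iii) do not go through as written. First, for an \emph{arbitrary} choice of $s_j$ affinely independent points $\u_1,\dots,\u_{s_j}$ of $E(\Gcc(Q))$ on the facet, the matrix $M$ of $S$-projected differences need not have rank $s_j-1$: for $Q=(x_1,\,x_2x_3,\,x_2x_4)$ the hyperplane $x_1+x_2=1$ supports a facet with $S=\{1,2\}$, and the generators $x_2x_3$, $x_2x_4$ are affinely independent points on it whose difference projects to $\mathbf{0}$ in $\R^S$; for that choice all maximal minors of $M$ vanish and the Cramer argument collapses. The repair is a dimension count: since $\mathrm{lin}(F_j)=\mathrm{lin}(F_P)+\mathrm{span}\{\e_i : i\notin S\}$ has dimension $r-1$ and contains $\mathrm{span}\{\e_i : i\notin S\}$, the projection of $\mathrm{lin}(F_P)$ to $\R^S$ has dimension exactly $s_j-1$, so one can \emph{choose} vertices of $F_P$ whose projected differences are linearly independent, and the argument must be run with such a choice. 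Second, the determinantal bound that you yourself flag as the main obstacle is genuinely missing: Hadamard or the Leibniz expansion give $(s_j-1)^{(s_j-1)/2}d(Q)^{s_j-1}$ or $(s_j-1)!\,d(Q)^{s_j-1}$, both worse than $s_j\,d(Q)^{s_j-1}$ once $s_j\geqslant 4$. The correct completion of your $\ell^1$ idea is: the maximal minor of $M$ obtained by deleting column $i$ equals, up to sign, the determinant of the $s_j\times s_j$ matrix whose $k$-th row is $(1,\u_k|_{S\setminus\{i\}})$; expanding along the column of ones expresses it as a signed sum of $s_j$ determinants of nonnegative integer matrices whose row sums are at most $|\u_k|_1\leqslant d(Q)$, and for a nonnegative matrix one has $|\det|\leqslant\operatorname{per}\leqslant\prod(\text{row sums})\leqslant d(Q)^{s_j-1}$, giving $a_{j,i}\leqslant s_j\,d(Q)^{s_j-1}$ (division by the gcd of the minors only improves this). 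With these two repairs your outline becomes a complete proof.
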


Using this, we can give information about facets of $\smp_n(I)$, which will be useful to bound from below the maximal generating degree of $I^{(n)}$ by some linear function of $n$.

\begin{lem}
\label{lem_facets} 
The polyhedron $\smp(I)$ is the solutions in $\R^r$ of a system of linear inequalities of the form
$$\{\x\in\R^r \mid \left<\a_j,\x\right> \geqslant b_j, \ j=1,2,\ldots, q\},$$
where for each $j$, the following conditions are fulfilled:
\begin{enumerate}[\quad \rm(i)]
 \item $\mathbf{0} \neq \a_j\in \N^r$, $b_j\in \N$;
 \item $|\a_j| \leqslant r^2\omega(I)^{r-1}$;
\item The equation $\left<\a_j,\x\right> = b_j$ defines a facet of $\smp(I)$.
\end{enumerate}
\end{lem}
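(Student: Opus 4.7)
The plan is to reduce to Lemma \ref{lem_Newton_polyh} applied to each minimal primary component. Since $\smp(I)=NP(Q_1)\cap\cdots\cap NP(Q_s)$ by definition, concatenating the inequality systems produced by Lemma \ref{lem_Newton_polyh} for each $NP(Q_j)$ yields a single finite system of inequalities $\{\langle \a,\x\rangle\ge b\}$ whose solution set is exactly $\smp(I)$. Each such $\a$ already lies in $\N^r\setminus\{\mathbf{0}\}$ and each such $b$ lies in $\N$, so condition (i) is inherited for free.

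For condition (ii), I would use the coordinate bound from Lemma \ref{lem_Newton_polyh}(iii). For an inequality coming from $NP(Q_j)$, if $\a$ has $s$ nonzero coordinates (so $s\le r$), then each of those coordinates is at most $s\cdot d(Q_j)^{s-1}$. Combining this with the elementary observation in \eqref{deg-bound} that $d(Q_j)\le d(I)$, I get
$$
|\a| \;\le\; s\cdot s\cdot d(Q_j)^{s-1} \;\le\; r^2\, d(I)^{r-1},
$$
which is exactly the desired bound. Note this step uses only $s\le r$ and $d(Q_j)\le d(I)$, both of which hold irrespective of which component the inequality came from.

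Finally, for condition (iii) I would pass to a minimal (irredundant) subsystem: drop any inequality that is implied by the others. The resulting system still cuts out $\smp(I)$ and obviously still satisfies (i) and (ii) (since we are only removing constraints, never altering them). Because $\smp(I)$ contains a translate of $\R_+^r$ (each $NP(Q_j)$ does, being of the form $\conv E(Q_j)+\R_+^r$), the polyhedron $\smp(I)$ is full-dimensional in $\R^r$, and standard polyhedral theory then guarantees that the inequalities of any minimal defining system are in bijection with the facets of $\smp(I)$. This gives (iii).

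I do not anticipate a serious obstacle: the only step that requires thought is checking that a facet of $\smp(I)$ really does arise from an inequality supplied by Lemma \ref{lem_Newton_polyh} for some $NP(Q_j)$, and this follows from the general fact that if $F$ is a facet of an intersection $\bigcap_j P_j$ of convex polyhedra, then the supporting hyperplane of $F$ must meet some $P_j$ in an $(r-1)$-dimensional face (hence a facet). The rest is bookkeeping around the coordinate bound.
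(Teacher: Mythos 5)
Your proof is correct and follows essentially the same route as the paper: concatenate the inequality systems supplied by Lemma \ref{lem_Newton_polyh} for the minimal components $Q_1,\ldots,Q_s$ and bound $|\a_j|\leqslant s_j\cdot s_j d(Q_j)^{s_j-1}\leqslant r^2 d(I)^{r-1}$ using the inequality $d(Q_j)\leqslant d(I)$ from \eqref{deg-bound}. You are in fact slightly more careful than the paper, whose proof leaves condition (iii) implicit; your passage to an irredundant subsystem of the full-dimensional polyhedron $\smp(I)$ is the right way to make that step explicit.
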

\begin{proof} Note that $\smp(I)$ is the solution in $\R^r$ of the system of all linear inequalities that arise from those inequalities defining $NP(Q_j)$ where $j=1,\ldots,s$. Now combining Lemma \ref{lem_Newton_polyh} with the fact that $\omega(Q_j) \leqslant \omega(I)$ (Inequality \eqref{deg-bound}), the lemma follows.
\end{proof}

Let $\Delta$ be a simplicial complex on $\{1, \ldots , r\}$. For a subset $F =\{i_1,\ldots,i_j\}$ of $\{1, \ldots , r\}$, set $x^F = x_{i_1}\cdots x_{i_j}$ and $P_F=(x_i: i\notin F)$. Then the \emph{Stanley-Reisner ideal} of $\Delta$ is the squarefree monomial ideal
$$I_{\Delta} = (x^G \mid G \notin  \Delta) \subseteq  R.$$

Let $\mathcal F(\Delta)$ denote the set of all facets of $\Delta$. If $\mathcal F(\Delta) = \{F_1, \ldots, F_m\}$, we write $\Delta = \left <F_1, \ldots, F_m\right >$. Then $I_{\Delta}$ admits the primary decomposition
$$
I_{\Delta} = \bigcap_{F\in \mathcal F(\Delta)} P_F.
$$
Thanks to Fact \ref{fact_symbolic_power_monomial}, for every integer $n \geqslant 1$, the $n$-th symbolic power of $I_{\Delta}$ is given by
$$I_{\Delta}^{(n)} = \bigcap_{F\in \mathcal F(\Delta)} P_F^n.$$

\subsection{Graph theory}
Let $G$ be a finite simple graph. We use the symbols $V(G)$ and $E(G)$ to denote the vertex set and the edge set of $G$, respectively. When there is no confusion, the edge $\{u,v\}$ of $G$ is written simply as $uv$. Two vertices $u$ and $v$ are \emph{adjacent} if $\{u,v\} \in E(G)$.

For a subset $S$ of $V(G)$, we define $$N_G(S) = \{v\in V(G)\setminus S \mid uv\in E(G) \text{ for some } u \in S\}$$ and $N_G[S] = S \cup N_G(S)$. When there is no confusion, we shall omit $G$ and write $N(S)$ and $N[S]$.  If $S$ consists of a single vertex $u$, denote $N_G(u)=N_G(S)$ and $N_G[u]=N_G[S]$.  Define $G[S]$ to be the induced subgraph of $G$ on $S$, and $G \setminus S$ to be the subgraph of $G$ with the vertices in $S$ and their incident edges deleted. 

The \emph{degree} of a vertex $u \in V(G)$, denoted by $\deg_G(u)$, is the number of edges incident to $u$. If $\deg_G(u) = 0$, then $u$ is called an \emph{isolated vertex}; if $\deg_G(u)=1$, then $u$ is a \emph{leaf}. An edge emanating from a leaf is called a {\it pendant}.

A \emph{vertex cover} of $G$ is a subset of $V(G)$ which meets every edge of $G$; a vertex cover is \emph{minimal} if none of its proper subsets is itself a cover. The \emph{cover ideal} of $G$ is defined by
$J(G) := (x^{\tau} \mid \tau \text{ is a minimal vertex cover of } G)$. Note that $J(G)$ has the primary decomposition
$$J(G) = \bigcap_{\{i,j\}\in E(G)} (x_i,x_j).$$

An \emph{independent set} in $G$ is a set of vertices no two of which are adjacent to each other. An independent set in $G$ is \emph{maximal} (with respect to set inclusion) if the set cannot be extended to a larger independent set. The set of all independent sets of $G$, denoted by $\Delta(G)$, is a simplicial complex, called the \emph{independence complex} of $G$. 


\section{Asymptotic maximal generating degree and regularity}
\label{sect_limit}

Let $I$ be a monomial ideal of $R=k[x_1,\ldots,x_r]$ and let
$$I = Q_1\cap \cdots\cap Q_s\cap Q_{s+1}\cap \cdots \cap Q_t$$
be a minimal primary decomposition of $I$, where $Q_1,\ldots,Q_s$ are the components associated to the minimal primes of $I$.  By Fact \ref{fact_symbolic_power_monomial} we have 
$$I^{(n)}=Q_1^n \cap Q_2^n \cap \cdots \cap Q_s^n.$$
Recall that
$$\smp_n(I) = NP(Q_1^n) \cap NP(Q_2^n) \cap \cdots\cap NP(Q_s^n) = n\smp(I),$$
and
$$J_n(I) = \overline{Q_1^n}\cap \overline{Q_2^n}\cap \cdots\cap \overline{Q_s^n}.
$$
For simplicity, we denote $J_n=J_n(I)$ in the sequel. Observe that $x^{\alb} \in J_n$ if and only if $\alb \in \smp_n(I)\cap \N^r$. We note two simple facts.

\begin{rem}\label{GEN-I} Let $J$ be a monomial ideal and  $x^{\alb}\in J$, with $\alb = (\alpha_1,\ldots,\alpha_r)\in \N^r$. Then $x^{\alb}\in \Gcc(J)$ if and only if for every $i$ with $\alpha_i\ge 1$, we have $x^{\alb-\e_i}\notin J$.
\end{rem}

\begin{lem}\label{N0} Let $J$ be a monomial ideal and $x^{\alb}\in J$. For $i=1,\ldots,r$, let $m_i\in \N$ be an integer such that $x^{\alb-m_i\e_i}\notin J$ if $\alpha_i \geqslant m_i$. Then there are integers $0\leqslant n_i \leqslant m_i-1$ such that $x^{\alb -(n_1\e_1+\cdots+n_r\e_r)}\in \Gcc(J)$.
\end{lem}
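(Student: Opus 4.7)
My plan is to realize the desired $n_i$ as the exponent differences $\alpha_i - \gamma_i$ between $\alb$ and a minimal generator $x^\gmb$ of $J$ that divides $x^\alb$. Such a minimal generator always exists: since $J$ is a monomial ideal and $x^\alb \in J$, the monomial $x^\alb$ must be a multiple of some element of $\Gcc(J)$, that is, there is $x^\gmb \in \Gcc(J)$ with $\gmb \leqslant \alb$ componentwise. Setting $n_i := \alpha_i - \gamma_i \in \N$ then gives $x^{\alb - (n_1\e_1 + \cdots + n_r\e_r)} = x^\gmb \in \Gcc(J)$ immediately, so it only remains to verify the upper bound $n_i \leqslant m_i - 1$.

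For the bound I would argue one coordinate at a time. Fix $i$; if $\alpha_i < m_i$, then trivially $n_i \leqslant \alpha_i \leqslant m_i - 1$. If instead $\alpha_i \geqslant m_i$, suppose for contradiction that $n_i \geqslant m_i$. Then $\gamma_i = \alpha_i - n_i \leqslant \alpha_i - m_i$ while $\gamma_j \leqslant \alpha_j$ for every $j \neq i$, so $\gmb \leqslant \alb - m_i \e_i$ componentwise; equivalently, $x^\gmb$ divides $x^{\alb - m_i \e_i}$. Since $x^\gmb \in J$, this forces $x^{\alb - m_i \e_i} \in J$, contradicting the hypothesis on $m_i$. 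Hence $n_i \leqslant m_i - 1$ in all cases, and the proof is complete.

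There is no real obstacle beyond identifying the right object to work with: once we pick any minimal generator of $J$ dividing $x^\alb$, the bounds on the $n_i$ are automatic from the defining property of the $m_i$ via a one-line divisibility check, and no descent procedure or induction on $|\alb|$ is actually needed.
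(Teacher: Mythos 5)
Your proof is correct. It takes a slightly different route from the paper's: the paper chooses the exponents $n_i$ greedily, taking a tuple with $0\leqslant n_i<m_i$ and $x^{\alb-(n_1\e_1+\cdots+n_r\e_r)}\in J$ for which $n_1+\cdots+n_r$ is maximal, and then (implicitly, via Remark \ref{GEN-I} and the hypothesis on the $m_i$) concludes that the resulting monomial is a minimal generator. You instead start from an arbitrary $x^{\gmb}\in\Gcc(J)$ dividing $x^{\alb}$ and verify that the differences $n_i=\alpha_i-\gamma_i$ automatically satisfy $n_i\leqslant m_i-1$, using the same key divisibility observation (if $x^{\gmb}$ divided $x^{\alb-m_i\e_i}$ then the latter would lie in $J$, contradicting the hypothesis). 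Your version dispenses with the extremal choice altogether and in fact proves the slightly stronger statement that \emph{every} minimal generator of $J$ dividing $x^{\alb}$ yields an admissible tuple $(n_1,\ldots,n_r)$; the paper's version keeps the shift $\alb-(n_1\e_1+\cdots+n_r\e_r)$ inside the prescribed box by construction and only needs the hypothesis to rule out the boundary case. Both are complete, and your case analysis ($\alpha_i<m_i$ versus $\alpha_i\geqslant m_i$) correctly handles the conditional form of the hypothesis.
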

\begin{proof} 
Just choose $0\leqslant n_i < m_i$ for $i=1,\ldots,r$ such that
$$x^{\alb -(n_1\e_1+\cdots+n_r\e_r)} \in J$$
and  $n_1+\cdots+n_r$ is as large as possible. 
\end{proof}

The first main result of this paper is

\begin{thm} \label{thm_limit_d} 
There is an equality $\lim_{n\to\infty} \dfrac{\omega(I^{(n)})}{n} = \delta(I)$.
\end{thm}
In fact, setting $\rho=r^2\omega(I)^{r-1}$, we will prove that for all $n\geqslant  1$, the following bounds for $\omega(I^{(n)})$ hold:
\begin{equation}
\label{eq_bound_omega(In)}
 \delta(I)n - r\rho(1+s(r-1)\omega(I)) \leqslant \omega(I^{(n)}) \leqslant \delta(I)n + r +r(r-1)\omega(I).
\end{equation}
This clearly implies the conclusion of Theorem \ref{thm_limit_d}.

For the upper bound, we need the following auxiliary statements. 

\begin{lem} 
\label{lem_fromI(n)toJn}  
Let $x^{\alb}\in I^{(n)}$ be a monomial. Assume that for some $1\le i\le r$, we have $x^{\alpha-\e_i}\notin I^{(n)}$.  Denote $m = (r-1)\omega(I)+1$. If $\alpha_i \geqslant m$, then $x^{\alb -m\e_i}\notin J_n$.
\end{lem}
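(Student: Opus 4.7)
The plan is to pick a minimal prime component $Q_j$ that witnesses $x^{\alb - \e_i} \notin I^{(n)}$, verify that $x_i \in P_j$, and then derive a contradiction from the hypothetical $x^{\alb - m\e_i} \in J_n$ by lifting a Newton polyhedron decomposition of $\alb - m\e_i$ to an honest product of $n$ minimal generators of $Q_j$ dividing $x^{\alb - \e_i}$.

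Since $x^{\alb - \e_i} \notin I^{(n)} = Q_1^n \cap \cdots \cap Q_s^n$, pick $j$ with $x^{\alb - \e_i} \notin Q_j^n$. Because $Q_j$ is supported only on the variables of $P_j$, membership in $Q_j^n$ depends only on $P_j$-exponents; were $x_i \notin P_j$, we would have $x^{\alb - \e_i} \in Q_j^n$ from $x^\alb \in I^{(n)} \subseteq Q_j^n$, a contradiction. Hence $x_i \in P_j$ and $Q_j$ contains a pure power $x_i^{c_i}$ with $c_i \le d(Q_j) \le d(I)$. Suppose for contradiction $x^{\alb - m\e_i} \in J_n \subseteq \overline{Q_j^n}$; then $\alb - m\e_i \in n\, NP(Q_j)$, so
\[
\alb - m\e_i = \sum_g \mu_g \, g + u, \qquad \mu_g \ge 0, \ \sum_g \mu_g = n, \ u \in \R_+^r,
\]
where $g$ runs over the minimal generators of $Q_j$. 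Shifting by $(m-1)\e_i$ yields a decomposition $\alb - \e_i = \sum_g \mu_g \, g + \widetilde u$ with ``$i$-th slack'' $\widetilde u_i \ge m - 1 = (r-1)d(I)$.

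Using this slack together with the theory of basic feasible solutions and Carath\'eodory's theorem (whose effective ambient dimension for this LP is $s_j = |P_j|$), we arrange a decomposition of $\alb - \e_i$ in which the number $N$ of generators with $\mu_g > 0$ satisfies $N \le s_j$. Round down: $k_g := \lfloor \mu_g \rfloor \in \N$. Since $\sum_g k_g$ and $n$ are both integers, $\ell := n - \sum_g k_g = \sum_g \{\mu_g\}$ is a nonnegative integer with $\ell \le N - 1 \le s_j - 1 \le r - 1$. Adding $\ell$ copies of the minimal generator $x_i^{c_i}$ produces
\[
\gamma^* := \sum_g k_g \, g + \ell \, c_i \, \e_i,
\]
the exponent of a product of exactly $n$ minimal generators of $Q_j$, so $x^{\gamma^*} \in Q_j^n$. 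To verify $\gamma^* \le \alb - \e_i$ componentwise: for $j' \ne i$, $\gamma^*_{j'} = \sum_g k_g \, g_{j'} \le \sum_g \mu_g \, g_{j'} \le (\alb - m\e_i)_{j'} = \alb_{j'}$; and for $j' = i$,
\[
\gamma^*_i \le (\alb_i - m) + \ell \, c_i \le \alb_i - m + (r-1)\, d(I) = \alb_i - 1,
\]
using $c_i \le d(I)$ and $m = (r-1)d(I) + 1$. Thus $x^{\gamma^*}$ divides $x^{\alb - \e_i}$, forcing $x^{\alb - \e_i} \in Q_j^n$, contradicting our choice of $j$.

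The delicate point is the Carath\'eodory/LP step giving $N \le s_j$: the naive bound is only $N \le s_j + 1$, which is one generator too many in the worst case ($Q_j$ nearly $\mm$-primary, $s_j = r$). One handles this by exploiting that $x_i^{c_i}$ is itself a vertex of $NP(Q_j)$ (because every other minimal generator $g \in \Gcc(Q_j)$ satisfies $g_i < c_i$, whence $c_i \e_i \notin \conv\{g \in \Gcc(Q_j) \setminus \{x_i^{c_i}\}\} + \R_+^r$), together with the guaranteed positive slack $\widetilde u_i \ge (r-1)d(I) > 0$; these allow trading one vertex in a would-be $(s_j + 1)$-vertex Carath\'eodory decomposition for $x_i^{c_i}$ without leaving $\R_+^r$, reducing the vertex count to $s_j$ as needed.
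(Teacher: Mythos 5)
Your argument takes a genuinely different route from the paper's. The paper invokes Vasconcelos's theorem that $\overline{Q_j^n}=Q_j^{n-p}\,\overline{Q_j^{p}}$ for some $0\leqslant p\leqslant r-1$, factors $x^{\alb-m\e_i}=m_1m_2$ accordingly, and pads $m_1$ with $x_i^{m-1}\in Q_j^{r-1}$ to push $x^{\alb-\e_i}$ into $Q_j^{n}$ in three lines. You instead re-derive the needed containment from scratch out of the Newton polyhedron by a Carath\'eodory-plus-rounding argument. The skeleton is sound: the identification $x_i\in P_j$, the shift by $(m-1)\e_i$, the rounding, and the componentwise comparison at the end are all correct. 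But the proof does not close at exactly the point you flag as delicate.

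The gap is the claim that the fractional decomposition can be arranged with $N\leqslant s_j$ generators carrying positive weight; only then is $\ell\leqslant N-1\leqslant r-1$ and $\ell c_i\leqslant (r-1)d(I)=m-1$. Carath\'eodory gives only $s_j+1$, and your proposed repair --- trading a vertex for $x_i^{c_i}$ using the slack $\widetilde u_i>0$ --- is a heuristic, not a proof: inserting $c_i\e_i$ into the convex combination (which the slack does permit) produces $s_j+2$ affinely dependent points, and the standard exchange step then deletes \emph{some} point, possibly $c_i\e_i$ itself, or else leaves you again with $s_j+1$ vertices one of which happens to be $c_i\e_i$; nothing forces the count down to $s_j$, and having $c_i\e_i$ present with a possibly integral coefficient does not by itself shrink $\ell$. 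The bound $N\leqslant s_j$ is in fact true, but for a different reason: among all decompositions $\tfrac1n(\alb-m\e_i)=v+u$ with $v\in\conv(\Gcc(Q_j))$ and $u\in\R_{+}^r$, take one minimizing $|v|$; then $v-\varepsilon\e_k\notin\conv(\Gcc(Q_j))$ for every $k$ and every $\varepsilon>0$, so either the generator polytope has dimension $<s_j$ inside $\R^{s_j}$, or $v$ lies on a proper face of it, and Carath\'eodory applied to that face yields at most $s_j$ generators. With that substitution your argument becomes a complete, self-contained proof avoiding the citation to Vasconcelos; as written, however, the pivotal step is unjustified. (A cheaper escape --- accepting $N\leqslant s_j+1$ --- would only prove the lemma with the larger constant $m=rd(I)+1$, not the statement as given.)
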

\begin{proof} Since $x^{\alpha-\e_i}\notin I^{(n)}$, $x^{\alpha-\e_i}\notin Q_j^n$ for some $1\leqslant j\leqslant s$. By \cite[Theorem 7.58]{V}, we have $\overline{Q_j^n} = Q_j^{n-p}\overline{Q_j^p}$ for some $0\leqslant p \leqslant r-1$. 

Since $x^{\alb}\in Q_j^n$ and $x^{\alb-\e_i}\notin Q_j^n$, it follows that $x_i$ divides some generator of $Q_j^{n}$. As the monomial ideal $Q_j$ is primary, $x_i^{\omega(Q_j)}\in Q_j$. In particular, $x_i^{\omega(I)}\in Q_j$ because $\omega(Q_j) \leqslant \omega(I)$.

We now assume on the contrary that $x^{\alb-m\e_i}\in J_n$. Then $x^{\alb-m\e_i}\in \overline{Q_j^n}$. Since $\overline{Q_j^n} = Q_j^{n-p}\overline{Q_j^p}$, there are two monomials $m_1\in Q_j^{n-p}$ and $m_2\in \overline{Q_j^p}$ such that $x^{\alb-m\e_i} = m_1m_2$. It follows that $x^{\alb-\e_i} = (m_1x_i^{m-1}) m_2$. Observe that $x_i^{m-1}\in Q_j^{r-1}$ as $m-1 = (r-1)\omega(I)$. Thus $x^{\alb-\e_i} = (m_1x_i^{m-1}) m_2\in Q_j^{n-p}Q_j^{r-1} \subseteq Q_j^n$, a contradiction. The lemma follows.
\end{proof}

\begin{lem}
\label{lem_dJn}
There is an inequality $\omega(J_n) < \delta(I)n+r$.
\end{lem}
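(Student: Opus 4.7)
The plan is to combine the decomposition of $\smp_n(I)$ from Lemma \ref{NP-powers} with the minimality condition on generators of $J_n$. Recall that $E(J_n) = \smp_n(I) \cap \N^r$, so saying that $x^{\alb}$ is a minimal generator of $J_n$ amounts to saying $\alb \in \smp_n(I) \cap \N^r$ and $\alb - \e_i \notin \smp_n(I)$ for every $i$ with $\alpha_i \geqslant 1$.

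First I would fix such a minimal $\alb$ and invoke Lemma \ref{NP-powers} to write
$$
\alb \;=\; n\sum_{k=1}^{d} \lambda_k \v_k \;+\; \u,
$$
with $\lambda_k \geqslant 0$, $\sum_k \lambda_k = 1$, and $\u \in \R_{+}^r$. Since each $|\v_k| \leqslant \delta(I)$, the convex combination satisfies $\sum_k \lambda_k |\v_k| \leqslant \delta(I)$, and therefore $|\alb| \leqslant n\delta(I) + |\u|$. The whole task reduces to bounding $|\u|$.

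The key step is to show that $u_i < 1$ for each $i$ with $\alpha_i \geqslant 1$. If some $u_i \geqslant 1$, then $\u - \e_i \in \R_{+}^r$, and hence
$$
\alb - \e_i \;=\; n\sum_{k=1}^d \lambda_k \v_k + (\u - \e_i) \;\in\; n\conv\{\v_1,\dots,\v_d\} + \R_{+}^r \;=\; \smp_n(I).
$$
Because $\alpha_i \geqslant 1$, we also have $\alb - \e_i \in \N^r$, so $\alb - \e_i \in E(J_n)$. This contradicts the minimality of $\alb$. Thus $u_i < 1$ whenever $\alpha_i \geqslant 1$, while for $i$ with $\alpha_i = 0$ we automatically have $u_i = 0$ because the vertices $\v_k$ of $\smp(I)$ lie in $\R_{+}^r$.

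Putting it together, $|\u| = \sum_{i:\alpha_i \geqslant 1} u_i < r$, so $|\alb| < n\delta(I) + r$, and taking the supremum over minimal generators gives $d(J_n) < n\delta(I) + r$. I do not foresee a real obstacle here: the argument is purely polyhedral, and the only subtle point is recognizing that the minimality hypothesis forces the ``remainder'' vector $\u$ to have all coordinates strictly less than $1$ in the support of $\alb$.
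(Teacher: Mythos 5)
Your proposal is correct and follows essentially the same route as the paper: decompose $\alb = n\sum_k\lambda_k\v_k+\u$ via Lemma \ref{NP-powers}, use minimality of $x^{\alb}$ in $J_n$ to force $u_i<1$ in each coordinate, and conclude $|\alb|<\delta(I)n+r$. The only difference is that you spell out the justification for $u_i<1$ (and handle the coordinates with $\alpha_i=0$ separately), which the paper leaves implicit.
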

\begin{proof} Let $x^{\alb}\in \Gcc(J_n)$, $\v_1,\ldots,\v_d$ be all the vertices of $\smp(I)$. By Lemma \ref{NP-powers}, we can represent $\alb$ as
$$\alb = n(\lambda_1 \v_1+\cdots+\lambda_d\v_d)+\u$$
where $\lambda_i\geqslant 0$, $\lambda_1+\cdots+\lambda_d = 1$, and $\u=(u_1,\ldots,u_r) \in \R_{+}^r$.

Since $x^{\alb}$ is a minimal generator of $J_n$, necessarily $u_i < 1$ for every $i$. Therefore, $$|\alb| \leqslant \delta(I)n+(u_1+\cdots +u_r) < \delta(I)n + r.$$
It follows that $\omega(J_n) < \delta(I)n+r$, as required.
\end{proof}

Now we are ready for the 

\begin{proof}[Proof of the inequality on the right of \eqref{eq_bound_omega(In)}]

Let $x^{\alb}$ be a minimal generator of $I^{(n)}$. By Remark \ref{GEN-I} we have $x^{\alb-\e_i}\notin I^{(n)}$ for each $i=1,\ldots,r$, whenever $\alpha_i\geqslant 1$.  For $1\le i\le r$, set $ m_i =  (r-1)\omega(I)+1$. By Lemma \ref{lem_fromI(n)toJn},  $x^{\alb-m_i\e_i} \notin J_n$ if $\alpha_i \geqslant (r-1)\omega(I)+1$. 

By Lemma \ref{N0}, there are integers $0\leqslant n_i \leqslant (r-1)\omega(I)$ such that the monomial $x^{\alb-(n_1\e_1+\cdots+n_r\e_r)}$ is a minimal generator of $J_n$. Thus
$$
\omega(J_n) \geqslant |\alb|-(n_1+\cdots+n_r) \geqslant |\alb| - r(r-1)\omega(I),
$$
and hence $|\alb| \leqslant \omega(J_n) +r(r-1)\omega(I)$. It follows that $\omega(I^{(n)}) \leqslant \omega(J_n) + r(r-1)\omega(I)$. Together with Lemma \ref{lem_dJn}, we obtain
\[
\omega(I^{(n)}) \leqslant \delta(I)n + r +r(r-1)\omega(I).
\]
This is the desired inequality.
\end{proof}
For the remaining inequality in \eqref{eq_bound_omega(In)}, we will make some use of Lemma \ref{lem_facets}.
\begin{proof}[Proof of the inequality on the left of \eqref{eq_bound_omega(In)}]

Let  $\v = (v_1,\ldots,v_r)$ be a vertex of the polyhedron $\smp(I)$ such that $\delta(I) = |\v|$. Let $\alb = (\alpha_1,\ldots,\alpha_r)\in\N^r$ where $\alpha_i = \lceil nv_i \rceil$. Because $n\v$ is a vertex of $\smp_n(I)$, $x^{\alb} \in J_n$.

For each $i=1,\ldots,s$, we have $x^{\alb}\in \overline{Q_i^n} = Q_i^{n-(r-1)}\overline{Q_i^{r-1}}$ by \cite[Theorem 7.58]{V}, so we can write
$$
x^{\alb} = m_{i,1}m_{i,2}m_{i,3}
$$
where $m_{i,1}\in Q_i$, $m_{i,2} \in Q_i^{n-r}$ and $m_{i,3}\in \overline{Q_i^{r-1}}$. Let $f_i = m_{i,1}^{r-1}$ so that $\deg(f_i) \leqslant (r-1)\omega(Q_i)\leqslant (r-1)\omega(I)$. We have $x^{\alb}f_i=(m_{i,1}^rm_{i,2})m_{i,3} \in Q_i^n$.

Let $x^{\btb} = f_1\cdots f_s$ and $x^{\gmb} = x^{\alb}x^{\btb}$. Then $x^{\gmb}\in Q_i^n$ for all $i$, consequently $x^{\alb} x^{\btb}\in I^{(n)}$. Moreover, $\gamma_i = 0$ if and only if $\alpha_i =0$, if and only if $v_i = 0$. Note that $|\btb| =\deg(f_1)+\cdots+\deg(f_s) \leqslant s (r-1)\omega(I)$.

By Lemma \ref{lem_facets}, the convex polyhedron $\smp(I)$ is the solutions in $\R^r$ of a system of linear inequalities of the form
$$\{\x\in\R^r \mid \left<\a_j,\x\right> \geqslant b_j, \ j=1,2,\ldots, q\},$$
such that: 
\begin{enumerate}
 \item each equation $\left<\a_j,\x\right> = b_j$ defines a facets of $\smp(I)$,
 \item  $\a_j\in \N^r$, $b_j\in \N$, and,
 \item  $|\a_j| \leqslant r^2\omega(I)^{r-1}$ for any $j$.
\end{enumerate}

Let $\rho = r^2\omega(I)^{r-1}$ so that $|\a_j| \leqslant \rho$ for every $j=1,\ldots,q$.

Since $\v$ is a vertex of $\smp(I)$, by \cite[Formula 23 in Page 104]{S}, we may assume that $\v$ is the unique solution of the following system
$$
\left\{\x\in\R^r \mid \left<\a_i,\x\right> = b_i, i=1,\ldots,r\right\}.
$$

For an index $i$ with $\gamma_i\geqslant 1$, since the last system has a unique solution, we deduce that $\a_{j,i} \ne 0$ for some $1\leqslant j\leqslant r$. For simplicity, we denote $\a = \a_{j} = (a_1,\ldots,a_r)$ so that $a_i\geqslant 1$. 

Let $m = \rho(1+s(r-1)\omega(I)) +1$. If $\gamma_i\geqslant m$, we have
\begin{align*}
\left<\a,\gmb-m\e_i\right>&=\left<\a,\alb\right>+\left<\a,\btb\right>-a_im \leqslant \left<\a,n\v+\e_1+\cdots+\e_r\right>+\left<\a,\btb\right>-a_im\\
&=\left<\a,n\v\right>+|\a|+\left<\a,\btb\right>-a_im = nb_j + |\a|+\left<\a,\btb\right>-a_im \\
& \leqslant nb_j +|\a| +|\a||\btb|-m < nb_j
\end{align*}
since $m = \rho(1+s(r-1)\omega(I)) +1 > |\a| +|\a||\btb|$. Consequently, $x^{\gmb - m\e_i} \notin J_n$, and hence $x^{\gmb - m\e_i} \notin I^{(n)}$.

By Lemma \ref{N0}, there are non-negative integers $n_i\leqslant \rho(1+s(r-1)\omega(I))$ for $i=1,\ldots,r$ such that
$x^{\gmb -(n_1\e_1+\cdots+n_r\e_r)}$ is  a minimal generator of $I^{(n)}$. Therefore
\begin{align*}
\omega(I^{(n)}) &\geqslant |\gmb| - (n_1+\cdots+n_r) \geqslant |\alb|+|\btb| - r\rho(1+s(r-1)\omega(I))\\
&\geqslant |\alb|-r\rho(1+s(r-1)\omega(I))\geqslant |n\v|-r\rho(1+s(r-1)\omega(I))\\
&=\delta(I)n - r\rho(1+s(r-1)\omega(I)).
\end{align*}
This finishes the proof of \eqref{eq_bound_omega(In)} and hence that of Theorem \ref{thm_limit_d}.
\end{proof}

The second main result of this paper is

\begin{thm}
\label{thm_limit_reg} 
There is an equality
$\lim_{n\to\infty} \dfrac{\reg(I^{(n)})}{n} = \delta(I)$.
\end{thm}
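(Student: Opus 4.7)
The lower bound $\liminf_{n \to \infty} \reg(I^{(n)})/n \geq \delta(I)$ is immediate: the inequality $d(M) \leq \reg_R(M)$ holds for any finitely generated graded $R$-module $M$, so $d(I^{(n)}) \leq \reg(I^{(n)})$. Dividing by $n$ and invoking Theorem \ref{thm_limit_d} gives the claim.

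The substantive content is the upper bound $\limsup_{n \to \infty} \reg(I^{(n)})/n \leq \delta(I)$. My plan is to reduce to Kodiyalam's linearity theorem on ordinary powers by passing to a Veronese subring of the symbolic Rees algebra. Since $I$ is monomial, $\Rcc_s(I) = \bigoplus_{n \geq 0} I^{(n)} t^n$ is a finitely generated $R$-algebra by \cite[Theorem 3.2]{HHT}; a standard argument about Noetherian graded rings then produces an integer $s \geq 1$ such that $(I^{(s)})^m = I^{(sm)}$ for every $m \geq 0$. Write $J := I^{(s)}$, an ordinary monomial ideal of $R$.

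For each residue $r \in \{0, 1, \ldots, s-1\}$, the $R$-module $N_r := \bigoplus_{m \geq 0} I^{(sm+r)}$ carries a graded module structure over the ordinary Rees algebra $\Rcc(J) = R[Jt]$ (with the degree-$m$ component being $I^{(sm+r)}$ and $J$ acting via $I^{(s)} \cdot I^{(sm+r)} \subseteq I^{(s(m+1)+r)}$). Because $\Rcc_s(I) = \bigoplus_{r=0}^{s-1} N_r$ is finitely generated over its $s$-th Veronese subring $\Rcc(J)$, each $N_r$ is a finitely generated graded $\Rcc(J)$-module. By the module-theoretic extension of Kodiyalam's theorem (in the vein of \cite[Theorem 4.3]{CHT}), the function $m \mapsto \reg_R((N_r)_m) = \reg(I^{(sm+r)})$ is eventually linear in $m$ with slope at most $d_\infty(J) := \lim_{m \to \infty} d(J^m)/m$. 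Since $J^m = I^{(sm)}$, Theorem \ref{thm_limit_d} identifies $d_\infty(J) = s \cdot \delta(I)$. Hence $\reg(I^{(sm+r)}) \leq s\delta(I) \cdot m + C_r$ for some constant $C_r$ and all $m \gg 0$. Setting $n = sm+r$ and letting $m \to \infty$ gives $\reg(I^{(n)})/n \to \delta(I)$ along every residue class mod $s$; since the $s$ residue classes cover all sufficiently large $n$, the unconditional limit exists and equals $\delta(I)$.

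The main obstacle I foresee is pinning down the slope bound in the module version of Kodiyalam's theorem: the classical statement for ordinary powers $J^m$ gives slope exactly $d_\infty(J)$, but for a finitely generated $\Rcc(J)$-module $N_r$ one must bound the slope of $\reg_R((N_r)_m)$ by $d_\infty(J)$. This can be extracted from the $\Rcc(J)$-generators of $N_r$ (finitely many, in bounded bidegrees) combined with Kodiyalam's bound on each $J^{m-e_j}$ and a careful mapping-cone or Betti-splitting argument to convert the surjection $\bigoplus_j J^{m-e_j}(-D_j) \twoheadrightarrow I^{(sm+r)}$ into a regularity estimate. Once this module-theoretic slope bound is secured, the Veronese trick combined with Theorem \ref{thm_limit_d} closes the proof.
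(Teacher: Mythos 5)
Your lower bound is fine and coincides with the paper's (it uses $\reg I^{(n)}\geqslant d(I^{(n)})\geqslant \delta(I)n+c$ from \eqref{eq_bound_d(In)}). For the upper bound you take a genuinely different route: the paper never passes through the symbolic Rees algebra here, but instead proves the explicit uniform bound $t_i(I^{(n)}) \leqslant \delta(I)n+2r+r(r^2d(I)^{r-1}+(r-1)d(I))$ for \emph{all} $i$ at once (Lemma \ref{L1}), by combining the upper Koszul simplicial complex description of multigraded Betti numbers (Lemma \ref{L00}) with the facet description of $\smp(I)$ (Lemma \ref{lem_facets}) and the comparison of $I^{(n)}$ with $J_n$ (Lemma \ref{lem_fromI(n)toJn}). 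That argument is self-contained and effective, whereas yours outsources the work to Kodiyalam-type theorems.

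The gap in your argument is exactly where you suspect it, and it is genuine. What the CHT/Trung--Wang theorem gives for the finitely generated $\Rcc(J)$-module $N_r$ is that $\reg((N_r)_m)$ is eventually linear in $m$ with slope at most $d(J)=d(I^{(s)})$ --- the maximal degree of the algebra generators of $\Rcc(J)$ --- \emph{not} at most $d_\infty(J)=s\delta(I)$. (For the residue class $r=0$ Kodiyalam's theorem for ideals does identify the slope with $\lim_m d(J^m)/m$, but for the other residue classes you need a module-theoretic version of that identification, which you do not prove.) Since the available estimate \eqref{eq_bound_d(In)} only gives $d(I^{(s)})\leqslant s\delta(I)+r+r(r-1)d(I)$, your fixed $s$ yields only $\limsup_n \reg(I^{(n)})/n\leqslant d(I^{(s)})/s$, which need not equal $\delta(I)$. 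Moreover, your proposed repair via the surjection $\bigoplus_j J^{m-e_j}(-D_j)\twoheadrightarrow I^{(sm+r)}$ cannot work as stated: regularity is not monotone under surjections, so bounding the regularity of the source says nothing about the target without also controlling the kernel. The strategy is nevertheless salvageable with the crude slope bound alone: if $(I^{(s)})^m=I^{(sm)}$ for all $m$, the same holds with $s$ replaced by any multiple $ks$, so your argument gives $\limsup_n \reg(I^{(n)})/n\leqslant d(I^{(ks)})/(ks)$ for every $k\geqslant 1$, and the right-hand side tends to $\delta(I)$ as $k\to\infty$ by Theorem \ref{thm_limit_d}. With that extra limiting step (which is in the spirit of Corollary \ref{C1} for cover ideals, where $s=2$), your proof closes; as written, it does not.
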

Recall that for any finitely generated graded $R$-module $M$, and for any $i\ge 0$, we have the notation 
$$
t_i(M)=\sup\{j: \Tor^R_i(k,M)_j\neq 0\}.
$$
From Theorem \ref{thm_limit_d} and the fact that $\omega(M)\le \reg M$, we see that Theorem \ref{thm_limit_reg} will follow from a suitable linear upper bound for $\reg I^{(n)}$. This is accomplished by
\begin{lem}\label{L1} 
For all $i\geqslant 0$, there is an inequality 
$$
t_i(I^{(n)}) \leqslant \delta(I)n+2r+r(r^2\omega(I)^{r-1}+(r-1)\omega(I)).
$$
\end{lem}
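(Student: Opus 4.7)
The plan is to bound $|\alb|$ whenever $\beta_{i,\alb}(I^{(n)}) \ne 0$, using the identity $t_i(I^{(n)}) = \sup\{|\alb| : \beta_{i,\alb}(I^{(n)}) \ne 0\}$. The case $i = 0$ is immediate from \eqref{eq_bound_d(In)}, since $t_0(I^{(n)}) = d(I^{(n)}) \le \delta(I)n + r + r(r-1)d(I)$, and this is dominated by the stated bound (as $r\rho \ge 0$). So the real work is for $i \ge 1$.

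For $i \ge 1$, Lemma~\ref{L00} reduces the question to controlling the degrees $\alb$ at which $\widetilde{H}_{i-1}(K^{\alb}(I^{(n)}); k)$ is nonzero. Nontriviality forces $x^{\alb} \in I^{(n)}$ and rules out $K^{\alb}(I^{(n)})$ being the full simplex on $\supp(\alb)$. The strategy is to show that if $|\alb|$ is large then $K^{\alb}(I^{(n)})$ is necessarily the full simplex on $[r]$, hence contractible, contradicting the nontriviality. The central technical step is an iterated application of the contrapositive of Lemma~\ref{lem_fromI(n)toJn}. Setting $m := (r-1)d(I) + 1$, I prove by induction on $|S|$ that whenever $x^{\alb}\in I^{(n)}$, $\alpha_k \ge m$ for every $k \in [r]$, and $\alb - m\mathbf{1} \in E(J_n) = \smp_n(I)\cap\N^r$, one has $x^{\alb - \mathbf{1}_S} \in I^{(n)}$ for every $S \subseteq [r]$. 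The induction step invokes the contrapositive with $\btb := \alb - \mathbf{1}_{S\setminus\{j\}}$ and $i := j$; its hypothesis $x^{\btb - m\e_j} \in J_n$ is supplied by the coordinatewise inequality $\btb - m\e_j \ge \alb - m\mathbf{1}$ together with the up-closure of $E(J_n)$ in $\N^r$. This establishes the dichotomy: for $\beta_{i,\alb}(I^{(n)}) \ne 0$ with $i \ge 1$, either (A) some $\alpha_k < m$, or (B) $\alb - m\mathbf{1} \notin \smp_n(I)$.

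The main obstacle will be converting this dichotomy into a linear bound on $|\alb|$ with the sharp leading coefficient $\delta(I)$. Case (A) contributes at most $r(r-1)d(I)$ to $|\alb|$ when summed over the small coordinates. Case (B) via Lemma~\ref{lem_facets} furnishes a facet-defining inequality $\langle \a, \x \rangle \ge nb$ of $\smp_n(I)$ with $|\a| \le \rho := r^2d(I)^{r-1}$ violated by $\alb - m\mathbf{1}$, giving $\langle \a, \alb \rangle < nb + m\rho$. A single such inequality does not directly control $|\alb|$ if $\a$ has vanishing components, so the plan is to refine the dichotomy coordinatewise: since $K^{\alb}(I^{(n)})$ is not a cone over any single vertex, for each $k$ with $\alpha_k \ge m$ one can produce a facet inequality $\langle \a_k, \x \rangle \ge nb_k$ of $\smp_n(I)$ with $(\a_k)_k \ge 1$ (so the facet is transverse to $\e_k$) and $\langle \a_k, \alb \rangle < nb_k + (m+1)\rho$. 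Collecting these $r$ facet constraints localizes $\alb$ to a neighborhood of a vertex $n\v$ of $\smp_n(I)$, which satisfies $|n\v| \le \delta(I)n$ by the definition of $\delta(I)$. Summing the case (A) contribution $r(r-1)d(I)$, the $r\rho$ per-coordinate error from proximity to facets, and an additional $2r$ arising from integrality yields the desired bound $|\alb| \le \delta(I)n + 2r + r\rho + r(r-1)d(I)$.
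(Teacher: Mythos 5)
Your overall architecture matches the paper's: reduce via Lemma \ref{L00} to bounding $|\alb|$ when $\widetilde{H}_{i-1}(K^{\alb}(I^{(n)});k)\neq 0$, use the fact that $K^{\alb}(I^{(n)})$ is not a cone over any vertex to produce, for each coordinate $k$, a face $\tau$ with $x^{\alb-\tau}\in I^{(n)}$ and $x^{\alb-\tau-\e_k}\notin I^{(n)}$, and then feed this into Lemma \ref{lem_fromI(n)toJn} and the facet description of Lemma \ref{lem_facets}. (Your intermediate induction on $|S|$ giving the full-simplex dichotomy is correct but, as you yourself note, too weak, and you discard it.) The gap is in the final quantitative step. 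From $x^{\alb-\tau-m\e_k}\notin J_n$ you derive $\langle \a_k,\alb\rangle < nb_k+(m+1)\rho$; writing $\alb=n\sum_i\lambda_i\v_i+\u$ with $\u\in\R_+^r$ (Lemma \ref{NP-powers}), this only yields $u_k\le\langle\a_k,\u\rangle<(m+1)\rho$, so the error you actually control is $r(m+1)\rho=r\bigl((r-1)d(I)+2\bigr)r^2d(I)^{r-1}$, not the claimed $2r+r\rho+r(r-1)d(I)$. The "summing" in your last sentence does not correspond to any valid computation: each coordinate falls into exactly one of cases (A) (error $<m$) or (B) (error $<(m+1)\rho$), so you cannot add a $\rho$ and an $(r-1)d(I)$ contribution per coordinate. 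Moreover, "localizes $\alb$ to a neighborhood of a vertex" is not literally true — the $r$ facet inequalities need not be distinct or meet in a vertex; what rescues the degree bound is the decomposition of Lemma \ref{NP-powers}, not vertex localization. The upshot: your method proves $t_i(I^{(n)})\le\delta(I)n+C$ for some constant $C$ independent of $n$, which suffices for Theorem \ref{thm_limit_reg}, but it does not prove the inequality as stated.

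The paper obtains the sharper constant by not discarding the term $ma_{k,k}$. Keeping $\langle\a_k,\alb\rangle<nb_k+\langle\a_k,\tau\rangle+ma_{k,k}$ and using $\langle\a_k,\tau\rangle\le\rho\le\rho a_{k,k}$, one gets $\langle\a_k,\alb-(\rho+m)\e_k\rangle<nb_k$, i.e. $x^{\alb-(\rho+m)\e_k}\notin J_n$ whenever $\alpha_k\ge\rho+m$. Lemma \ref{N0} then produces a minimal generator of $J_n$ within $\ell_1$-distance $r(\rho+m-1)$ of $\alb$ (after subtracting $\tau$, costing a further $r$), and Lemma \ref{lem_dJn} gives $d(J_n)<\delta(I)n+r$, which assembles exactly to the stated bound. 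So your proof needs its last step replaced by this two-stage passage through minimal generators of $J_n$; as written it only establishes a weaker (though still linear-in-$n$) bound.
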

\begin{proof} By Lemma \ref{lem_facets}, the convex polyhedron $\smp(I)$ is the solutions in $\R^r$ of a system of linear inequalities of the form
$$\{\x\in\R^r \mid \left<\a_j,\x\right> \geqslant b_j, \ j=1,2,\ldots, q\},$$
where for each $j$, the equation $\left<\a_j,\x\right> = b_j$ defines a facets of $\smp(I)$, $\a_j\in \N^r$, $b_j\in \N$, and $|\a_j| \leqslant r^2\omega(I)^{r-1}$. 

Let $\rho= r^2\omega(I)^{r-1}$ so that $|\a_j| \leqslant \rho$ for every $j$.

Take $\alb = (\alpha_1,\ldots,\alpha_r)\in \N^r$ such that $\beta_{i,\alb}(I_{\Delta}^{(n)}) \ne 0$.  Since $\beta_{i,\alb}(I_{\Delta}^{(n)}) = \dim_k \widetilde{H}_{i-1}(K^{\alb}(I_{\Delta}^{(n)});k)\ne 0$ by Lemma \ref{L00}, we have $K^{\alb}(I_{\Delta}^{(n)})$ is not a cone. Hence, for each $j=1,\ldots, r$, we have $j \notin \tau$ for some $\tau\in \mathcal F(K^{\alb}(I_{\Delta}^{(n)}))$. 

Since $\tau \cup\{j\} \notin K^{\alb}(I_{\Delta}^{(n)})$, we have $x^{\alb-\tau-\e_j}\notin I^{(n)}$. Let $m = (r-1)\omega(I)+1$. 

\textbf{Claim}: If $\alpha_j\geqslant \rho+m$, then $x^{\alb-(\rho+m)\e_j}\notin J_n$.

Indeed, by Lemma \ref{lem_fromI(n)toJn}, $x^{\alpha-\tau-m\e_j}\notin J_n$. Therefore, $\left <\a_i,\alb-\tau-m\e_j\right> < nb_i$ for some $1\leqslant i\leqslant q$. Since $x^{\alb-\tau}\in I^{(n)} \subseteq J_n$, we have $\left <\a_i,\alb-\tau\right> \geqslant nb_i$. It follows that $\a_{i,j}\geqslant 1$. Thus
\begin{align*}
\left <\a_i,\alb-(\rho+m)\e_j\right>&=\left <\a_i,\alb-\tau-m\e_j\right> +\left <\a_i,\tau-\rho\e_j\right> < nb_j +\langle \a_i,\tau-\rho\e_j\rangle\\
&=nb_j +\langle \a_i,\tau \rangle -\langle \a_i,\rho \e_j \rangle = nb_j +\langle \a_i,\tau \rangle -\a_{i,j} \rho\\
&\leqslant nb_j+\langle \a_i,\tau \rangle-\rho \leqslant nb_j.
\end{align*}
The last inequality holds since $\rho \geqslant |\a_i|\geqslant \langle \a_i,\tau \rangle$. Consequently, $x^{\alb-(\rho+m)\e_j}\notin J_n$, as desired.

By Lemma \ref{lem_fromI(n)toJn}, there are integers $0\leqslant n_i \leqslant \rho+m-1$ for $i=1,\ldots,r$, for which
$$x^{\alb-\tau -n_1\e_1-\cdots-n_r\e_r} \in \Gcc(J_n).
$$
It follows that
$$\omega(J_n) \geqslant |\alb| -|\tau| - (n_1+\cdots+n_r) \geqslant |\alb| -r-r(\rho+m-1),$$
and hence 
$$|\alb| \leqslant \omega(J_n)+r+r(\rho+m-1) = \omega(J_n) +r+r(r^2\omega(I)^{r-1}+(r-1)\omega(I)).$$ 
Together with Lemma \ref{lem_dJn}, this yields
\begin{align*}
 t_i(I^{(n)}) &\leqslant \omega(J_n)+r+r\left(r^2\omega(I)^{r-1}+(r-1)\omega(I)\right)\\
              &\leqslant \delta(I)n+2r+r\left(r^2\omega(I)^{r-1}+(r-1)\omega(I)\right),
\end{align*}
and the proof is complete.
\end{proof}

\begin{proof}[Proof of Theorem \ref{thm_limit_reg}]
By Lemma \ref{L1} we have
$$
\reg I^{(n)} =\max\{t_i(I^{(n)})-i \mid i\geqslant 0\} \leqslant \delta(I)n+2r+r(r^2\omega(I)^{r-1}+(r-1)\omega(I)).
$$

On the other hand,  by the proof of Theorem \ref{thm_limit_d} (more precisely \eqref{eq_bound_omega(In)}), there exists $c\in \R$ such that $$\omega(I^{(n)}) \geqslant \delta(I)n+c \, \text{ for all } n\geqslant 1.$$
In particular, $\reg I^{(n)} \geqslant \omega(I^{(n)}) \geqslant \delta(I)n+c$ for all $n\geqslant 1$. Thus,
$$\delta(I)n+c \leqslant \reg I^{(n)} \leqslant \delta(I)n+2r+r(r^2\omega(I)^{r-1}+(r-1)\omega(I))$$
for all $n\geqslant 1$. It follows that
$$\lim_{n\to\infty} \dfrac{\reg(I^{(n)})}{n} = \delta(I),$$
as required.
\end{proof}

\begin{rem} Although the limits $\lim_{n\to\infty} \dfrac{\omega(I^{(n)})}{n}$ and $\lim_{n\to\infty} \dfrac{\reg(I^{(n)})}{n}$ do exist, it is not true that the limit
$$
\lim_{n\to\infty} \dfrac{t_i(I^{(n)})}{n}
$$
exists for all $i\ge 0$.
\end{rem}

\begin{exm} In the polynomial ring $R = \Q[x,y,z,u,v]$, consider the ideal $I$ with the primary decomposition
$$I = (x^2,y^2,z^2)^2 \cap (x^3,y^3,u) \cap (z,v).$$
By \cite[Lemma 4.5]{NgT} we have 
$$
\depth(R/I^{(n)}) = 
\begin{cases}
1 &\text{ if } n \text{ is odd},\\
2 &\text{ if } n \text{ is even}.
\end{cases}
$$
From the Auslander-Buchsbaum formula, we get 
$$
\pd I^{(n)}=
\begin{cases}
3 &\text{ if } n \text{ is odd},\\
2 &\text{ if } n \text{ is even}.
\end{cases}
$$
In particular, $t_3(I^{(n)}) = -\infty$ if $n$ is even, and $t_3(I^{(n)}) >0$ if $n$ is odd. Since $t_3(I^{(n)})$ is a quasi-linear function in $n$ for $n\gg 0$, we deduce that
$$\liminf_{s\to\infty} \frac{t_3(I^{(2s+1)})}{2s+1} \geqslant 0.$$ 
So the limit $\lim_{n\to\infty} \dfrac{t_3(I^{(n)})}{n}$ does not exit.
\end{exm}
\begin{exm}
\label{exm_non-linear}
Let $p,q\ge 1$ be integers. Let $R=k[x,y,z,t]$, $I=I_{p,q}=(x,y) \cap (x,z^p) \cap (y^p,t^q)$. Explicitly
\[
I=(xy^p,xt^q,y^pz^p,yz^pt^q).
\]
On the one hand, it is not hard to compute $\delta(I)$. Indeed, $\smp(I)$ is defined by linear inequalities
\[
\begin{cases}
x+y \ge 1,\\
px+z \ge p,\\
qy+pt \ge pq,\\
x,y,z,t \ge 0.
\end{cases}
\]
Subtracting $z,t$ (and also $x$ and $y$ if necessary), to suitable non-negative numbers, we see that any vertex of $\smp(I)$ must satisfy the following system of equalities and inequalities
\[
\begin{cases}
x+y \ge 1,\\
px+z = p,\\
qy+pt = pq,\\
x,y,z,t \ge 0.
\end{cases}
\]
\end{exm}
This has solution $z=p(1-x),t=q(p-y)/p$ and
\[
\begin{cases}
x+y \ge 1,\\
0\le x \le 1,\\
0\le y\le p.
\end{cases}
\]
The last system yields a trapezoid in the $xy$ plane with vertices 
$$
(x,y) \in \{(1,0),(0,1),(0,p),(1,p)\}.
$$
Hence $\smp(I)$ has the following vertices
\[
(x,y,z,t) \in \{(1,0,0,q),(0,1,p,q(p-1)/p),(0,p,p,0),(1,p,0,0)\}.
\]
In particular, $\delta(I) \in \{q+1,p+q+1-q/p,2p\}$, and concretely
\[
\delta(I)=\begin{cases}
                 q+1, &\text{if $q \ge p^2+1$},\\
                 p+q+1-q/p, &\text{if $p^2\ge q \ge p$},\\
                 2p, &\text{if $q\le p-1$}.
                \end{cases}
\]
Therefore
\[
\lim_{n\to \infty} \frac{\reg I^{(n)}}{n}=\begin{cases}
                 q+1, &\text{if $q \ge p^2+1$},\\
                 p+q+1-q/p, &\text{if $p^2\ge q \ge p$},\\
                 2p, &\text{if $q\le p-1$}.
                \end{cases}
\]
On the other hand, the regularity function $I^{(n)}$ can be rather complicated in certain cases. For example, the above asymptotic formula suggests that when $q=p+1$, $\reg I_{p,p+1}^{(n)}$ seems to be eventually quasi-linear of periodic $p$. Experiments with Macaulay2 \cite{GS} also suggest that $I_{p,p+1}^{(n)}$ is not Koszul for all $p\ge 2$, $n\ge 1$, hence the techniques developed in the present paper do not apply to the computation of $\reg I_{p,p+1}^{(n)}$. We also see from the asymptotic formula that for $p\ge 2$, $\reg I_{p,p+1}^{(n)}$ is not eventually linear. Hence Question \ref{quest_2_linear_behavior} has a negative answer in embedding dimension 4 if we also take non-squarefree monomial ideals into account.

\section{Cover ideals}
\label{sect_coverideals}

In this section we investigate the symbolic powers of cover ideals of  graphs. Our main results in this section are:
\begin{enumerate}
 \item Theorem \ref{thm_delta_JG}, which determines explicitly the invariant $\delta(J(G))$ in terms of the combinatorial data of $G$;
 \item Theorem \ref{thm_delta_JG_equal_taumaxG}, which provides large families of graphs $G$ such that $\delta(J(G))$ attains its minimal value $\omega(J(G))$;
 \item Theorem \ref{thm_gen_degree_symbolicpowers}, which computes the maximal generating degrees of the symbolic powers of $J(G)$.
\end{enumerate}
Combining Theorems \ref{thm_delta_JG} and \ref{thm_gen_degree_symbolicpowers} with a result on the Koszul properties of the symbolic powers of some cover ideals, we construct in Theorem \ref{thm_non-linear} a family of graphs $G$ for which both $\reg J(G)^{(n)}$ and $\omega(J(G)^{(n)})$ are not eventually linear function of $n$.

Let $\Delta$ be a simplicial complex on the vertex set $\{1,\ldots,r\}$ and $n\geqslant 1$. We first describe $\smp_n(I_{\Delta})$ in a more specific way. For $F\in \mathcal F(\Delta)$, $NP(P_F^n)$ is defined by the system
$$\sum_{i\notin F} x_i \geqslant n, x_1\geqslant 0, \ldots, x_r\geqslant 0,$$ 
so that $\smp_n(I_{\Delta})$ is determined by the following system of inequalities:
\begin{equation}\label{EQ1}
\begin{cases}
\sum_{i\notin F} x_i \geqslant n, ~\text{for}~ F\in\mathcal F(\Delta),\\
x_1\geqslant 0,\ldots,x_r\geqslant 0.
\end{cases}
\end{equation}

From this, one has
\begin{rem}
\label{rem_GEN} Let $x^{\alb}\in I_{\Delta}^{(n)}$ be a monomial. The following are equivalent:

\begin{enumerate}
 \item $x^{\alb}\in \Gcc(I_{\Delta}^{(n)})$;
 \item for every $i$ such that $\langle \alb,\e_i\rangle\geqslant 1$, we have $x^{\alb-\e_i}\notin I_{\Delta}^{(n)}$,
 \item for every $i$ such that $\langle \alb,\e_i\rangle\geqslant 1$, there exists $F \in \Fcc(\Delta)$ such that $i\notin F$ and $ \left \langle \alb,\sum_{j\notin F}\e_j \right \rangle = n$.
\end{enumerate}
\end{rem}
The following lemma is a consequence of the last remark.
\begin{lem}
\label{lem_raising_power_mingens}
Let $p\geqslant 1$, $m_1,\ldots,m_p\geqslant 0$ be integers and $x^{\alb^j} \in I_{\Delta}^{(m_j)}$ be monomials for $j=1,\ldots,p$. Assume that $x^{\alb^1+\cdots+ \alb^p} \in I_{\Delta}^{(m_1)}\cdots I_{\Delta}^{(m_p)} \subseteq I_{\Delta}^{(m_1+\cdots +m_p)}$ is a minimal generator of $I_{\Delta}^{(m_1+\cdots+m_p)}$. Then for all $n_1,\ldots,n_p\geqslant 0$, $x^{n_1\alb^1+\cdots +n_p\alb^p}$ is a minimal generator of $I_{\Delta}^{(m_1n_1+\cdots+m_pn_p)}$.

In particular: 
\begin{enumerate}[\quad \rm(i)]
\item For every subset $W \subseteq [p]$, $x^{\sum_{i\in W} \alb^i}$ is a minimal generator of $I_{\Delta}^{(\sum_{i\in W}m_i)}$.
 \item If $x^{\alb}\in \Gcc(I_{\Delta})$ then $x^{n\alb} \in \Gcc(I_{\Delta}^{(n)})$ for all $n\geqslant 1$.
\end{enumerate}
\end{lem}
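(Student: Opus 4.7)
The plan is to proceed directly from the characterization of minimal generators of $I_{\Delta}^{(n)}$ given in Remark \ref{rem_GEN}. Set $\alb=\alb_1+\cdots+\alb_p$, $m=m_1+\cdots+m_p$, $\btb=n_1\alb_1+\cdots+n_p\alb_p$, and $n=m_1n_1+\cdots+m_pn_p$. First I will check containment: since $x^{\alb_j}\in I_{\Delta}^{(m_j)}$ implies $x^{n_j\alb_j}\in I_{\Delta}^{(m_jn_j)}$, multiplying these gives $x^{\btb}\in I_{\Delta}^{(m_1n_1)}\cdots I_{\Delta}^{(m_pn_p)}\subseteq I_{\Delta}^{(n)}$ by the standard product containment for symbolic powers of monomial ideals (which follows from Lemma \ref{lem_symbolic_power_monomial} applied prime by prime).

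The real content is minimality. Fix an index $i$ with $\beta_i\geqslant 1$; then $(\alb_j)_i\geqslant 1$ for some $j$ with $n_j\geqslant 1$, so in particular $\alpha_i\geqslant 1$. Applying the hypothesis that $x^{\alb}$ is a minimal generator of $I_{\Delta}^{(m)}$ through Remark \ref{rem_GEN}, I obtain a facet $F_i\in\mathcal F(\Delta)$ with $i\notin F_i$ and $\sum_{\ell\notin F_i}\alpha_\ell=m$. The key observation is that since each $x^{\alb_j}\in I_{\Delta}^{(m_j)}$, the system \eqref{EQ1} yields the inequality $\sum_{\ell\notin F_i}(\alb_j)_\ell\geqslant m_j$ for every $j$. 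Summing these over $j$ and comparing with
\[
\sum_{j=1}^{p}\sum_{\ell\notin F_i}(\alb_j)_\ell=\sum_{\ell\notin F_i}\alpha_\ell=m=\sum_{j=1}^{p}m_j,
\]
forces the equality $\sum_{\ell\notin F_i}(\alb_j)_\ell=m_j$ for each $j$ individually.

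With these equalities in hand, a direct computation gives
\[
\sum_{\ell\notin F_i}\beta_\ell=\sum_{j=1}^{p}n_j\sum_{\ell\notin F_i}(\alb_j)_\ell=\sum_{j=1}^{p}n_jm_j=n.
\]
Since this holds for every $i$ with $\beta_i\geqslant 1$, Remark \ref{rem_GEN} implies that $x^{\btb}$ is a minimal generator of $I_{\Delta}^{(n)}$, proving the main assertion. Part (i) then follows by specializing $(n_1,\ldots,n_p)$ to the characteristic vector of $W$, and part (ii) by taking $p=1$, $m_1=1$, $\alb_1=\alb$, $n_1=n$.

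I do not expect a genuine obstacle: the main step is the squeeze argument that turns the minimality of $x^{\alb}$ at level $m$ into term-by-term equalities $\sum_{\ell\notin F_i}(\alb_j)_\ell=m_j$, and the rest is linear bookkeeping.
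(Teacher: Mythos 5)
Your proof is correct and follows essentially the same route as the paper's: both apply Remark \ref{rem_GEN} to extract, for each relevant coordinate $i$, a facet $F$ on which $\langle\alb_1+\cdots+\alb_p,\sum_{\ell\notin F}\e_\ell\rangle=m_1+\cdots+m_p$, and then use the squeeze between $\sum_{\ell\notin F}(\alb_j)_\ell\geqslant m_j$ and the total to force term-by-term equalities, from which the conclusion for $x^{n_1\alb_1+\cdots+n_p\alb_p}$ is immediate. The only cosmetic difference is that the paper first reduces to the case $m_j\geqslant 1$ by showing $\alb_j=\mathbf{0}$ whenever $m_j=0$, whereas your squeeze handles those indices automatically.
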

\begin{proof}
We claim that for every $1\leqslant i \leqslant p$, if $m_i=0$ then $\alb^i=\bf{0}$. Indeed, for example, assume $m_p=0$ and $\alb^p \neq \bf{0}$. Then
\[
x^{\alb^1+\cdots+ \alb^p}=x^{\alb^p}x^{\alb^1+\cdots+ \alb^{p-1}} \notin \Gcc(I_{\Delta}^{(m_1+\cdots+m_{p-1})})=\Gcc(I_{\Delta}^{(m_1+\cdots+m_p)}),
\]
a contradiction. Hence the claim is true. In view of the desired conclusion, we can assume that $m_i\geqslant 1$ for all $i=1,\ldots,p$. 

Take arbitrary $i$ such that $\langle n_1\alb^1+\cdots+n_p\alb^p,\e_i\rangle\geqslant 1$. Then $\langle \alb^1+\cdots+\alb^p,\e_i\rangle\geqslant 1$. Since $x^{\alb^1+\cdots+ \alb^p} \in \Gcc(I_{\Delta}^{(m_1+\cdots+m_p)})$, by Remark \ref{rem_GEN}, there exists $F\in \Fcc(\Delta)$ such that $i\notin F$ and 
\[
\left \langle \alb^1+\cdots+\alb^p,\sum_{j\notin F}\e_j \right \rangle = m_1+\cdots+m_p.
\]
For all $u=1,\ldots,p$, since $x^{\alb^u} \in I_{\Delta}^{(m_u)}$, 
\[
\left \langle \alb^u,\sum_{j\notin F}\e_j \right \rangle \geqslant m_u.
\]
Thus the equality actually happens for all $u=1,\ldots,p$. This implies that
\[
\left \langle n_1\alb^1+\cdots+n_p\alb^p,\sum_{j\notin F}\e_j \right \rangle = m_1n_1+\cdots+m_pn_p.
\]
Hence by Remark \ref{rem_GEN}, $x^{n_1\alb^1+\cdots +n_p\alb^p}$ is a minimal generator of $I_{\Delta}^{(m_1n_1+\cdots+m_pn_p)}$. The proof is concluded.
\end{proof}

\begin{lem}
\label{lem_upper_bound_omega_via_delta}
For all $n\geqslant 1$, there is an inequality $\omega(I_{\Delta}^{(n)}) \leqslant \delta(I_{\Delta}) n$.
\end{lem}
\begin{proof}
For simplicity, denote $\delta = \delta(I_{\Delta})$. Let $x^{\alb}$ be a minimal generator of $I_{\Delta}^{(n)}$. We may assume that $\alpha_i \geqslant 1$ for $i=1,\ldots, p$ and $\alpha_i = 0$ for $i = p+1,\ldots,r$ for some $1\leqslant p \leqslant r$.
 
 For each $i=1,\ldots, p$, there is a facet $F_i \in \Fcc(\Delta)$ which does not contain $i$ such that $\alb$ lies in the hyperplane $\sum_{j\notin F_i} x_j = n$. From the system \eqref{EQ1} we deduce that the intersection of $\smp_n(I_{\Delta})$ with the set
$$
\begin{cases}
\sum_{j\notin F_i} x_j = n \  & \text{ for } i=1,\ldots,p, \ \\
x_s = 0 \ & \text{ for } s = p+1,\ldots, r, 
\end{cases}
$$
is a compact face of $\smp_n(I_{\Delta})$. 

Since $\alb$ belongs to this face, there is a vertex $\gmb$ of $\smp_n(I_{\Delta})$ lying on this face such that $|\alb| \leqslant |\gmb|$. As $\gmb/n$ is a vertex of $\smp(I_{\Delta})$, $|\alb| \leqslant |\gmb| =  |\gmb/n|\cdot n \leqslant \delta n$. The conclusion follows.
\end{proof}

\begin{exm}
\label{ex_delta_edge_ideals}
Let $G$ be a graph on the vertex set $\{1,\ldots,r\}$. Let
$$
I(G) = (x_ix_j \mid \{i,j\} \in E(G)) \subseteq k[x_1,\ldots,x_r]
$$
be the edge ideal of $G$. Then $\omega(I(G)^{(n)}) =2n$ for all $n\geqslant 1$. In particular, by Theorem \ref{thm_limit_d}, $\delta(I(G))=2$.
 
Indeed, for any $n\geqslant 1$ we have $\omega(I(G)^{(n)}) \leqslant 2n$ by \cite[Corollary 2.11]{B}. On the other hand, if $x_ix_j$ is a minimal generator of $I(G)$, then $(x_ix_j)^n$ is a minimal generator of $I(G)^{(n)}$, and so $\omega(I(G)^{(n)}) \geqslant 2n$. Hence, $\omega(I(G)^{(n)}) = 2n$. 

Of course, $I(G)^{(n)}$ need not be generated in degree $2n$. For example, if $I(G)=(xy,xz,yz)$ then 
$$
I(G)^{(2)}=(x,y)^2\cap (x,z)^2\cap (y,z)^2=(x^2y^2,x^2z^2,y^2z^2,xyz).
$$

We do not know whether for any graph $G$, $\reg I(G)^{(n)}$ is asymptotically linear in $n$. This is the case when $G$ is a cycle (see \cite[Corollary 5.4]{GHOW}). 
\end{exm}

Let $G$ be a graph on $[r]=\{1,\ldots,r\}$. Then the polyhedron $\smp(J(G))$ is defined by the following system of inequalities:
\begin{equation*}
\begin{cases}
x_i+x_j \geqslant 1, \text{ for } \{i,j\} \in E(G),\\
x_1\geqslant 0,\ldots,x_r\geqslant 0.
\end{cases}
\end{equation*}

The following lemma is quite useful to identifying the vertices of $\smp(J(G))$.

\begin{lem}
\label{lem_vertex_smpJG}
Let $G$ be a graph on $[r]$ with no isolated vertex, and $\alb=(\alpha_1,\ldots,\alpha_r) \in \R^r$. Assume that $\alb$ is a vertex of $\smp(J(G))$. Then $\alpha_i\in \{0, 1/2,1\}$ for every $i=1,\ldots,r$. Denote $S_0=\{i: \alpha_i=0\}$, $S_1=\{i: \alpha_i=1\}$ and $S_{1/2}=\{i: \alpha_i=1/2\}$. Then the following statements hold:
\begin{enumerate}[\quad \rm(i)]
 \item $S_0$ is an independent set of $G$.
 \item $S_1=N(S_0)$.
 \item The induced subgraph of $G$ on $S_{1/2}$ has no bipartite component.
 \item If $v$ is a leaf not lying in $S_0$ and $N(v)=\{u\}$ then $u\notin S_1$.
\end{enumerate}
\end{lem}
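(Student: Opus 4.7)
The plan is to characterize vertices of $\smp(J(G))$ via the standard extreme-point criterion: a point $\alb \in \smp(J(G))$ is a vertex if and only if there is no nonzero direction $\v \in \R^r$ such that both $\alb + \epsilon \v$ and $\alb - \epsilon \v$ lie in $\smp(J(G))$ for some $\epsilon > 0$. Each conclusion will be proved by contraposition: assume a violation, then construct such a $\v$. Membership of $\alb \pm \epsilon \v$ in $\smp(J(G))$ reduces to checking (a) nonnegativity $\alpha_i \pm \epsilon v_i \geqslant 0$ and (b) edge constraints $(\alpha_i + \alpha_j) \pm \epsilon(v_i + v_j) \geqslant 1$. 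For small $\epsilon$, both are automatic at every non-tight constraint; the delicate conditions are $v_i = 0$ whenever $\alpha_i = 0$ and $v_i + v_j = 0$ on every tight edge $\{i,j\}$.

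To prove half-integrality, I first note that if $\alpha_j > 1$ then no defining inequality involving coordinate $j$ is tight at $\alb$, so $\v = -\e_j$ provides a valid perturbation; hence $\alpha_i \in [0,1]$ for all $i$. Next I define
\[
v_i = \begin{cases} +1 & \text{if } \alpha_i \in (0,1/2),\\ -1 & \text{if } \alpha_i \in (1/2,1),\\ 0 & \text{if } \alpha_i \in \{0,1/2,1\}. \end{cases}
\]
If some $\alpha_i \notin \{0, 1/2, 1\}$, then $\v \neq 0$. On a tight edge $\{i,j\}$ with $\alpha_i + \alpha_j = 1$, the pair $(\alpha_i, \alpha_j)$ falls into exactly one of the cases $(0,1)$, $(1,0)$, $(1/2,1/2)$, or one coordinate in $(0,1/2)$ and the other in $(1/2,1)$; in every instance $v_i + v_j = 0$. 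Since $v_i = 0$ whenever $\alpha_i = 0$, nonnegativity holds for small $\epsilon$, contradicting that $\alb$ is a vertex.

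For (i), an edge between two vertices of $S_0$ would yield $\alpha_i + \alpha_j = 0 < 1$. For (ii), the inclusion $N(S_0) \subseteq S_1$ follows immediately from the edge constraint; conversely, if $i \in S_1 \setminus N(S_0)$, every neighbor $k$ of $i$ lies in $S_{1/2} \cup S_1$, so each edge at $i$ has sum at least $3/2$, and $\v = \e_i$ gives a perturbation (the hypothesis that $G$ has no isolated vertex ensures $i$ has at least one neighbor, and all constraints on coordinate $i$ are slack). For (iii), suppose $G[S_{1/2}]$ has a bipartite component $H$ with bipartition $(A,B)$. Set $v_i = +1$ on $A$, $v_i = -1$ on $B$, and $v_i = 0$ elsewhere. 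Edges inside $H$ run between $A$ and $B$, so $v_i + v_j = 0$; any edge from $H$ to a vertex $k \notin A \cup B$ must have $k \in S_1$, because $k \in S_0$ contradicts the edge constraint as in (i), while $k \in S_{1/2}$ would place $k$ in the same component $H$. Such edges then have slack at least $1/2$. For (iv), if $u \in S_1$ were the unique neighbor of a leaf $w \notin S_0$, then $\alpha_w \geqslant 1/2$ and the only defining constraints on coordinate $w$ are $x_w \geqslant 0$ and $x_w + x_u \geqslant 1$, both strictly slack at $\alb$; hence $\v = \e_w$ is a valid perturbation direction.

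I expect the main obstacle to be the case analysis ensuring $v_i + v_j = 0$ on every tight edge during the half-integrality step, since one must enumerate all configurations of $(\alpha_i, \alpha_j)$ without missing a case. Once half-integrality is in hand, the four combinatorial claims fall out uniformly from the same perturbation framework, with each violation producing an explicit direction in which $\alb$ fails to be extreme.
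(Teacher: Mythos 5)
Your proposal is correct; every perturbation you construct is a legitimate certificate of non-extremality, and the case analysis on tight edges in the half-integrality step is exhaustive (the five configurations $(0,1)$, $(1,0)$, $(1/2,1/2)$, and the two mixed open-interval cases cover all of $[0,1]^2$ with coordinate sum $1$). The route differs from the paper's mainly in how half-integrality is obtained: the paper invokes the characterization of a vertex as the \emph{unique} solution of $r$ linearly independent tight constraints and then propagates values along paths in the tight-edge subgraph $H$, using the parity of path lengths to force $\alpha_j\in\{0,1\}$ on components meeting a tight nonnegativity constraint and $\alpha_j=1/2$ on the rest; you instead use the classical fractional-vertex-cover perturbation $v_i=+1$ on $(0,1/2)$, $v_i=-1$ on $(1/2,1)$, $v_i=0$ otherwise, which cancels on every tight edge. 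Your argument is shorter and more self-contained, at the cost of the finer structural information the paper's Step 1 extracts (namely \emph{which} coordinates take the value $1/2$, in terms of components of $H$), information the lemma itself does not need. For parts (i)--(iv) the two proofs are essentially identical: your symmetric perturbations $\alb\pm\epsilon\v$ are exactly the paper's convex decompositions $\alb=\tfrac12(\alb'+\alb'')$ with $\alb'=\alb-\tfrac12\v$, $\alb''=\alb+\tfrac12\v$ (e.g.\ the $0/1$ swap on the bipartition $(A,B)$ in (iii) and the $\e_v$-direction in (iv)). One small remark: in your proof of the reverse inclusion $S_1\subseteq N(S_0)$ the no-isolated-vertex hypothesis is not actually needed for the perturbation $\v=\e_i$ to work, since an isolated $i$ has only the slack constraint $x_i\ge 0$; the hypothesis is simply part of the standing assumptions.
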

\begin{proof}  
Since $\alb$ is a vertex of  $\smp(J(G))$, by \cite[Formula (23), Page 104]{S}, $\alb$ is the unique solution of a system
\begin{equation*}
\begin{cases}
x_i+x_j = 1, \text{ for } \{i,j\} \in E_1,\\
x_i = 0, \text{ for } i\in V_1,
\end{cases}
\end{equation*}
of exactly $r$ linearly independent equations, where $E_1\subseteq E(G)$ and $V_1 \subseteq \{1,\ldots, r\}$ with $|E_1|+|V_1| = r$. 

\textsf{Step 1:} Let $H$ be the subgraph of $G$ with the same vertex set and $E(H)=E_1$. Let $H_1,\ldots, H_s$ be connected components of $H$. Assume that $V(H_i) \cap V_1 \ne \emptyset$ for $i=1,\ldots,t$; and $V(H_i) \cap V_1=\emptyset$ for $i=t+1,\ldots,s$ for some $0\leqslant t \leqslant s$. We show that $\alpha_j \in \{0,1\}$ if $j\in \bigcup_{i=1}^t V(H_i)$ and $\alpha_j=1/2$ if $j\in \bigcup_{i=t+1}^s V(H_i)$.

For each $i\in \{1,\ldots,t\}$ and each $j\in H_i$, we take $p\in V(H_i) \cap V_1$. Then $\alpha_p = 0$ by the assumption. Since $H_i$ is connected, there is a path from $p$ to $j$ in $H_i$, say 
$$p = j_0, j_1,\ldots,j_m = j.$$
Since $\alpha_{j_u}+\alpha_{j_{u+1}}=1$ for $u=0,\ldots,m-1$, we deduce that $\alpha_{j_m} = \begin{cases}
                                                                                          0, & \text{if $m$ is even},\\
                                                                                          1, & \text{if $m$ is odd.}
                                                                                         \end{cases}$
For each $u=t+1,\ldots,s$, from the above discussion, the system
\begin{equation*}
\begin{cases}
x_i+x_j = 1,\\
\{i,j\} \in E(H_u),
\end{cases}
\end{equation*}
also has a unique solution. As $V(H_u)\cap V_1=\emptyset$, $H_u$ cannot be an isolated vertex, so $E(H_u) \neq \emptyset$. Since $x_i = 1/2$ for all $i\in V(H_u)$ is a solution of the last system, it is the unique one. Hence we see that $\alpha_i\in \{0,1,1/2\}$ for all $i$.

\textsf{Step 2:} If there are adjacent vertices $i,j\in S_0$ then as $\alb\in \smp(J(G))$, we get $0=\alpha_i+\alpha_j\ge 1$. This is a contradiction. Hence $S_0$ is an independent set, proving (i).

\textsf{Step 3:} Similarly there can be no edge connecting any $i\in S_0$ with some $j\in S_{1/2}$. Hence $N(S_0) \subseteq S_1$.

Now assume that $S_1$ has a vertex, say $i$, that is not adjacent to any vertex in $S_0$. Then $\gmb = \alb -\frac{1}{2}\e_i$ is a point of $\smp(J(G))$. On the other hand, $\alb +\frac{1}{2}\e_i$ is obviously a point of $\smp(J(G))$. Hence we have a convex decomposition
$$
\alb = \frac{1}{2}(\alb -\e_i/2) + \frac{1}{2}(\alb +\e_i/2),
$$
contradicting the fact that $\alb$ is a vertex of $\smp(J(G))$. Thus, as $G$ has no isolated vertex, every vertex in $S_1$ is adjacent to one in $S_0$, and thus $S_1 \subseteq N(S_0)$. In particular, $S_1=N(S_0)$, proving (ii).

\textsf{Step 4:} Next we show (iii). Assume the contrary, the induced subgraph of $G$ on $S_{1/2}$ has a bipartite component $G_1$. Let $(A,B)$ be the bipartition of $G_1$. Construct the vectors $\alb', \alb''$ as follows: $\alb'_i= \begin{cases} \alb_i &\text{if $i\notin A\cup B$},\\                                                                                                                                                                                                        0, &\text{if $i\in A$},\\                                                                                                                                                                                                            1, &\text{if $i\in B$}                                                                                                                                                                                                                            \end{cases}                                                                                                                                                                                                                            $
and  

$\alb''_i= \begin{cases} \alb_i &\text{if $i\notin A\cup B$},\\                                                                                                                                                                                                            1, &\text{if $i\in A$},\\                                                                                                                                                                                                            0, &\text{if $i\in B$}.                                                                                                                                                                                               \end{cases}                                                                                                                                                                                                  $
         
We show that $\alb',\alb'' \in \smp(J(G))$. Indeed, take an edge $\{i,j\}\in E(G)$. If neither $i$ nor $j$ belong to $A\cup B$, then $\alb'_i+\alb'_j=\alb_i+\alb_j\ge 1$. If exactly one of $i$ and $j$ belongs to $A\cup B$, we can assume that $i$ does. Then $j\in S_1$, since by (ii), $V(G_1) \subseteq S_{1/2} \subseteq V(G)\setminus N(S_0)$. In this case $\alb'_i+\alb'_j=\alb'_i+\alb_j=1+\alb'_i\ge 1$. If both $i$ and $j$ belong to $A\cup B$, then we can assume that $i\in A, j\in B$, so  $\alb'_i+\alb'_j=1$. Hence in any case $\alb'\in \smp(J(G))$, and the same argument works for $\alb''$.

But then the convex decomposition $\alb = (\alb'+\alb'')/2$  shows that $\alb$ is not a vertex of $\smp(J(G))$, a contradiction. Thus (iii) is true.

\textsf{Step 5:} Assume that $u\in S_1$. By part (ii), we get $S_0\neq \emptyset$. Since $v\notin S_0$, either $v\in S_1$ or $v\in S_{1/2}$. If $v\in S_1$ then by (ii), $v\in N(S_0)$, a contradiction with $v$ is a leaf and its unique neighbor is $u\in S_1$. Hence $v\in S_{1/2}$. Define the vectors $\alb^1$, $\alb^2$ as follows:

$$\alb^1_i= \begin{cases} \alb_i &\text{if $i\neq v$},\\                                                                                                                                                                                                        0, &\text{if $i=v$},                                                                                                                                                                                                                           \end{cases}                                                                                                                                                                                                                            $$
and 
$$\alb^2_i= \begin{cases} \alb_i &\text{if $i\neq v$},\\                                                                                                                                                                                                            1, &\text{if $i=v$}.                                                                                                                                                                                             \end{cases}                                                                                                                                                                                                     $$
Since $\alb^2 \ge \alb$ componentwise, $\alb^2\in \smp(J(G))$. We show that $\alb^1\in \smp(J(G))$. Take any edge $\{i,j\} \in E(G)$. If $i\neq v$ and $j\neq v$, then $\alb^1_i+\alb^1_j=\alb_i+\alb_j \ge 1$. If say $i=v$, then necessarily $j=u$, and
\[
\alb^1_i+\alb^1_j=\alb^1_v+\alb^1_u=0+\alb_u=1,
\]
noting that $u\in S_1$. Hence $\alb^1\in \smp(J(G))$. But then the convex decomposition $\alb = (\alb^1+\alb^2)/2$  shows that $\alb$ is not a vertex of $\smp(J(G))$, a contradiction. Thus (iv) is true and the proof is concluded.
\end{proof}

The first main result of this section is
\begin{thm} 
\label{thm_delta_JG} Let $G$ be a graph on $[r]$ without isolated vertices, and $J = J(G)$. Then there are equalities
\begin{align}
&\delta(J) = \label{eq_deltaJ}\\
&\max\left\{ |N(S)| + \frac{\left|G\setminus N[S]\right|}{2} \mid S \in \Delta(G) \text{ and } G\setminus N[S] \text{ has no bipartite component} \right\} \nonumber\\
&=\frac{r}{2} + \max\left\{ \frac{|N(S)| - |S|}{2} \mid S \in \Delta(G) \text{ and } G\setminus N[S] \text{ has no bipartite component}\right\}.\nonumber 
\end{align}

\end{thm}
\begin{proof} Let $d$ be the expression in the last line of \eqref{eq_deltaJ}. Clearly $d$ equals the expression on the second line of \eqref{eq_deltaJ}, as $r=|G|=|S|+|N(S)|+\left|G\setminus N[S]\right|$.

\textsf{Step 1:} We show that $d\le \delta(J)$. 

Let $S$ be an independent set of $G$ such that $d = r/2+(|N(S)|-|S|)/2$ and $G\setminus N[S]$ has no bipartite component.

For $i=1,\ldots,r$, define $\gamma_i$ as follows
$$\gamma_i = 
\begin{cases}
0  & \text{ if } i \in S,\\
1 & \text{ if } i \in N(S),\\
\frac{1}{2} & \text{ if } i \in V(G) \setminus N[S].
\end{cases}
$$
Let $\gmb = (\gamma_1,\ldots,\gamma_r)$. Then $\gmb$ is a point of $\smp(J)$. Since $2\gmb \in \N^r$, $x^{2\gmb} \in J(G)^{(2)}$. Observe that $x^{2\gmb}$ is a minimal generator of $J(G)^{(2)}$, since $G$ has no isolated vertex. Hence $|2\gmb|\leqslant 2\delta(J)$ by Lemma \ref{lem_upper_bound_omega_via_delta}, namely  $\delta(J) \geqslant |\gmb| = d$.

\textsf{Step 2:} To prove the reverse inequality, let $\alb = (\alpha_1,\ldots,\alpha_r)$ be any vertex of $\smp(J)$.  By Lemma \ref{lem_vertex_smpJG}, $\alpha_i \in \{0,1/2,1\}$ for every $i$. Let $S=S_0 = \{i \mid \alpha_i = 0\}$, $S_1 = \{i\mid \alpha_i = 1\}$ and $S_{1/2} = \{i\mid \alpha_i = 1/2\}$. By the same lemma, $S\in \Delta(G)$ and $G\setminus N[S]$ has no bipartite component.

Thus 
$$
|\alb| = |S_1| +\frac{|S_{1/2}|}{2}= \frac{|S|+|S_1|+|S_{1/2}|}{2}+\frac{|S_1|-|S|}{2}=\frac{r}{2}+\frac{|N(S)|-|S|}{2}\leqslant d.
$$
Choosing the vertex  $\alb$ such that $|\alb|=\delta(J)$, we deduce $\delta(J) \leqslant d$, as required.
\end{proof}
Denote by $\tau_{\max}(G)$ the maximal cardinality of a minimal vertex cover of $G$. Since the minimal monomial generators of $J(G)$ correspond to the minimal vertex covers of $G$, there is an equality $\omega(J(G))=\tau_{\max}(G)$.
\begin{cor}
\label{cor_bounds_for_deltaJG}
Let $G$ be a graph on $[r]$ without isolated vertices. Then there are inequalities 
$$
\max\left\{\tau_{\max}(G),\frac{r}{2}\right \} \le \delta(J(G)) \le \max\left\{\tau_{\max}(G),\frac{r}{2},\frac{r+\tau_{\max}(G)-3}{2}\right \}.
$$
\end{cor}
\begin{proof}
By Lemma \ref{lem_upper_bound_omega_via_delta} for $n=1$, $\tau_{\max}(G)=\omega(J(G))\le \delta(J(G))$.

We note that $x_1x_2\cdots x_r \in J(G)^{(2)}$. It is a minimal generator of $J(G)^{(2)}$, since $G$ has no isolated vertex. Hence again by Lemma \ref{lem_upper_bound_omega_via_delta},
\[
r \le \omega(J(G)^{(2)}) \le 2\delta(J(G)),
\]
namely $r/2 \le \delta(J(G))$. This yields the inequality on the left.

For the inequality on the right, take any independent set $S$ of $G$ such that $G\setminus N[S]$ has no bipartite component. We have to show that
\begin{equation}
\label{eq_ineq_bounding_above_deltaJG}
\frac{r}{2}+\frac{|N(S)|-|S|}{2} \le  \max\left\{\tau_{\max}(G),\frac{r}{2},\frac{r+\tau_{\max}(G)-3}{2}\right\}.
\end{equation}
If $S=\emptyset$ then the left-hand side is $r/2$. If $G\setminus N[S]=\emptyset$, then $r=|N(S)|+|S|$. In this case the left-hand side of \eqref{eq_ineq_bounding_above_deltaJG} is $|N(S)| \le \tau_{\max}(G)$, since $N(S)$ is now a minimal vertex cover of $G$.

Assume that $S$ and $G\setminus N[S]$ are both non-empty. Let $H$ be a connected component of $G\setminus N[S]$, then by the assumption on $S$, $H$ is neither an isolated point, nor bipartite. Thus $|V(H)|\ge 3$. As a connected graph, $H$ has then a minimal vertex cover $W$ of size at least 2. For this, note that if $H$ has a minimal vertex cover of a singleton $u$, then $H=N_H[u]$, and  $N_H(u)$ is an independent set. In turn, this implies that $N_H(u)$ is a minimal vertex cover of $H$ of size $|V(H)|-1\ge 2$.

Let $W'$ be a minimal vertex cover of $G\setminus N[S]$ containing $W$. Then $N(S)\cup W'$ is a minimal vertex cover of $G$ (the minimality holds since $S$ is an independent set). Thus
\[
|N(S)| \le  \tau_{\max}(G)-|W'| \le \tau_{\max}(G)-2.
\]
Consequently, using the fact that $S\neq \emptyset$,
\[
\frac{r}{2}+\frac{|N(S)|-|S|}{2} \le \frac{r}{2}+\frac{\tau_{\max}(G)-2-1}{2}=\frac{r+\tau_{\max}(G)-3}{2}.
\]
This finishes the proof of \eqref{eq_ineq_bounding_above_deltaJG}, and that of the corollary.
\end{proof}
\begin{rem}
Computations with Macaulay2 \cite{GS} show that for any graph $G$ without isolated vertex on $r\le 8$ vertices, the equality $\delta(J(G))=\tau_{\max}(G)$ holds. In particular, for such graphs, $\delta(J(G))=\max\{\tau_{\max}(G),|V(G)|/2\}$.

The corona graph $G=\cg(K_3,2)$ in Figure \ref{fig_corona} has 9 vertices, and $\delta(J(G))=9/2> \tau_{\max}(G)=4$, hence again $\delta(J(G))=\max\{\tau_{\max}(G),|V(G)|/2\}$.

In general, both inequalities in Corollary \ref{cor_bounds_for_deltaJG} are strict, see Example \ref{ex_strict_inequalities}.
\end{rem}

We will see later in Lemma \ref{lem_deg_delta_corKms} a family of graphs for which the difference $\delta(J(G))-\tau_{\max}(G) $ can be arbitrarily large. On the other hand, for large classes of graphs, the equality $\delta(J(G))=\tau_{\max}(G)$ does hold. We say that $G$ is an \emph{unmixed} graph if every minimal vertex cover of $G$ has the same size. Equivalently, $G$ is unmixed if and only if every associated prime ideal of $J(G)$ has the same height. We say that $G$ is \emph{claw-free} if it does not contain the complete bipartite graph $K_{1,3}$ as an induced subgraph.

\begin{figure}[ht]
\centering
 \includegraphics[scale=0.7]{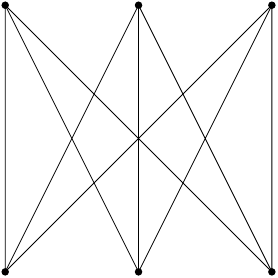}
\caption{The graph $K_{3,3}$ is bipartite, unmixed, but not claw-free}
\label{fig_K33}
\end{figure}

The second main result of this section is
\begin{thm}
\label{thm_delta_JG_equal_taumaxG} 
Let $G$ be a graph without isolated vertices, that satisfies either of the following properties:
\begin{enumerate}[\quad \rm(1)]
 \item bipartite;
 \item unmixed;
 \item claw-free.
\end{enumerate}
Then there are equalities $\delta(J(G))=\omega(J(G))=\tau_{\max}(G)$.
\end{thm}
The proof uses the following lemma, that is inspired by work of Seyed Fakhari \cite[Theorem 3.2]{SF2}.
\begin{lem}
\label{lem_sufficient_delta_JG_equal_taumaxG}
Let $\Hcc$ be a family of graphs with the following properties:
\begin{enumerate}[\quad \rm (i)]
 \item for every $G\in \Hcc$ and every vertex $x\in G$, the graph $G\setminus N_G[x]$ also belongs to $\Hcc$;
 \item for every $G\in \Hcc$ without isolated vertices, the inequality $\tau_{\max}(G)\ge \frac{|V(G)|}{2}$ holds.
\end{enumerate}
Then for every $G\in \Hcc$ without isolated vertices, the equality $\delta(J(G))=\tau_{\max}(G)$ holds.
\end{lem}
\begin{proof}
By Corollary \ref{cor_bounds_for_deltaJG}, it remains to show that for any $G\in \Hcc$ without isolated vertices, $\delta(J(G)) \le \tau_{\max}(G)$. We use the formula of Theorem \ref{thm_delta_JG}.

Let $S$ be an independent set of $G$ such that $S'=G\setminus N[S]$ has no bipartite component. Applying successively property (i) of $\Hcc$, $S'$ also belongs to $\Hcc$. Since $S'$ has no bipartite component, it has no isolated vertices. In particular, property (ii) guarantees the existence of a minimal vertex cover $W$ of $S'$ with cardinality $\ge |S'|/2$.

It is a general fact that when $S$ is an independent set, and $W$ is a minimal vertex cover of $G\setminus N[S]$, $W\cup N(S)$ is a minimal vertex cover of $G$. In our case, $W\cup N(S)$ has cardinality $\ge |N(S)|+|S'|/2$. This shows that
\[
\tau_{\max}(G) \ge |N(S)|+|S'|/2=|N(S)|+\frac{|G\setminus N[S]|}{2}.
\]
Taking supremum over all $S$, we deduce $\tau_{\max}(G) \ge \delta(J(G))$. The proof of the lemma is concluded.
\end{proof}
\begin{proof}[Proof of Theorem \ref{thm_delta_JG_equal_taumaxG}]
We check that each of the families of graphs in Theorem \ref{thm_delta_JG_equal_taumaxG} satisfies the two conditions in Lemma \ref{lem_sufficient_delta_JG_equal_taumaxG}. This was done respectively in the proofs of Theorems 3.4, 3.6 and 3.7 in \cite{SF2}.
\end{proof}
The following result gives a family of graphs $G$ with the property $\delta(J(G))=\tau_{\max}(G)$ not covered by Theorem \ref{thm_delta_JG_equal_taumaxG}. Recall that for two graphs $G$ and $H$, their \emph{join} $G*H$ is the graph with vertices $V(G) \bigsqcup V(H)$ and edges $E(G) \bigsqcup E(H) \bigsqcup \{\{x,y\}: x\in V(G), y\in V(H)\}$. Proposition \ref{prop_non_bipartite_delta_JG_equal_taumaxG} says that for ``most'' sparse enough graphs $G$, $G$ is an induced subgraph of a graph $G_m$ having $\delta(J(G_m))=\tau_{\max}(G_m)$, but having none of the properties bipartite, unmixed, and claw-free.
\begin{prop}
\label{prop_non_bipartite_delta_JG_equal_taumaxG}
Let $G$ be a graph without isolated vertices on $[r]$, where $r\ge 8$. Assume that $\tau_{\max}(G)\le r-4$, equivalently, every \textup{maximal} independent set of $G$ has size at least $4$. Let $m$ be any integer such that
\[
3\le m \le  \min \left\{\frac{r-\tau_{\max}(G)+3}{2}, \frac{r}{2}\right\}.
\]
Let $O(m)$ be the graph on $\{y_1,\ldots,y_m\}$ with no edges. Denote by $G_m$ the join of $G$ and $O(m)$. Then: 
\begin{enumerate}[\quad \rm (1)]
 \item  $G_m$ is a connected graph, but has none of the properties bipartite, unmixed, and claw-free;
 \item $\delta(J(G_m))=\tau_{\max}(G_m)=r$.
\end{enumerate}
\end{prop}
\begin{rem}
The hypothesis $\tau_{\max}(G)\le r-4$ of Proposition \ref{prop_non_bipartite_delta_JG_equal_taumaxG} is satisfied, for example, if $r=8$, and $G=K_{4,4}$, which has $\tau_{\max}(K_{4,4})=4$.  It is also satisfied if $r\ge 10$ and every vertex of $G$ has degree strictly less than $\frac{r}{3}-1$ (more concretely, the cycle of length $r$ has this property). Indeed, in that case, we show that $G$ has no maximal independent set $S$ of size $3$. Assume the contrary, that $S=\{x,y,z\}$ is a maximal independent set of size $3$. Then as $G$ has no isolated vertex, each of the $r-3$ vertices of $G \setminus S$ is adjacent to an element of $S$. This implies that $S$ has a vertex of degree at least $\frac{r}{3}-1$, a contradiction.
\end{rem}
\begin{proof}[Proof of Proposition \ref{prop_non_bipartite_delta_JG_equal_taumaxG}]
Denote $V(G)=\{x_1,\ldots,x_r\}$. 

(1) Clearly $G_m$ is connected as every two non-adjacent vertices of $G_m$ can be connected via either $y_1$ or $x_1$. Since $r\ge 2$, $G$ has at least one edge $x_ix_j$, $i\neq j$. Hence having the odd cycle $x_i,x_j,y_1$, $G$ is not bipartite.

Observation: $G_m$ has only two types of minimal vertex covers: $\{x_1,\ldots,x_r\}$, and $W\cup \{y_1,\ldots,y_m\}$, where $W$ is a minimal vertex cover of $G$.

The vertex cover $\{x_1,\ldots,x_r\}$ has cardinality $r >m+\tau_{\max}(G)$. The last inequality holds since from the hypotheses 
$$
m+\tau_{\max}(G) \le  \frac{r+\tau_{\max}(G)+3}{2} < \frac{r+(r-3)+3}{2}=r. 
$$ 
Hence $G_m$ is not unmixed and
\begin{equation}
\label{eq_tauGm}
\tau_{\max}(G_m)= r.
\end{equation}

As $m\ge 3$, $G_m$ contains the claw $x_1,y_1,y_2,y_3$, as desired.

(2) The equality $\tau_{\max}(G_m)= r$ is \eqref{eq_tauGm}.  By Corollary \ref{cor_bounds_for_deltaJG}, it remains to show that $\delta(J(G_m)) \le r$. Let $S$ be an independent set of $G_m$ such that $G_m \setminus N_{G_m}[S]$ has no bipartite component. Being independent, $S$ cannot intersect non-trivially with both $\{x_1,\ldots,x_r\}$ and $\{y_1,\ldots,y_m\}$. There are three cases.

\textbf{Case 1:} $S=\emptyset$. Using the formula of Theorem \ref{thm_delta_JG}, we get the value
\[
\frac{m+r}{2}+\frac{|N_{G_m}(S)|-|S|}{2}=\frac{m+r}{2} \le r.
\]
The inequality holds because $m\le r/2 <r$.

\textbf{Case 2:} $\emptyset \neq S\subseteq \{y_1,\ldots,y_m\}$. In this case, $N_{G_m}(S)=\{x_1,\ldots,x_r\}$. So $G_m \setminus N_{G_m}[S]$ is a subset of $\{y_1,\ldots,y_m\}$, and it has no bipartite component. In particular, it must be empty, so that $S=\{y_1,\ldots,y_m\}$. The formula of Theorem \ref{thm_delta_JG} yields the value
\[
\frac{m+r}{2}+\frac{|N_{G_m}(S)|-|S|}{2} = \frac{m+r}{2}+\frac{r-m}{2}=r.
\]

\textbf{Case 3:} $\emptyset \neq S\subseteq \{x_1,\ldots,x_r\}$. In this case $S$ is an independent set of $G$, $N_{G_m}(S)=N_G(S)\cup \{y_1,\ldots,y_m\}$. Hence $G \setminus N_G[S]=G_m \setminus N_{G_m}[S]$ has no bipartite component. The formula of Theorem \ref{thm_delta_JG} yields the value
\begin{align*}
\frac{m+r}{2}+\frac{|N_{G_m}(S)|-|S|}{2}&=\frac{m+r}{2}+\frac{m+|N_{G}(S)|-|S|}{2}\\
                                        &=m+\frac{r}{2}+\frac{|N_{G}(S)|-|S|}{2} \\
                                         &\le m+\delta(J(G)).
\end{align*}
Using Corollary \ref{cor_bounds_for_deltaJG}, we further get
\begin{align*}
 m+\delta(J(G)) &\le m+\max\left\{\tau_{\max}(G),\frac{r}{2},\frac{r+\tau_{\max}(G)-3}{2}\right \} \\
                &\le r,
\end{align*}
thanks to the hypothesis
\[
m \le  \min \left\{\frac{r-\tau_{\max}(G)+3}{2}, \frac{r}{2}\right\}.
\]
Hence in any case,  $\delta(J(G_m)) \le r$. The proof is concluded.
\end{proof}

The third main result in this section is
\begin{thm}
\label{thm_gen_degree_symbolicpowers} 
Let $G$ be a graph on $[r]$ with no isolated vertex, and $J = J(G)$. Then 
\begin{enumerate}[\quad \rm(1)]
\item $\omega(J^{(2s)}) = \delta(J)2s$ for every $s\geqslant 1$.
\item There exists $m_2 \in \Gcc(J)$ such that for some $m_1\in \Gcc(J^{(2)})$ satisfying $\deg(m_1)=2\delta(J)$, we have $m_1m_2 \in \Gcc(J^{(3)})$. Let $e$ be the maximal degree of such an $m_2$. Then  $$\omega(J^{(2s+1)}) = \delta(J)2s + e, \  \text{ for every } s\geqslant \omega(J)-e.$$ 
\item If $\delta(J) = \omega(J)$ or $\delta(J) = r/2$, then with the notation of part \textup{(2)}, we have $e=\omega(J)$. In particular, $\omega(J^{(2s+1)}) = \delta(J) 2s + \omega(J)$  for all $s \geqslant 0$.
\end{enumerate}
\end{thm}
It is crucial for the proof that the symbolic Rees algebras of cover ideals of graphs are generated in degree at most 2.

\begin{thm}[Herzog-Hibi-Trung {\cite[Theorem 5.1]{HHT}}]
\label{thm_symbolpower_JG} Let $G$ be a graph and $J = J(G)$. Then for every $s\geqslant 1$,
\begin{enumerate}[\quad \rm(1)]
\item $J^{(2s)} = (J^{(2)})^s$.
\item $J^{(2s+1)} = J(J^{(2)})^s$.
\end{enumerate}
\end{thm}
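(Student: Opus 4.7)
The inclusions $(J^{(2)})^s \subseteq J^{(2s)}$ and $J(J^{(2)})^s \subseteq J^{(2s+1)}$ follow immediately from the standard containment $J^{(a)}\cdot J^{(b)}\subseteq J^{(a+b)}$, so the content of the theorem is the reverse inclusions. By a straightforward induction on $s$, and with the convention $J^{(0)}=R$, both parts will follow from the single key claim
\[
J^{(n+2)} \subseteq J^{(2)} \cdot J^{(n)} \qquad \text{for every } n\geq 0.
\]
Thus the theorem reduces to this inclusion.

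Since the claim is an inclusion of monomial ideals, it reduces to the following combinatorial statement, using the description $\smp_m(J(G))\cap \N^r = \{\alpha\in\N^r : \alpha_i+\alpha_j\ge m \text{ for all edges } \{i,j\}\}$. For every minimal monomial generator $x^{\alpha}$ of $J^{(n+2)}$, I must produce $\beta\in \N^r$ with $\beta \le \alpha$ componentwise, $\beta_i+\beta_j\ge 2$ for every edge, and $\beta_i+\beta_j\le \alpha_i+\alpha_j - n$ for every edge; then $x^{\beta}\in J^{(2)}$ and $x^{\alpha-\beta}\in J^{(n)}$ give the factorization. Edgewise the target interval $[2,\alpha_i+\alpha_j-n]$ is nonempty since $\alpha_i+\alpha_j \ge n+2$, so the whole difficulty is global integer feasibility.

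I would construct $\beta$ by analyzing the subgraph $H$ of \emph{tight edges} $E(H)=\{\{i,j\}\in E(G):\alpha_i+\alpha_j=n+2\}$. By the minimality criterion (Remark \ref{rem_GEN}), $\mathrm{supp}(\alpha)\subseteq V(H)$. On any tight edge the equality $\beta_i+\beta_j=2$ is forced, which propagates on each connected component $K$ of $H$ to exactly two admissible patterns: $\beta\equiv 1$ on $V(K)$, or a $\{0,2\}$-alternating assignment, the latter forcing $K$ to be bipartite. A parity argument drives the case split: if $n$ is odd then $n+2$ is odd, so on any tight edge $\alpha_i$ and $\alpha_j$ have opposite parity and $H$ is bipartite; if $n$ is even, odd-cycle components of $H$ are possible but summing the equations $\alpha_i+\alpha_j=n+2$ around an odd cycle forces $\alpha\ge 1$ on the whole component, so $\beta\equiv 1$ remains available there. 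For a bipartite component $K$ containing a vertex with $\alpha=0$, propagation along tight edges gives that one side of the bipartition is entirely $\alpha=0$ and the other entirely $\alpha=n+2$, so setting $\beta=0$ on the first and $\beta=2$ on the second respects $\beta\le\alpha$. The half-integer vertex classification of Lemma \ref{lem_vertex_smpJG} should provide the correspondence between these local patterns and the doubled vertices $2v_k\in \{0,1,2\}^r$ of $\smp_2(J(G))$.

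The main obstacle will be the \emph{global} selection of patterns across components, together with maintaining $\beta_i+\beta_j\ge 2$ on non-tight edges $\{i,j\}\in E(G)\setminus E(H)$. The key structural observation is that a non-tight edge with $j\notin V(H)$ forces $\alpha_j=0$ and hence $\alpha_i\ge n+2\ge 2$, so $\beta_i=2$ is forced; this propagates a consistent forced choice of the ``$2$-side'' on each bipartite component that abuts $V(G)\setminus V(H)$, and the structural constraints (i)--(iv) of Lemma \ref{lem_vertex_smpJG} should ensure that these forcings never conflict. On tightly connected components with $\alpha\ge 1$ throughout, $\beta\equiv 1$ is always safe. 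Assembling these local decisions yields the required $\beta$, establishing the key claim and hence the theorem.
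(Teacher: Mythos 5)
The paper does not prove this statement: it is quoted directly from Herzog--Hibi--Trung \cite[Theorem 5.1]{HHT}, so there is no internal argument to compare yours against, and what you are really attempting is a self-contained proof of that cited result. Your reduction is correct: the containments $(J^{(2)})^s\subseteq J^{(2s)}$ and $J(J^{(2)})^s\subseteq J^{(2s+1)}$ are standard, the single inclusion $J^{(n+2)}\subseteq J^{(2)}J^{(n)}$ for all $n\geqslant 0$ does yield both parts by induction, that inclusion is in fact true, and your translation into the existence of a suitable $\beta$ with $\beta\le\alpha$, $\beta_i+\beta_j\ge 2$ and $\beta_i+\beta_j\le\alpha_i+\alpha_j-n$ on every edge is the right combinatorial formulation. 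The local analysis of tight components (all-ones versus $\{0,2\}$-alternating, the parity and odd-cycle observations) is also correct.

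The gap is exactly where you flag it: the global assembly. You assert that the forced patterns on different components ``should'' never conflict, citing Lemma \ref{lem_vertex_smpJG}; but that lemma concerns vertices of the polyhedron $\smp(J)$, not arbitrary lattice points of $\smp_{n+2}(J)$, so it cannot be invoked as stated, and you never verify the upper constraint $\beta_i+\beta_j\le\alpha_i+\alpha_j-n$ on non-tight edges for the assembled $\beta$ (for instance, an edge both of whose endpoints receive $\beta=2$ requires $\alpha_i+\alpha_j\ge n+4$). The gap is fillable, but the cleanest fix discards the tight-edge machinery entirely. Given any $\alpha\in\N^r$ with $\alpha_i+\alpha_j\ge n+2$ on every edge, set $S_0=\{i:\alpha_i=0\}$ and define $\beta_i=0$ for $i\in S_0$, $\beta_i=2$ for $i\in N(S_0)$, and $\beta_i=1$ otherwise. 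Then $\beta\le\alpha$ because every $j\in N(S_0)$ has $\alpha_j\ge n+2\ge 2$; $\beta$ is a cover of order $2$ because $S_0$ is independent and every edge meeting $S_0$ has its other endpoint in $N(S_0)$; and $\alpha-\beta$ is a cover of order $n$, since on an edge with $\beta$-sum $2$ one has $\alpha_i+\alpha_j-2\ge n$, on an edge with $\beta$-sum $3$ the endpoint in $N(S_0)$ contributes $\alpha\ge n+2$ and the other contributes $\alpha\ge 1$, and on an edge with $\beta$-sum $4$ both endpoints lie in $N(S_0)$ so $\alpha_i+\alpha_j-4\ge 2n\ge n$. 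This works for every $\alpha$, with no minimality hypothesis, and completes your argument.
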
 

For ease of reference, we record here an immediate corollary of this theorem.

\begin{cor}  
Let $G$ be a graph. Then $\reg J(G)^{(n)}$ is a quasi-linear function of $n$ of period at most $2$ for $n$ large enough.
\end{cor}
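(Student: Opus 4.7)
The plan is to split the sequence $\{\reg J^{(n)}\}$ into the subsequences indexed by even and odd $n$, and to apply the classical asymptotic linearity of regularity to each of them separately, invoking the decomposition supplied by Theorem~\ref{thm_symbolpower_JG}. Set $L=J^{(2)}$, so that Theorem~\ref{thm_symbolpower_JG} gives $J^{(2s)}=L^s$ and $J^{(2s+1)}=J\cdot L^s$ for every $s\ge 1$.

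For the even subsequence, $L^s$ is an ordinary power of the homogeneous ideal $L$, so by the theorem of Cutkosky-Herzog-Trung \cite{CHT} and Kodiyalam \cite{K}, $\reg J^{(2s)}=\reg L^s$ is a linear function of $s$ for $s\gg 0$. For the odd subsequence, consider the Rees algebra $\Rcc(L)=\bigoplus_{s\ge 0}L^s t^s\subseteq R[t]$ and the graded $\Rcc(L)$-submodule $M=\bigoplus_{s\ge 0}(J\cdot L^s)t^s$ of $R[t]$. Since $M$ is generated in degree zero by a finite generating set of $J$, it is a finitely generated graded $\Rcc(L)$-module. The module-theoretic form of the Cutkosky-Herzog-Trung theorem (proven for finitely generated graded modules over standard graded finitely generated $R$-algebras in \cite{CHT}) then yields that $\reg J^{(2s+1)}=\reg(J\cdot L^s)$ is a linear function of $s$ for $s\gg 0$.

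Combining the two asymptotic linear estimates, one obtains rational numbers $a_0,a_1,b_0,b_1$ and some $n_0\in\N$ such that $\reg J^{(n)}=a_0n+b_0$ for all even $n\ge n_0$ and $\reg J^{(n)}=a_1n+b_1$ for all odd $n\ge n_0$, which is exactly quasi-linearity of period at most $2$. The only subtlety lies in the odd case: since $J\cdot L^s$ is not a power of any single ideal, one must invoke the module version of the Cutkosky-Herzog-Trung theorem rather than the ideal version used in the even case; everything else is a direct application of Theorem~\ref{thm_symbolpower_JG}.
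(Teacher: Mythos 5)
Your proof is correct and follows essentially the same route as the paper: the paper also combines the decomposition $J^{(2s)}=(J^{(2)})^s$, $J^{(2s+1)}=J(J^{(2)})^s$ from Theorem~\ref{thm_symbolpower_JG} with the asymptotic linearity of $\reg(I^nM)$ for a finitely generated graded module $M$ (citing Trung--Wang \cite[Theorem 3.2]{TW}, which is the module-level statement you invoke for the odd subsequence). The only cosmetic difference is attribution: the module version you need is due to Kodiyalam/Trung--Wang rather than to \cite{CHT} itself.
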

\begin{proof} Follows from Theorem \ref{thm_symbolpower_JG} and \cite[Theorem 3.2]{TW}.
\end{proof}

\begin{proof}[Proof of Theorem \ref{thm_gen_degree_symbolicpowers}]
(1) By Lemma \ref{lem_upper_bound_omega_via_delta},  $\omega(J^{(2s)})\le \delta(J)2s$.

For the reverse inequality, let $\alb=(\alpha_1,\ldots,\alpha_r)$ be a vertex of $\smp(J)$ such that $\delta(J) = |\alb|$. By \cite[Formula 23 in Page 104]{S}, $\alb$ is a unique solution of a system of the type
$$
\begin{cases}
x_i+x_j = 1, \text{ for } \{i,j\} \in E_1,\\
x_i = 0, \text{ for } i\in V_1,
\end{cases}
$$
where $E_1\subseteq E(G)$ and $V_1 \subseteq \{1,\ldots, r\}$ with $|E_1|+|V_1| = r$.  By Lemma \ref{lem_vertex_smpJG}, $\alpha_i \in \{0,1/2,1\}$ for every $i$.

Since $2s\alb \in \N^r$, we get $x^{2s\alb} \in J(G)^{(2s)}$. Note that $2s\alb$ is a vertex of $\smp_{2s}(J)$, so $x^{2s\alb}$ is a generator of $J(G)^{(2s)}$. It follows that $\omega(J^{(2s)}) \geqslant 2s|\alb| = \delta(J)2s$, as desired.

(2) Let $I = J^{(2)}$.  By Theorem \ref{thm_symbolpower_JG} we have $J^{(2s+1)} = I^s J$. Note that $\omega(I) = 2\delta(J)$ by part (1) above. Therefore, we can write $I = I_1+I_2$ where $I_2$ is generated by elements of $\Gcc(I)$ of degree exactly $2\delta(J)$ and $I_1$ is  generated by the remaining elements.

Since $J^{(3)}=IJ$, the first assertion of (2) reduces to the following

\textbf{Claim}: $I_2J \not\subseteq I_1J$. 

Indeed, if $I_2J \subseteq I_1J$, we will derive a contradiction. Since $IJ = (I_1+I_2)J = I_1J+I_2J = I_1J$, for every $n\geqslant 1$, from this equality and Theorem \ref{thm_symbolpower_JG} we get $J^{(2n+1)} = I^n J = I_1^nJ$. In particular, $\omega(J^{(2n+1)}) \leqslant \omega(I_1)n + \omega(J)$, so $\omega(I_1) \geqslant 2\delta(J)$ by Theorem \ref{thm_limit_d}. On the other hand, $\omega(I_1)  < 2\delta(J)$ by the definition of $I_1$, a contradiction.

We now return to proving part (2). By the claim, there exist $m_1 \in \Gcc(I)$ with $\deg(m_1)=2\delta(J)$ and $m_2 \in \Gcc(J)$ such that 
$m_1m_2\in \Gcc(IJ)$. Among all such couples $(m_1,m_2)$, choose one such that $e=\deg(m_2)$ is maximal. By Lemma \ref{lem_raising_power_mingens}, $m_1^s m_2 \in \Gcc(J^{(2s+1)})$. In particular, 
\begin{equation}
\label{eq_lowerbound_dJ2s+1}
\omega(J^{(2s+1)}) \geqslant \deg(m_1)s+\deg(m_2)=\delta(J)2s+e.
\end{equation}
It remains to show that the equality occurs whenever  $s \geqslant \omega(J)-e$.

Fix an $s \geqslant \omega(J)-e$. As mentioned above $J^{(2s+1)} = I^sJ$, so any minimal generator of $\Gcc(J^{(2s+1)})$ must have the form $g_1g_2\cdots g_s f$, where $g_i$ is a minimal generator of $I=J^{(2)}$ and $f$ is a minimal generator of $J$. Let $g=g_1\cdots g_s$. We can choose $g_i,f$ such that $\deg(gf)=\omega(J^{(2s+1)})$.

From Lemma \ref{lem_raising_power_mingens}, $g_if$ is a minimal generator of  $J^{(3)}=IJ$ and $g_i^sf\in \Gcc(J^{(2s+1)})$ for all $i$. Assume that $\deg g_1\leqslant \cdots\leqslant \deg g_s$. Then
\begin{align*}
\omega(J^{(2s+1)}) &= \deg(gf)=\deg g_1+\cdots+\deg g_s + \deg f\\
              &\leqslant s \deg g_s+\deg f \leqslant \omega(J^{(2s+1)}),
\end{align*}
so that $\deg g_1=\cdots=\deg  g_s$ and
\begin{equation}
\label{eq_deg_gf}
\omega(J^{(2s+1)})=\deg(gf) = s \deg(g_i)+\deg(f) \ \text{ for } i =1,\ldots,s.
\end{equation}
 If $\deg(g_1) = 2\delta(J)$, then thanks to \eqref{eq_deg_gf}, we get the inequality $\deg(f)\ge e$. The latter is necessarily an equality by the definition of $e$. In this case, $\omega(J^{(2s+1)}) = \delta(J)2s+e$. 

Assume that $\deg(g_1) < 2\delta(J)$. Since $s \geqslant \omega(J)-e \geqslant \deg(f) - \deg(m_2)$, we have
\begin{align*}
\delta(J)2s + \deg(m_2) &\geqslant (\deg(g_1)+1)s+ \deg(f)-s\\
                        &= \deg(g_1)s +\deg(f) = \omega(J^{(2s+1)}),
\end{align*}
so thanks to \eqref{eq_lowerbound_dJ2s+1}, $\omega(J^{(2s+1)})=  \delta(J)2s+e$, as required.

(3) If $\delta(J) = \omega(J)$, then there exists $m\in \Gcc(J)$ of degree $\delta(J)$. By Lemma \ref{lem_raising_power_mingens}, $m^3$ is a minimal generator of $J^{(3)}$, so we can choose $m_1=m^2, m_2=m$ and $e=\deg(m)=\delta(J)=\omega(J)$. 

If $\delta(J) = r/2$, then for $\alb = (1,\ldots,1)\in \N^r$, $x^{\alb} \in \Gcc(I)$ and $|\alb|=r=2\delta(J)$. Let $x^{\gmb}\in \Gcc(J)$ be such that $|\gmb| = \omega(J)$. Observe that $x^{\alb+\gmb}\in \Gcc(J^{(3)})$. Clearly $x^{\alb+\gmb} \in IJ\subseteq J^{(3)}$. For any $1\le i\le r$, we need to show that $x^{\alb+\gmb-\e_i} \notin J^{(3)}$. Note that $\gmb_j\in \{0,1\}$ for all $j=1,\ldots,r$. 

If $\gmb_i=1$, since $x^{\gmb}\in \Gcc(J)$, there is an edge $ij\in E(G)$ such that $\gmb_i+\gmb_j=1$. But then $\alb_i+\alb_j+\gmb_i+\gmb_j=3$, hence $x^{\alb+\gmb-\e_i} \notin J^{(3)}$. 

If $\gmb_i=0$, for any edge $ij\in E(G)$, we get $\alb_i+\alb_j+\gmb_i+\gmb_j=3$. Again $x^{\alb+\gmb-\e_i} \notin J^{(3)}$. Therefore we always have $x^{\alb+\gmb}\in \Gcc(J^{(3)})$. Hence we can choose $m_1=x^{\alb}, m_2=x^{\gmb}$ and again $e=\deg(m_2)=\omega(J)$.
\end{proof}

The following example shows that $\omega(J(G)^{(2n+1)})$ need not be a linear function in $n$ from $n =  0$, and the number $e$ in Theorem \ref{thm_gen_degree_symbolicpowers}(2) can be strictly smaller than $\omega(J)$.

\begin{exm} 
\label{ex_strict_inequalities}
Let $G$ be a graph with the vertex set 
$$
\{x_i,y_i,z_i \mid i=1,\ldots,5\}~  \cup ~ \{u,v,w\}
$$ 
depicted in Figure \ref{fig_18vertices}. Using the \textsf{EdgeIdeals} package in Macaulay2 \cite{GS}, the graph $G$ and its cover ideal are given as follows.
\begin{verbatim}
R=ZZ/32003[x_1..x_5,y_1..y_5,z_1..z_5,u,v,w];
G=graph(R,{x_1*x_2,x_1*x_3,x_1*x_4,x_1*x_5,x_2*x_3,x_2*x_4,
x_2*x_5,x_3*x_4,x_3*x_5,x_4*x_5,x_1*y_1,x_1*z_1,x_2*y_2,x_2*z_2,
x_3*y_3,x_3*z_3,x_4*y_4,x_4*z_4,x_5*y_5,x_5*z_5,x_3*u,x_4*u,y_5*u,
u*v,u*w,v*w});
J=dual edgeIdeal G
\end{verbatim}
In particular, $G$ has 18 vertices and 26 edges.

\begin{figure}[ht]
\centering
\includegraphics[scale=0.7]{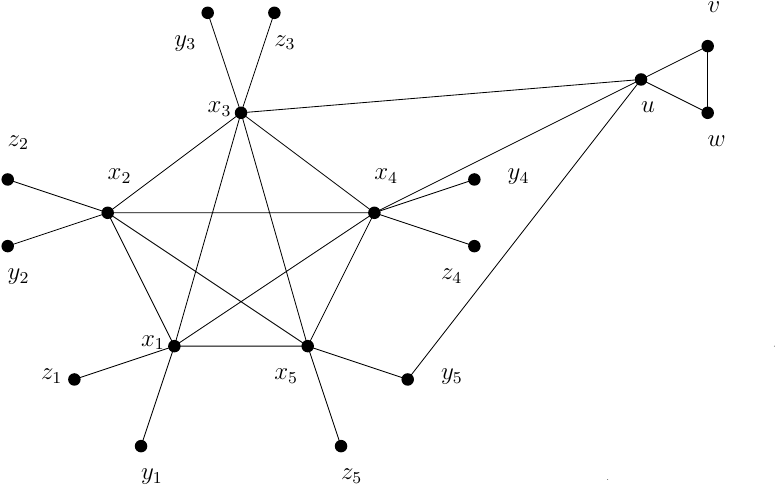}
\caption{The graph $G$}
\label{fig_18vertices}
\end{figure}
\end{exm}

Let $J = J(G)$. By using Macaulay2 \cite{GS} we get 
\begin{enumerate}
\item $\omega(J) = 9$, $\omega(J^{(2)}) = 19$, and $\omega(J^{(3)}) = 27$. By Theorem \ref{thm_gen_degree_symbolicpowers}, $\delta(J) = 19/2$.
\item The monomials $m_1 = u^2v^2\prod_{i=1}^5(x_i y_iz_i) \in \Gcc(J^{(2)})$ and 
$$
m_2=x_2x_3x_4x_5y_1z_1uv \in \Gcc(J)
$$ 
satisfy $m_1m_2 \in \Gcc(J^{(3)})$. Note that $\deg(m_1)=19, \deg(m_2)=8$.
\end{enumerate}
In the notation of Theorem \ref{thm_gen_degree_symbolicpowers}, we deduce $8\le e \le \omega(J)=9$. If $e=9$, then by \emph{ibid.} we have $\omega(J^{(2n+1)}) = \delta(J)2n+9 = 19n+9$ for $n\geqslant \omega(J) - 9 = 0$. Setting $n=1$, we get $\omega(J^{(3)})=28$, a contradiction. Hence $e=8$ and $\omega(J^{(2n+1)}) = 19n+8$ if (and only if) $n\geqslant \omega(J) - 8 = 1$.

Moreover, observe that both inequalities of Corollary \ref{cor_bounds_for_deltaJG} are strict in this case:
\begin{align*}
&\max\left\{\tau_{\max}(G),\frac{|V(G)|}{2}\right \} =\tau_{\max}(G)=9,\\
&\delta(J(G))=19/2,\\
& \max\left\{\tau_{\max}(G),\frac{|V(G)|}{2},\frac{|V(G)|+\tau_{\max}(G)-3}{2}\right \}=12.
\end{align*}

\section{The Koszul property of symbolic powers of  cover ideals}
\label{sect_Koszulproperty}

The following result is our main tool in the study of the Koszul property of symbolic powers.
\begin{thm}
\label{thm_ysplit}
Let $(R,\mm)$ be a standard graded $k$-algebra. Let $x$ be a non-zero linear form and $I', T$ be non-trivial homogeneous ideals of $R$ such that the following conditions are fulfilled:
\begin{enumerate}[\quad \rm(i)]
 \item $I'$ is a Koszul module and $x$ is $I'$-regular \textup{(}e.g. $x$ is an $R$-regular element\textup{)},
 \item $T\subseteq \mm I'$,
 \item $x$ is a regular element with respect to $R/T$ and $\gr_{\mm}T$.
 \end{enumerate}
Denote $I=xI'+T$. Then the decomposition $I=xI'+T$ is a Betti splitting, and there is a chain
$$
\lind_R T \le \lind_R I = \lind_R(T+(x))=\lind_R (T/xT) \le \lind_R T+1.
$$
Moreover, $I$ is a Koszul module if and only if $T$ is so.
\end{thm}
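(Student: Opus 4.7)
The plan is as follows. First, I would verify two preliminary facts. From condition (iii) I would deduce that $x$ is $T$-regular: a nonzero $t \in T$ with $xt = 0$ would, upon taking $n$ with $t \in \mm^n T \setminus \mm^{n+1} T$, produce a nonzero class in $\gr_\mm T$ killed by $x$, contradicting (iii). From $x$ being regular on $R/T$ one obtains $xI' \cap T = xT$: if $a \in I'$ satisfies $xa \in T$, then $x \bar{a} = 0$ in $R/T$ forces $\bar{a} = 0$, so $a \in T$ and $xa \in xT$.

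Second, I would establish the Betti splitting via Lemma \ref{lem_criterion_Bettisplit}(ii). Since $T \subseteq \mm I'$, the inclusion $xT \hookrightarrow xI'$ has image in $x \mm I' = \mm(xI')$; likewise $xT \hookrightarrow T$ has image in $\mm T$ since $x \in \mm$. The Koszulness of $I'$ combined with the $\mm$-adic regularity (iii) allow these ``$\mm$-factorizations'' to be promoted to minimal chain lifts, forcing the induced maps $\Tor^R_i(k, xT) \to \Tor^R_i(k, xI')$ and $\Tor^R_i(k, xT) \to \Tor^R_i(k, T)$ to vanish for all $i$. By Lemma \ref{lem_criterion_Bettisplit}(iii), the mapping cone of minimal resolutions of $xT \to xI' \oplus T$ is then a minimal free resolution of $I$.

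Third, I would deduce the chain of (in)equalities. The bound $\lind_R T \le \lind_R I$ is immediate, since the minimal resolution of $T$ sits as a summand of the mapping cone resolution of $I$, so the linear part of $F(T)$ injects into the linear part of $F(I)$. For $\lind_R I = \lind_R(T+(x))$, I would apply the same Betti splitting argument to $T + (x) = xR + T$ (which satisfies $xR \cap T = xT$ by the same reasoning), and observe that $F(xI')$ and $F((x))$ are both Koszul pieces shifted down by one; in the respective mapping cones they contribute only at homological degree zero, so the linear-part homology above degree zero depends only on $F(T)$ and $F(xT)$, which agree in the two splittings. The equality $\lind_R(T+(x)) = \lind_R(T/xT)$ follows from the short exact sequence
\[
0 \to (x) \to T+(x) \to T/xT \to 0,
\]
since $(x) \cong R(-1)$ is free and the associated Tor long exact sequence gives $\Tor^R_i(k, T+(x)) \cong \Tor^R_i(k, T/xT)$ for all $i \ge 1$. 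Finally, $\lind_R(T/xT) \le \lind_R T + 1$ follows from the short exact sequence $0 \to T(-1) \xrightarrow{x} T \to T/xT \to 0$ (using that $x$ is $T$-regular) together with the standard mapping cone estimate for linearity defects.

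For the ``moreover'' part, one direction is immediate from $\lind_R T \le \lind_R I$. For the converse, I would argue that when $T$ is Koszul, condition (iii) ensures that the mapping cone of $T(-1) \xrightarrow{x} T$ is itself a minimal free resolution of $T/xT$; its linear part is then the mapping cone of the identity on the linear part of $F(T)$, which is contractible, giving $\lind_R(T/xT) = 0$, and then $\lind_R I = 0$ by the chain of equalities. The main obstacle will be in the second step: extending the obvious $\Tor_0$-vanishing to higher Tor requires constructing minimal chain lifts of the inclusions $xT \hookrightarrow xI'$ and $xT \hookrightarrow T$. This is where condition (iii) enters crucially, supplying the Artin--Rees-type control on the $\mm$-adic filtration of $T$ needed to minimize the chain lifts at every homological degree.
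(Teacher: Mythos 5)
Your overall skeleton (Betti splitting via Tor\,-vanishing for $xT\to xI'\oplus T$, then a chain of (in)equalities extracted from short exact sequences and the Koszul complex $0\to T(-1)\xrightarrow{x}T\to 0$) matches the paper's, but several of your key steps do not actually go through as stated. The most serious problem is your justification of $\lind_R(T+(x))=\lind_R(T/xT)$: from $0\to (x)\to T+(x)\to T/xT\to 0$ with $(x)$ free you conclude $\Tor^R_i(k,T+(x))\cong \Tor^R_i(k,T/xT)$ for $i\ge 1$ and then infer equality of linearity defects. Isomorphism of Tor modules (equivalently, equal graded Betti numbers) does not determine the linearity defect, which depends on the differentials of the minimal resolution. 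The paper instead applies Lemma \ref{lem_relativeext} to the sequences $0\to xI'\to I\to T/xT\to 0$ and $0\to xR\to T+(x)\to T/xT\to 0$, and the real work --- which your proposal omits entirely --- is verifying the hypothesis $xI'\cap\mm^sI=\mm^s(xI')$ for all $s\ge 1$. That verification is precisely where condition (iii) enters, through the equivalence of Lemma \ref{lem_regular_lin_form}: $x$ being $\gr_\mm T$-regular gives $\mm^sT:x=\mm^{s-1}T$, hence $xI'\cap\mm^sT\subseteq x\mm^{s-1}T\subseteq \mm^s(xI')$. Relatedly, your claim that $F(xI')$ and $F((x))$ ``contribute only at homological degree zero'' is false: a Koszul module such as $I'=\mm$ has a long minimal resolution; what is true is that its linear part is exact in positive homological degrees, and turning that into the equality $\lind_R I=\lind_R(T+(x))$ requires the argument above, not a direct comparison of mapping cones.

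Two further gaps. First, in the Betti splitting step, your reason for the vanishing of $\Tor^R_i(k,xT)\to\Tor^R_i(k,T)$ --- that the image of $xT$ lies in $\mm T$ --- is insufficient: for a non-Koszul target, an inclusion $M\subseteq\mm N$ need not induce zero on higher Tor (try $\mm^3\subseteq\mm\cdot(x^2,y^2)$ in $k[x,y]$). The correct argument, used in the paper, is that under the isomorphism $xT\cong T(-1)$ (valid because $x$ is $T$-regular) the inclusion becomes multiplication by $x$, which acts as zero on the $k$-vector spaces $\Tor^R_i(k,T)$; the map into $xI'$ is handled separately using the Koszulness of $I'$ (\cite[Lemma 4.10(b1)]{NgV2}). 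You flag this as ``the main obstacle'' but do not supply the mechanism. Second, your derivation of $\lind_R T\le\lind_R I$ from ``$F(T)$ sits as a summand of the mapping cone'' is not justified: $F(T)$ is a subcomplex, not a direct summand of complexes, and homology of the linear part of a subcomplex need not inject. The paper obtains this inequality instead from the Iyengar--R\"omer estimate $\lind_R T\le\lind_R K(x;T)=\lind_R(T/xT)$ combined with $\lind_R(T/xT)=\lind_R I$; since you already invoke the same mapping cone estimate for the upper bound $\lind_R(T/xT)\le\lind_R T+1$, this particular gap is repairable, but the other two require the missing intersection computations and the multiplication-by-$x$ argument.
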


Before proving Theorem \ref{thm_ysplit}, we recall the following result.
\begin{lem}[Nguyen {\cite[Theorem 3.1]{Ng}}]
\label{lem_relativeext}
Let $0\to M'\to P' \to N' \to 0$ be a short exact sequence of non-zero finitely generated $R$-modules where
\begin{enumerate}[\quad\rm(i)]
\item $M'$ is a Koszul module;
\item $M'\cap \mm P'=\mm M'$.
\end{enumerate}
Then there are inequalities $\lind_R P'\le \lind_R N' \le \max\{\lind_R P',1\}$. In particular, $\lind_R N'=\lind_R P'$ if $\lind_R P'\ge 1$ and $\lind_R N'\le 1$ if $\lind_R P'=0$.

Moreover, $\lind_R N'=0$ if and only if $\lind_R P'=0$ and for all $s\ge 1$, we have $M'\cap \mm^sP'=\mm^sM$.
\end{lem}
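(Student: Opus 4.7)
The plan is to derive the theorem from three short exact sequences, analysing each via Lemma \ref{lem_relativeext} together with the fact that multiplication by $x$ annihilates $\Tor^R(k,-)$.

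\emph{Preliminaries.} I first observe that $x$ is $R$-regular: if $xr = 0$, then $xr \in T$, so $r \in T$ by the $R/T$-regularity of $x$ in (iii); but then $r \in T \subseteq \mm I' \subseteq I'$ and $I'$-regularity of $x$ (in (i)) forces $r = 0$. The same argument shows $xI' \cap T = xT$ and $(x) \cap T = xT$. Consequently $xI' \cong I'(-1)$ is Koszul by (i), and $(x) \cong R(-1)$ is free, hence Koszul.

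\emph{The sequences for $I$ and $T+(x)$.} The preliminaries give two short exact sequences
\[
0 \to xI' \to I \to T/xT \to 0 \quad \text{and} \quad 0 \to (x) \to T+(x) \to T/xT \to 0.
\]
Lemma \ref{lem_relativeext} will apply to both. The condition $xI' \cap \mm I = \mm \cdot xI'$ follows from (ii) together with $xT \subseteq x \mm I' = \mm \cdot xI'$; the condition $(x) \cap \mm(T+(x)) = \mm(x)$ is checked directly using the $R/T$-regularity of $x$ and $T \subseteq \mm$. The lemma then sandwiches each of $\lind_R I$ and $\lind_R(T+(x))$ between $\lind_R(T/xT)$ and $\max\{\cdot,1\}$, yielding $\lind_R I = \lind_R(T+(x)) = \lind_R(T/xT)$ (handling the $\lind = 0$ and $\ge 1$ cases separately).

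\emph{The sequence for $T$.} Since $T \subseteq I'$ and $x$ is $I'$-regular, $x$ is also $T$-regular, so
\[
0 \to T(-1) \xrightarrow{\;\cdot x\;} T \to T/xT \to 0
\]
is exact. Because $x \in \mm$ and $\Tor^R_i(k,T)$ is a $k$-vector space, multiplication by $x$ vanishes on Tor, and the long exact sequence splits into
\[
0 \to \Tor^R_i(k, T) \to \Tor^R_i(k, T/xT) \to \Tor^R_{i-1}(k, T)(-1) \to 0.
\]
This gives $\beta_i(T/xT) = \beta_i(T) + \beta_{i-1}(T)$ and the bound $\lind_R T \le \lind_R(T/xT) \le \lind_R T + 1$. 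The $\gr_\mm T$-regularity of $x$ from (iii) is exactly the classical hypothesis guaranteeing that linearity defect is preserved under quotienting by a linear form regular on the associated graded module, sharpening the upper bound to $\lind_R(T/xT) = \lind_R T$.

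\emph{Betti splitting, conclusion, and main obstacle.} By Lemma \ref{lem_criterion_Bettisplit}, to prove $I = xI' + T$ is a Betti splitting it suffices to show that $\Tor^R_i(k, xT) \to \Tor^R_i(k, xI')$ and $\Tor^R_i(k, xT) \to \Tor^R_i(k, T)$ both vanish. Each inclusion, precomposed with the isomorphism $T \xrightarrow{\sim} xT$ given by multiplication by $x$, factors through multiplication by $x$ on the target (via $I' \cong xI'$ for the first map, or directly on $T$ for the second), and this operation annihilates $\Tor$. Combined with the previous two steps, this yields the displayed chain and the Koszul equivalence. The technical heart is the equality $\lind_R(T/xT) = \lind_R T$ in the third step: without the $\gr_\mm T$-regularity in (iii), the bound $\lind_R I \le \lind_R T + 1$ is not tight enough to deduce the direction "$T$ Koszul $\Rightarrow I$ Koszul" from the chain, so condition (iii) is indispensable precisely here.
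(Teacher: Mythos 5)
Your proposal does not prove the statement in question. The statement is Lemma \ref{lem_relativeext}: for an \emph{arbitrary} short exact sequence $0\to M'\to P'\to N'\to 0$ of non-zero finitely generated modules with $M'$ Koszul and $M'\cap\mm P'=\mm M'$, one has $\lind_R P'\le \lind_R N'\le\max\{\lind_R P',1\}$, together with the criterion that $\lind_R N'=0$ if and only if $\lind_R P'=0$ and $M'\cap\mm^sP'=\mm^sM'$ for all $s\ge 1$. What you have written is instead a proof of Theorem \ref{thm_ysplit} (the Betti splitting and Koszulness of $I=xI'+T$), and it explicitly invokes Lemma \ref{lem_relativeext} as a black box when treating the sequences $0\to xI'\to I\to T/xT\to 0$ and $0\to(x)\to T+(x)\to T/xT\to 0$. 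Relative to the assigned statement the argument is therefore circular: the very result to be established is used as the key tool, and nothing in your text addresses the general inequalities comparing $\lind_R N'$ with $\lind_R P'$, nor the role of the higher-order conditions $M'\cap\mm^sP'=\mm^sM'$ in characterizing when $N'$ is Koszul.

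For what it is worth, the paper itself offers no proof of this lemma either: it is recalled verbatim from Nguyen \cite[Theorem 3.1]{Ng} as an external ingredient, so there is no internal argument to compare against. A genuine proof would have to work at the level of minimal free resolutions (or the linear-part filtration defining $\lind$) of the abstract modules $M'$, $P'$, $N'$, using the hypothesis $M'\cap\mm P'=\mm M'$ to control how a minimal resolution of $P'$ maps onto one of $N'$; none of the ideal-theoretic manipulations in your write-up (regularity of $x$, the identity $xI'\cap T=xT$, the Koszul complex on $x$, the Betti splitting) are applicable to that general setting. Separately, even read as a proof of Theorem \ref{thm_ysplit}, your third step asserts $\lind_R(T/xT)=\lind_R T$ from the $\gr_\mm T$-regularity of $x$, whereas the paper only obtains (and only needs) the two-sided bound $\lind_R T\le\lind_R(T/xT)\le\lind_R T+1$ from \cite[Remark 2.12]{IyR}, supplemented by \cite[Theorem 2.13(a)]{IyR} for the Koszul case; the exact equality you claim is stronger than what the cited results give and is not justified.
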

We also have an easy observation.
\begin{lem}
\label{lem_regular_lin_form}
 Let $(R,\mm)$ be a standard graded $k$-algebra, and $x\in \mm$ a non-zero linear form. Let $T$ be a homogeneous ideal of $R$ such that $x$ is $(R/T)$-regular. Then the following are equivalent:
 \begin{enumerate}[\quad \rm(1)]
  \item $x$ is $\gr_\mm T$-regular,
  \item $\mm^sT:x=\mm^{s-1}T$ for all $s\ge 1$.
 \end{enumerate}
\end{lem}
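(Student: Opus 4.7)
The plan is to unpack condition (1) as an ideal-theoretic statement. Since $\gr_\mm T=\bigoplus_{t\ge 0}\mm^tT/\mm^{t+1}T$ with $\mm^0T=T$, and the class of $x$ acts as a degree-one element of $\gr_\mm R$, condition (1) is equivalent to the assertion: for every $t\ge 0$ and every $y\in \mm^tT$ with $xy\in \mm^{t+2}T$, one has $y\in \mm^{t+1}T$. In terms of colon ideals this reads
$$
(\mm^{t+2}T : x)\cap \mm^tT \subseteq \mm^{t+1}T \quad \text{for every } t\ge 0,
$$
while condition (2) drops the side condition $y\in \mm^tT$ and demands $\mm^sT:x\subseteq \mm^{s-1}T$ outright for $s\ge 1$ (the reverse inclusion being automatic because $x\in\mm$).

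Given this reformulation, the direction (2)$\Rightarrow$(1) is immediate: if $y\in \mm^tT$ and $xy\in \mm^{t+2}T$, apply (2) with $s=t+2$ to obtain $y\in \mm^{t+1}T$.

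For (1)$\Rightarrow$(2) I would proceed by induction on $s\ge 1$. The base case $s=1$ is precisely where the hypothesis that $x$ is $(R/T)$-regular, i.e. $T:x=T$, enters, via the chain
$$
\mm T : x \subseteq T : x = T = \mm^{0}T,
$$
the reverse inclusion being clear. For the inductive step, suppose $s\ge 2$ and $y\in \mm^sT:x$; the base case applied to $xy\in \mm^sT\subseteq T$ already gives $y\in T$. Let $j$ be the largest integer in $\{0,1,\ldots,s-1\}$ such that $y\in \mm^jT$; I want to show $j=s-1$. If $j<s-1$, then $xy\in \mm^sT\subseteq \mm^{j+2}T$, so the reformulation of (1) forces $y\in \mm^{j+1}T$, contradicting the maximality of $j$. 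Hence $y\in \mm^{s-1}T$.

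The argument is essentially routine once the graded/filtered dictionary is set up, and the only conceptual point that requires care is that the degree-zero component of $\gr_\mm T$ is $T$ itself rather than $\mm T$; this is why the base case $s=1$ must appeal to the regularity of $x$ on $R/T$, whereas the inductive steps rely purely on condition (1).
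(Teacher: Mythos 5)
Your proof is correct and follows essentially the same route as the paper: both reformulate $\gr_\mm T$-regularity as $(\mm^{t+2}T:x)\cap\mm^tT\subseteq\mm^{t+1}T$, get (2)$\Rightarrow$(1) immediately, and for (1)$\Rightarrow$(2) anchor the argument at $\mm T:x\subseteq T:x=T$ using the $(R/T)$-regularity of $x$. The only (immaterial) difference is that the paper bootstraps by induction on $s$ via $\mm^{s+1}T:x\subseteq(\mm^{s+1}T:x)\cap\mm^{s-1}T=\mm^sT$, whereas you fix $s$ and climb a maximal $j$ with $y\in\mm^jT$; the two bookkeeping schemes are equivalent.
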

\begin{proof}
Clearly $x$ is $\gr_\mm T$-regular if and only if
\begin{equation}
\label{eq_grmT-regular}
(\mm^{s+2}T:x)\cap \mm^sT= \mm^{s+1}T, \quad \text{for all $s\ge 0$}.
\end{equation}
Hence (2) $\Longrightarrow$ (1).

Conversely, assume that (1) is true. Since $\mm^{s-1}T \subseteq \mm^sT:x$, it suffices to show for all $s\ge 1$ that $\mm^sT:x \subseteq \mm^{s-1}T$. Induct on $s\ge 1$.

For $s=1$, 
\[
\mm T:x \subseteq T:x=T,
\]
where the equality follows from the hypothesis $x$ is $(R/T)$-regular.

Assume that the statement holds true for $s\ge 1$. Using the induction hypothesis, we have
\[
\mm^{s+1}T:x \subseteq (\mm^{s+1}T:x)\cap (\mm^s T:x) \subseteq (\mm^{s+1}T:x) \cap \mm^{s-1}T=\mm^s T.
\]
The equality in the chain follows from \eqref{eq_grmT-regular}. The proof is concluded.
\end{proof}

\begin{proof}[Proof of Theorem \ref{thm_ysplit}]
We proceed through several steps.

\textsf{Step 1:} First we establish the equalities $\lind_R I=\lind_R T/xT=\lind_R (T+(x))$. Consider the short exact sequence
\[
0 \longrightarrow xI' \longrightarrow I \longrightarrow \frac{T}{xI'\cap T}=\frac{T}{xT}\longrightarrow 0.
\]
The equality holds since $T:x=T\subseteq I'$.

We claim that
\begin{equation}
\label{eq_intersect}
xI' \cap \mm^s I=\mm^s xI'  \quad \text{for all $s\ge 1$}.
\end{equation}
The inclusion ``$\supseteq$'' is clear. For the converse inclusion, take $a\in xI'\cap \mm^s I=xI' \cap \mm^s(xI'+T)$. Subtracting to an element in $\mm^s xI'$, we may assume that $a\in xI' \cap \mm^s T$. The last module is contained in 
$$
(x) \cap \mm^sT=x(\mm^sT:x) \subseteq x\mm^{s-1}T \subseteq x\mm^s I'.
$$
In the last chain, the first inclusion follows from Lemma \ref{lem_regular_lin_form} and the hypothesis $x$ is $\gr_\mm T$-regular. The second inclusion follows from the hypothesis $T\subseteq \mm I'$. Thus the claim follows.

Recall that $xI'\cong I'$ is Koszul by the hypothesis. Hence using Lemma \ref{lem_relativeext} for the above exact sequence, and Equality \eqref{eq_intersect}, we get
\[
\lind_R I = \lind_R (T/xT).
\]
Arguing similarly as above for the ideal $T+(x)=xR+T$, we have $\lind_R (T+(x))=\lind_R T/xT$. Hence $\lind_R I=\lind_R (T/xT)=\lind_R (T+(x))$, as claimed.

\textsf{Step 2:} Note that $x$ is $T$-regular, since it is $\gr_\mm T$-regular. Let $K(x;T)$ denote the Koszul complex $0\to T(-1) \xrightarrow{\cdot x} T \to 0$, then $K(x;T)$ is quasi-isomorphic to $T/xT$. Hence using (the graded analogue of) a result of Iyengar and R\"omer \cite[Remark 2.12]{IyR}, we obtain
\[
\lind_R T \le \lind_R K(x;T) =\lind_R (T/xT)\le \lind_R T+1.
\]
Hence we get the desired chain
$$
\lind_R T \le \lind_R I = \lind_R (T/xT) \le \lind_R T+1.
$$

\textsf{Step 3:} For the assertion on Betti splitting, note that $xI'\cap T=xT$. Since $x$ is $T$-regular, then the morphism $\Tor^R_i(k,xT)\longrightarrow \Tor^R_i(k,T)$ is the multiplication by $x$ of $\Tor^R_i(k,T)$, which is trivial. Since $xT\subseteq \mm xI'$ and $xI'$ is Koszul, the map $\Tor^R_i(k,xT)\longrightarrow \Tor^R_i(k,xI')$ is also trivial thanks to \cite[Lemma 4.10(b1)]{NgV2}. Hence by Lemma \ref{lem_criterion_Bettisplit}, the decomposition $I=xI'+T$ is a Betti splitting.

\textsf{Step 4:} As shown above, $\lind_R T\le \lind_R I$, hence if $I$ is Koszul then so is $T$. Conversely, assume that $T$ is Koszul. Now $x$ is $\gr_\mm T$-regular, so by (the graded analogue of) \cite[Theorem 2.13(a)]{IyR}, we deduce that $T/xT$ is also Koszul. It remains to use the equality $\lind_R I=\lind_R T/xT$.
The proof is concluded.
\end{proof}
\begin{exm}
\label{ex_grmT-reg_necessary}
The following example shows that the condition $x$ is $\gr_{\mm} T$-regular in Theorem \ref{thm_ysplit} is critical, even when the base ring is regular.

Let $R=k[a,b,c,d]$, $T=(a,c^2)(b,d^2)=(ab,ad^2,bc^2,c^2d^2)$. Let $x=a-b$,  $I=(x)+T$, $\mm=R_+$. We claim that:
\begin{enumerate}[\quad \rm(i)]
\item $x$ is $(R/T)$-regular but not $\gr_\mm T$-regular,
\item $T$ is Koszul but $I$ is not.
\end{enumerate}

(i): We observe that $T:x=T$ because $T=(a,c^2)\cap (b,d^2)$. We also have
\[
c^2d^2x=c^2(ad^2)-d^2(bc^2) \in \mm^2 T.
\]
Hence $c^2d^2 \in (\mm^2T:x)\setminus (\mm T)$, thus $x$ is not $\gr_{\mm}T$-regular.

(ii):  Write $T=aJ+U$ where $J=(b,d^2),U=c^2(b,d^2)$. Then $J$ is Koszul, $U\subseteq \mm J$ and $U\cong J(-2)$ is Koszul. Applying Corollary \ref{cor_Koszulysplit}, $T$ is also Koszul.

We have
\begin{align*}
I= T+(x) &=(x)+(a^2,ac^2,ad^2,c^2d^2).
\end{align*}
Denote $L=(a^2,ac^2,ad^2,c^2d^2)\subseteq S=k[a,c,d]$. Note that $R=S[x]$, so by Corollary \ref{cor_Koszulysplit} and Lemma \ref{lem_basechange_ld}, $\lind_R I=\lind_R (LR+(x))=\lind_S L$.

Assume that $I$ is Koszul, then so is $L$. Denote by $L_{\le s}$ the ideal generated by homogeneous elements of degree at most $s$ of $L$. Then by \cite[Lemma 8.2.11]{HH2}, we also have $L_{\le 3}=(a^2,ac^2,ad^2) \cong (a,c^2,d^2)(-1)$ is Koszul. In particular, by Lemma \ref{lem_Koszul_gendeg}, $\reg L_{\le 3}=3$.

But then $\reg (a,c^2,d^2)=2$! This contradiction confirms that $I$ is not Koszul.
\end{exm}
\begin{rem}
Example \ref{ex_grmT-reg_necessary} also shows that even if $T$ is a Koszul ideal in a polynomial ring $R$, and $x$ is a regular linear form modulo $T$, the ideal $T+(x)$ need not be Koszul. 

Nevertheless, it is not hard to see that this is true if moreover $T$ has a linear resolution. Indeed, in this case $T\cong \gr_{\mm}T$ as $R$-modules, so $x$ is $\gr_{\mm}T$-regular. Applying Theorem \ref{thm_ysplit}, we get $\lind_R (T+(x))=\lind_R T=0$.
\end{rem}
The next consequence of Theorem \ref{thm_ysplit} generalizes \cite[Lemma 8.2]{NgV2}.
\begin{cor}
\label{cor_Koszulysplit}
Let $(R,\mm)$ be a polynomial ring over $k$. Let $x$ be a non-zero linear form, $I', T$ be non-trivial homogeneous ideals of $R$ such that the following conditions are satisfied:
\begin{enumerate}[\quad \rm(i)]
 \item $I'$ is Koszul,
 \item $T\subseteq \mm I'$,
 \item there exists a polynomial subring $S$ of $R$ such that $R=S[x]$ and $T$ is generated by elements in $S$.
\end{enumerate}
Denote $I=xI'+T$. Then the decomposition $I=xI'+T$ is a Betti splitting and $\lind_R I=\lind_R T$.
\end{cor}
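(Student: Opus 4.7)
The strategy is to reduce Corollary \ref{cor_Koszulysplit} to Theorem \ref{thm_ysplit} by verifying its hypotheses in the polynomial-extension setting, and then to sharpen the resulting inequality chain to an equality. Set $T_0\subseteq S$ to be the ideal generated by the given generators of $T$ (which lie in $S$), so that $T=T_0R$. Hypothesis (i) of Theorem \ref{thm_ysplit} is given, and $x$ is automatically $R$-regular (hence $I'$-regular) since $R$ is a polynomial ring and $x$ a non-zero linear form; hypothesis (ii) is given. For the first half of (iii), the polynomial-extension isomorphism $R/T\cong (S/T_0)[x]$ makes the image of $x$ an independent polynomial variable, so $x$ is $R/T$-regular.

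The crux of the hypothesis check is the $\gr_{\mm}T$-regularity of $x$. By Lemma \ref{lem_regular_lin_form}, this reduces to showing $\mm^{s}T:_R x=\mm^{s-1}T$ for every $s\ge 1$. Using the decompositions $R=\bigoplus_{n\ge 0}S\cdot x^n$ and $T=\bigoplus_{n\ge 0}T_0\cdot x^n$ (the latter from the identification $T\cong T_0\otimes_k k[x]$ of $R$-modules), and writing $\mm=\mm_S R+xR$ where $\mm_S$ is the graded maximal ideal of $S$, a direct calculation gives
\[
\mm^{s}T=\bigoplus_{n\ge 0}\mm_S^{\max(0,\,s-n)}T_0\cdot x^n,
\]
from which the equality $\mm^{s}T:_R x=\mm^{s-1}T$ follows at once by matching $x^n$-coefficients of $f$ and $xf$.

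With all hypotheses in place, Theorem \ref{thm_ysplit} yields both the Betti-splitting assertion and the chain $\lind_R T\le \lind_R I=\lind_R(T/xT)\le \lind_R T+1$. To eliminate the $+1$, note that $T/xT\cong T_0$ as graded $R$-modules, both annihilated by $x$, making $T/xT$ an $S=R/xR$-module. Combining the change-of-rings identity $\lind_R M=\lind_S M$ for $S$-modules $M$ (a consequence of the Koszul-complex-induced splitting $\beta^R_{i,j}(M)=\beta^S_{i,j}(M)+\beta^S_{i-1,j-1}(M)$) with Lemma \ref{lem_basechange_ld} applied to the flat inclusion $S\hookrightarrow R$, we obtain
\[
\lind_R I=\lind_R(T/xT)=\lind_S(T/xT)=\lind_S T_0=\lind_R T,
\]
completing the proof. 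The main technical obstacle is the verification of the $\gr_{\mm}T$-regularity of $x$; the change-of-rings identity for modules killed by a regular linear form is the secondary subtlety.
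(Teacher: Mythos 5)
Your proof is correct and follows essentially the same route as the paper: reduce to Theorem \ref{thm_ysplit} by checking that $x$ is $\gr_{\mm}T$-regular using the decomposition $R=S[x]$ (your coefficientwise computation of $\mm^sT$ is just a more explicit form of the paper's $\mm^s=x\mm^{s-1}+\nn^s$ argument), then identify $\lind_R(T/xT)$ with $\lind_S T$ and finish by flat base change (Lemma \ref{lem_basechange_ld}). The one caveat is that your parenthetical justification of $\lind_R M=\lind_S M$ for $S$-modules $M$ killed by $x$ via the Betti-number splitting is not by itself a proof, since the linearity defect is not determined by Betti numbers; the correct statement is exactly what the paper cites as \cite[Lemma 2.3]{NgV3} (tensoring the minimal $S$-resolution with the Koszul complex on $x$ and passing to linear parts), so the fact you invoke is available.
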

\begin{proof}
First we verify that $x, I',$ and $T$ satisfy the hypotheses of Theorem \ref{thm_ysplit}. Note that condition (iii) ensures that $x$ is $(R/T)$-regular. Hence it remains to check that $x$ is $\gr_\mm T$-regular. By the proof of Lemma \ref{lem_regular_lin_form}, we only need to show that for all $s\ge 1$, 
\[
\mm^s T:x \subseteq \mm^{s-1}T.
\]
Take $a\in \mm^sT:x$.

By change of coordinates, we can assume that $x$ is one of the variables. Let $\nn$ be the graded maximal ideal of $S$ extended to $R$. Then $\mm^s=((x)+\nn)^s=x\mm^{s-1}+\nn^s$, therefore
\[
xa \in \mm^sT=x\mm^{s-1} T+\nn^sT.
\]
So for some $b\in \mm^{s-1}T$, $x(a-b) \in \nn^sT$, namely
\[
a-b\in \nn^sT:x=\nn^sT.
\]
Therefore $a\in \mm^{s-1}T+\nn^sT=\mm^{s-1}T$, as claimed.

That $I=xI'+T$ is a Betti splitting follows from Theorem \ref{thm_ysplit}. 

Regarding $T$ as an ideal of $S$, by Theorem \ref{thm_ysplit}, we also have
\[
\lind_R I = \lind_R T/xT=\lind_R \left(T\otimes_k \frac{k[x]}{(x)}\right)=\lind_S T,
\]
where the last equality holds because of \cite[Lemma 2.3]{NgV3}. Hence $\lind_R I=\lind_R T$, as desired.
\end{proof}

The main result of this section is as follows.  

\begin{thm}
\label{thm_Koszul_symbolicpower_coverideals} 
Let $G$ be the graph obtained by adding to each vertex of a graph $H$ at least one pendant. Then all the symbolic powers of the cover ideal $J(G)$ of $G$ are Koszul.
\end{thm}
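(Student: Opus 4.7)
The plan is to prove the theorem by induction on $|V(H)|$, applying Corollary~\ref{cor_Koszulysplit} at each step to a splitting of $J(G)^{(n)}$ by an appropriately chosen variable.

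I would first dispose of the base cases $|V(H)|\le 1$. When $|V(H)|=0$ the claim is vacuous. When $|V(H)|=1$, $G$ is a star $K_{1,k}$ with centre $u$; an inner induction on $n$ works smoothly, because the decomposition $J(G)^{(n)}=x_u I'+T$ takes the especially clean form $T=(\prod_i x_{v_i}^n)$ (principal, hence Koszul) and
\[
I'=J(G)^{(n)}\!:\!x_u=\bigcap_{i=1}^k (x_u,x_{v_i})^{n-1}=J(G)^{(n-1)},
\]
Koszul by the inner inductive hypothesis. The inclusion $T\subseteq\mm I'$ is witnessed by $\prod_i x_{v_i}^n/x_{v_1}\in I'$, and Corollary~\ref{cor_Koszulysplit} closes the loop.

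For the inductive step ($|V(H)|\ge 2$) I would pick any $u\in V(H)$ and split $J(G)^{(n)}=x_u I'+T$, where $T$ is generated by the minimal generators with $\alpha_u=0$ and $I'=J(G)^{(n)}\!:\!x_u$. A direct unpacking of the edge inequalities shows that each such minimal generator has $\alpha_{v_i}=n$ on every pendant $v_i$ of $u$ and $\alpha_w=n$ on every $w\in N_H(u)$, with the remaining coordinates realising a minimal generator of $J(G_1)^{(n)}$, where $G_1$ is the pendant extension of $H_1:=H[V(H)\setminus N_H[u]]$. Hence
\[
T=\Bigl(\prod_{v_i}x_{v_i}^n\cdot\prod_{w\in N_H(u)}x_w^n\Bigr)\cdot J(G_1)^{(n)},
\]
which is Koszul by the outer induction (note $|V(H_1)|<|V(H)|$) together with Lemma~\ref{lem_basechange_ld}. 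Condition (iii) of Corollary~\ref{cor_Koszulysplit} holds with $S=k[V(G)\setminus\{u\}]$, and $T\subseteq\mm I'$ is checked by observing that dividing a minimal generator $M\cdot m$ of $T$ by $x_{v_1}$ and then multiplying by $x_u$ still satisfies every edge inequality of $J(G)^{(n)}$.

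The expected main obstacle is showing that $I'=J(G)^{(n)}\!:\!x_u$ is itself Koszul. Since
\[
I'=\bigcap_{e\ni u}(x_u,x_*)^{n-1}\cap\bigcap_{e\not\ni u}(x_i,x_j)^n
\]
is not a symbolic power of any cover ideal, I plan to treat it through the filtration $I_m:=J(G)^{(n)}\!:\!x_u^m$ for $m=0,1,\ldots,n$, whose endpoints are $I_0=J(G)^{(n)}$, $I_1=I'$, and $I_n=J(G-u)^{(n)}$; the last is (after ignoring isolated vertices and extending scalars) the symbolic power of the cover ideal of the pendant extension of $H-u$, hence Koszul by outer induction. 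A descending induction on $m$ applied to each splitting $I_{m-1}=x_u I_m+T_m$ then reduces the problem to proving that every $T_m$ is Koszul. These $T_m$'s factor as
\[
T_m=\prod_{v\in N_G(u)}x_v^{\,n-m+1}\cdot Q_m,\qquad Q_m=J(G-u)^{(n)}\!:\!\prod_{v\in N_G(u)}x_v^{n-m+1},
\]
where $Q_m$ is an intersection of powers of $2$-generated monomial primes whose exponents have been "lowered" on the edges incident to $N_G(u)$. I expect to handle $Q_m$ by a second Betti splitting at a further variable, which should reduce it back to ideals already covered by the outer inductive hypothesis (possibly after slightly enlarging the class to include the natural weighted analogues that appear). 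Once the Koszulness of $I'$ is secured, the top-level application of Corollary~\ref{cor_Koszulysplit} yields $\lind_R J(G)^{(n)}=\lind_R T=0$, concluding the induction. The most delicate point of the whole argument is the analysis of the ideals $T_m$ (equivalently $Q_m$) for $m\ge 2$, for which the clean factorisation available at $m=1$ no longer applies.
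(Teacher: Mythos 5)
Your skeleton is sound and in fact runs parallel to the paper's argument: the outer induction on $|V(H)|$, the star base case, and the descending filtration $I_m=J(G)^{(n)}\!:\!x_u^m$ (which, after rewriting $J(G)^{(n)}=(x_u,m_1)^n\cap J(G')^{(n)}$ with $m_1=\prod_{v\in N_G(u)}x_v$ and $G'$ the pendant extension of $H\setminus\{u\}$, is exactly the peeling $(x_u,m_1)^{p}\cap\cdots = x_u\bigl[(x_u,m_1)^{p-1}\cap\cdots\bigr]+\bigl[(m_1^{p})\cap\cdots\bigr]$ that the paper uses). But there is a genuine gap precisely at the point you flag as "the most delicate": the Koszulness of the ideals $T_m$ for $m\ge 2$. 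These are of the form $(\text{monomial})\cap J(G'')^{(n)}$ with the monomial supported on $N_H(u)$ (after factoring out the now-isolated pendants of $u$), and such intersections are \emph{not} covered by your inductive hypothesis, which only asserts Koszulness of symbolic powers themselves. Your remark that a "second Betti splitting at a further variable" should reduce $Q_m$ to the inductive hypothesis "possibly after slightly enlarging the class to include the natural weighted analogues" is exactly the unproven content; without specifying the enlarged class and proving the enlarged statement, the descending induction on $m$ does not close.

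The missing idea — and it is the core of the paper's proof — is to strengthen the statement proved by induction on $|V(H)|$ to: \emph{for every monomial $m$ supported on the vertices of $H$, the ideal $(m)\cap J(G)^{(n)}$ is Koszul}. With this stronger hypothesis your ideals $T_m$ do fall under the induction, but one must then carry the extra factor $(m)$ (and, as it turns out, extra powers of the deleted vertex) through the entire splitting argument. This is the content of the paper's Lemma \ref{Koszul-Products}, a double induction on $p+q$ for ideals $(z,m_1)^p\cap(mz^q)\cap I$, where the hypotheses on the supports of the factors of $m_1=y^\alpha fg$ are what make the bottom term $(m_1^p)\cap(m)\cap I$ factor as a monomial times $\lcm(g^p,m)\cap I$ and what allow the verification $L\subseteq yJ$ needed for the Betti splitting. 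None of this is automatic, and as written your proposal defers this step rather than proves it.
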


First, we need an auxiliary lemma. If $m = x_1^{\alpha_1}\cdots x_r^{\alpha_r}$ is a monomial of $R$, its support is defined by $\supp(m) =\{x_i \mid \alpha_i\ne 0\}$. For a set $B$ of monomials in $R$, set $\supp B = \bigcup_{m\in B} \supp(m)$. 

\begin{lem}\label{Koszul-Products}  Let $I$ be a momomial ideal of $S=k[x_1,\ldots,x_s]$.  Let $1\le t \le s$ be an integer. Assume that for every monomial $m\in S$ with $\supp(m) \subseteq \{x_1,\ldots,x_t\}$, the ideal $(m) \cap I$ is Koszul. Denote $R=S[y,z]$. Let $m_1=y^{\alpha}fg$, where $\alpha\ge 1$ is an integer and $f,g$ are monomials of $S$ satisfying the following conditions:
\begin{enumerate}[\quad \rm(i)]
\item  $\supp(g)\subseteq \{x_1,\ldots,x_t\}$,
\item $\supp f \cap \big(\supp \Gcc(I) \cup \{x_1,\ldots,x_t\} \big)=\emptyset$.
\end{enumerate}
Then  for all monomials $m\in R$ with $\supp(m)\subseteq \{x_1,\ldots,x_t\}$ and all $p, q\geqslant 0$, the ideal $(z,m_1)^p\cap (mz^q) \cap I$ is Koszul.
\end{lem}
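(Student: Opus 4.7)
The proof will proceed by induction on $p$, with the key computational input being an explicit description of the minimal monomial generators of $K := (z,m_1)^p \cap (mz^q) \cap I$. Since $m_1 = y^\alpha f g$ and $\supp(f)$ is disjoint from $\supp(g) \cup \supp(m) \cup \supp\Gcc(I) \subseteq \{x_1,\ldots,x_t\} \cup \supp \Gcc(I)$, one computes that $(z,m_1)^p \cap (mz^q)$ is minimally generated by the monomials $u_k := z^{\max(p-k,q)} y^{\alpha k} f^k \lcm(g^k,m)$ for $k=0,\ldots,p$. Intersecting with $I$ and again using disjointness of $\supp(f)$ from $\supp\Gcc(I)$ yields
\[
K = \sum_{k=0}^p z^{\max(p-k,q)} y^{\alpha k} f^k\, A_k R,
\]
where $A_k := (\lcm(g^k,m)) \cap_S I$ is an ideal of $S$ which is Koszul by the hypothesis (since $\supp(\lcm(g^k,m)) \subseteq \{x_1,\ldots,x_t\}$). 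By Lemma \ref{lem_basechange_ld}, each $A_kR$ is Koszul over $R$ as well.

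The base case $p=0$ gives $K = z^q A_0 R$, a degree shift of a Koszul ideal. For $p \geq 1$ I will distinguish two regimes. First, when $q \geq 1$, every generator of $K$ is divisible by $z$ and, using that $A_p \subseteq A_{p-1}$ (because $g^{p-1} \mid g^p$), the $k=p$ contribution is absorbed by the $k=p-1$ contribution after dividing by $z$; one checks that $K = z K^*$ with $K^* := (z,m_1)^{p-1} \cap (mz^{q-1}) \cap I$ a strictly smaller instance of the lemma. Koszulness of $K$ then follows from the inductive hypothesis applied to $K^*$. Second, when $q=0$ and $p \geq 1$, the only generator of $K$ without $z$ is the $k=p$ piece $y^{\alpha p} f^p A_p R$; splitting $K = zK' + T$ along the linear form $z$ with $T := y^{\alpha p}f^pA_pR$ and $K' := (z,m_1)^{p-1} \cap (m) \cap I$, I will apply Corollary \ref{cor_Koszulysplit}. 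The three hypotheses hold: (i) $K'$ is a smaller instance, hence Koszul by induction; (ii) $T \subseteq \mm K'$, because for each $v \in \Gcc(I)$, setting $v_k := \lcm(g^k,m,v) \in A_k$, the divisibility $v_{p-1} \mid v_p$ gives
\[
y^{\alpha p}f^p v_p \;=\; (y^\alpha f \cdot v_p / v_{p-1}) \cdot y^{\alpha(p-1)} f^{p-1} v_{p-1},
\]
with the first factor in $\mm$ since $\alpha \geq 1$; and (iii) $T$ is generated in $S[y]$ while $R = S[y][z]$, realizing the subring condition. The corollary then yields $\lind_R K = \lind_R T$, and $T$ is a shift of the Koszul module $A_pR$, finishing the induction.

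The main obstacle is pinpointing the right splitting and checking its hypotheses. The decomposition $K = zK'+T$ succeeds precisely because (a) the disjoint supports of $f$ from everything else keep the generator computation clean, so $K$ organizes neatly into the $p+1$ layers indexed by $k$, (b) the divisibility $g^{p-1} \mid g^p$ forces $A_p \subseteq A_{p-1}$, which is the crucial input both for the containment $T \subseteq \mm K'$ and for the absorption argument in the case $q \geq 1$, and (c) the broad hypothesis on $(m) \cap_S I$ for arbitrary monomials $m$ supported in $\{x_1,\ldots,x_t\}$ is exactly what makes each $A_k$ Koszul, delivering both the recursion and the final Koszulness of $T$ via Corollary \ref{cor_Koszulysplit}.
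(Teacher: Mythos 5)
Your proof is correct and follows essentially the same strategy as the paper: the key step in both is the splitting $K=zK'+T$ with $K'=(z,m_1)^{p-1}\cap(m)\cap I$ and $T=(m_1^p)\cap(m)\cap I=y^{\alpha p}f^p\bigl(\lcm(g^p,m)\cap I\bigr)$, resolved by Corollary \ref{cor_Koszulysplit}. The only differences are cosmetic — the paper inducts on $p+q$ and disposes of $q\ge 1$ by factoring out $z^q$ all at once rather than one $z$ at a time, and it does not need your explicit layer-by-layer generator formula (which is correct, though the $u_k$ need not be \emph{minimal} generators; fortunately you only use that they generate).
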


\begin{proof} We prove by induction on $p+q$. If $p+q= 0$, then $p=q=0$. In this case, the conclusion holds true by the assumption.

Assume that $p+q \geqslant 1$. If $p \leqslant q$, then  we have 
$$(z,m_1)^p \cap (m z^q) \cap I = (mz^q) \cap I.$$
Since $(mz^q) \cap I = z^q((m)\cap I)$, we have $ (mz^q) \cap I$ is Koszul.

Assume that $p > q$. Consider two cases.

\textsf{Case 1}: $q = 0$. We have
\begin{align*}
(z,m_1)^p &\cap (m) \cap I =  (z,m_1)^p \cap ((m) \cap I)\\
&=(z(z,m_1)^{p-1}+(m_1^p)) \cap ((m) \cap I) \\
&= zJ +L
\end{align*}
where $J = (z,m_1)^{p-1}\cap (m) \cap I$ and $L = (m_1^p)\cap  (m) \cap I$.

Observe that $J$ is Koszul by the induction hypothesis. From the assumptions, $\supp(y^{\alpha}f)\cap \left(\supp(m)\cup \supp \Gcc(I)\right)=\emptyset$, so 
\begin{align*}
L&= (y^{p\alpha}f^pg^p)\cap  (m) \cap I=y^{p\alpha}f^p((g^p)\cap (m) \cap I)\\
 &=y^{p\alpha}f^p(\lcm(g^p,m) \cap I).
\end{align*}
Since $\supp \lcm(g^p,m) \subseteq \{x_1,\ldots,x_t\}$, the assumptions yields that $\lcm(g^p,m) \cap I$ is Koszul. Therefore $L$ is Koszul.

The above arguments also give 
\begin{align*}
L \subseteq y^{\alpha}\left(y^{(p-1)\alpha}f^pg^p \cap (m) \cap I\right) \subseteq y\left((m_1^{p-1})\cap (m) \cap I \right) \subseteq y J.
\end{align*} 
Thus $(z,m_1)^p \cap (m) \cap I$  is Koszul by Corollary $\ref{cor_Koszulysplit}$.

\textsf{Case 2}: $q\geqslant 1$. Then
\[
(z,m_1)^p \cap (m z^q) \cap I = z^q ((z,m_1)^{p-q} \cap (m) \cap I),
\]
which is Koszul by the induction hypothesis. The proof is complete.
\end{proof}
Now we present the
\begin{proof}[Proof of Theorem \ref{thm_Koszul_symbolicpower_coverideals}]
Assume that $V(H) = \{x_1,\ldots,x_d\}$. Let $R=k[x: x\in V(G)]$. In order to prove the theorem we prove the stronger statement that $(m) \cap J(G)^{(n)}$ is Koszul for every monomial $m\in R$ with $\supp(m)\subseteq \{x_1,\ldots,x_d\}$. Choosing $m=1$, we get the desired conclusion.  

Induct on $d$. 

\textsf{Step 1}: If $d=1$, then $G$ is a star with the edge set $E(G) = \{x_1y_1,\ldots,x_1y_e\}$, where $e\ge 1$. In this case, $J(G) = (x_1,y_1\cdots y_e)$ and $J(G)^{(n)} = (x_1,y_1\cdots y_e)^n$. Assume that $m = x_1^p$. Since
$$
(m) \cap J(G)^{(n)} = (x_1^p) \cap (x_1,y_1\cdots y_e)^n =x_1^p (x_1,y_1\cdots y_e)^{\max\{0,n-p\}},
$$
it suffices to prove that $(x_1,y_1\cdots y_e)^n$ is Koszul for all $n\ge 0$.

If $n=0$, this is clear. Assume that $n\ge 1$ and the statement holds for $n-1$. We write $x=x_1, h=y_1\cdots y_e$. Then
\[
(x_1,y_1\cdots y_e)^n = (x,h)^n=x(x,h)^{n-1}+(h^n).
\]
By the induction hypothesis, $(x,h)^{n-1}$ is Koszul. Applying Corollary \ref{cor_Koszulysplit},
\[
\lind_R (x,h)^n=\lind_R (h^n)=0.
\]

\textsf{Step 2}: Assume that $d\geqslant 2$. Let $y_1,\ldots,y_e$ be the vertices of the pendants of $G$ which are adjacent to $x_d$, where $e\ge 1$. Let $H' = H\setminus \{x_d\}$ and $G' = G\setminus\{x_d,y_1,\ldots,y_e\}$. Then 
$V(H') = \{x_1,\ldots,x_{d-1}\}$ and $G'$ is obtained by adding to each vertex of $H'$ at least one pendant. 

Let $S$ be the polynomial ring with variables being the vertices of $G\setminus \{x_d,y_1\}$. Denote $I=J(G')^{(n)}$.
By the induction hypothesis and Lemma \ref{lem_basechange_ld}, $(m')\cap I$ is Koszul for every monomial $m'\in S$ with $\supp(m') \subseteq\{x_1,\ldots,x_{d-1}\}$.

Denote $y=y_1,z=x_d$, then $R=S[y,z]$.

Let $m_1 =\prod_{x\in N_G(z)}x$, $f=y_2\cdots y_e$, $g=\prod_{x\in N_H(z)}x$.  Then 
\begin{enumerate}[\quad \rm(i)]
\item $m_1=yfg$,
\item $\supp(g) \subseteq \{x_1,\ldots,x_{d-1}\}$,
\item $\supp(f) \cap \big(\supp \Gcc(I) \cup \{x_1,\ldots,x_{d-1}\} \big)=\emptyset$.
\end{enumerate}
Moreover by Fact \ref{fact_symbolic_power_monomial},
\begin{align*}
J(G)^{(n)} &= (z,y_1)^n \cap \cdots \cap (z,y_e)^n \cap \bigcap_{x\in N_H(z)} (z,x)^n \cap  J(G')^{(n)}\\
           &=(z,y_1\cdots y_e g)^n \cap I =(z,m_1)^n \cap I.
\end{align*}
The second equality holds by observing that $(z,y_1\cdots y_eg)$ is a complete intersection, or by direct inspection.

Take any monomial $m\in R$ with $\supp (m) \subseteq \{x_1,\ldots,x_{d-1},z\}$. We can write $m = m'z^p$ where $\supp(m') \subseteq \{x_1,\ldots,x_{d-1}\}$. Hence
$$
(m) \cap J(G)^{(n)} = (z,m_1)^n \cap (m)  \cap I=(z,m_1)^n \cap (m'z^p)  \cap I.
$$  
By Lemma $\ref{Koszul-Products}$, the last ideal is Koszul. This finishes the induction on $d$ and the proof.
\end{proof}

The {\it corona} $\cg(G)$ of a graph $G$ is the graph obtained from $G$ by adding a pendant at each vertex of $G$. More generally, the generalized corona $\cg(G, s)$ is the graph obtained from $G$ by adding $s\ge 1$ pendant edges to each vertex of $G$ (see Figure \ref{fig_corona}).

By Alexander duality \cite[Chapter 8]{HH2}, we know that the edge ideal $I(G)$ is Koszul (having a linear resolution) if and only if $J(G)$ is sequentially Cohen-Macaulay (respectively, Cohen-Macaulay). Combining this with work of Villarreal \cite[Section 4]{Vi1}, Francisco and H\`a \cite[Corollary 3.6]{FH}, we know that $J(\cg(G))$ has a linear resolution. We generalize this for all symbolic powers of $J(\cg(G))$ as follows.

\begin{cor} 
\label{cor_linear_res_symbolpow}
Let $G$ be a simple graph. Then all the symbolic powers of the cover ideal $J(\cg(G))$ have linear resolutions.
\end{cor}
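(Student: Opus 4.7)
The plan is to combine the Koszul property from Theorem~\ref{thm_Koszul_symbolicpower_coverideals} with a direct check that $J(\cg(G))^{(n)}$ is generated in a single degree. Since $\cg(G)$ is obtained from $G$ by attaching exactly one pendant to each of its vertices, Theorem~\ref{thm_Koszul_symbolicpower_coverideals} (applied with $H=G$) will immediately give that $J(\cg(G))^{(n)}$ is Koszul for every $n\ge 1$. Recall that a Koszul ideal has a linear resolution precisely when its minimal generators all share a common degree: if $I$ is componentwise linear and every minimal generator lies in a single degree $d$, then $I=I_{\langle d \rangle}$ has a $d$-linear resolution. Therefore, the task reduces to showing that every minimal monomial generator of $J(\cg(G))^{(n)}$ has degree $rn$, where $r=|V(G)|$.

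To carry out this computation, I would label the vertices of $G$ by $x_1,\dots,x_r$ and let $y_i$ denote the pendant attached to $x_i$ in $\cg(G)$. By Lemma~\ref{lem_symbolic_power_monomial}, a monomial $m=\prod_{i=1}^r x_i^{a_i}y_i^{b_i}$ lies in $J(\cg(G))^{(n)}$ if and only if $a_i+b_i\ge n$ for every $i$, and $a_i+a_j\ge n$ for every edge $\{i,j\}\in E(G)$.

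The key step will be to associate to each $m\in J(\cg(G))^{(n)}$ the auxiliary monomial
$$m'=\prod_{i=1}^r x_i^{\min(a_i,n)}\,y_i^{\max(0,\,n-a_i)}.$$
Then $m'$ divides $m$ componentwise, since $a_i+b_i\ge n$ forces $b_i\ge \max(0,n-a_i)$; and $m'$ itself belongs to $J(\cg(G))^{(n)}$. Indeed, the pendant constraint holds because $\min(a_i,n)+\max(0,n-a_i)=n$, while the $G$-edge constraint $\min(a_i,n)+\min(a_j,n)\ge n$ follows from $a_i+a_j\ge n$ by a short case analysis on whether $a_i$ or $a_j$ exceeds $n$. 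Hence if $m$ is a minimal generator, one must have $m=m'$, forcing $a_i+b_i=n$ for every $i$, and summing over $i$ yields $\deg(m)=rn$.

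I do not anticipate any serious obstacle in this argument; the only delicate point will be confirming that the auxiliary monomial $m'$ continues to satisfy every edge constraint, which is a routine verification. The construction of $m'$ is the conceptual heart of the proof: it produces a canonical divisor of $m$ inside $J(\cg(G))^{(n)}$ whose degree is always $rn$, so minimality of $m$ collapses everything to this single degree.
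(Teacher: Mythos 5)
Your proposal is correct, and the first half (invoking Theorem~\ref{thm_Koszul_symbolicpower_coverideals} with $H=G$, then reducing to showing generation in the single degree $rn$) coincides with the paper's strategy. Where you diverge is in how the single-degree claim is established. The paper argues polyhedrally: Lemma~\ref{lem_vertex_corG} shows every vertex $\v$ of $\smp(J(\cg(G)))$ satisfies $|\v|=r$, whence $\delta(J)=r$; the lower bound $|\alb|\geqslant rn$ for a minimal generator $x^{\alb}$ comes from $\tfrac{1}{n}\alb\in\smp(J)$ together with the decomposition $\smp(J)=\conv\{\v_i\}+\R_+^{2r}$, and the upper bound from Lemma~\ref{L0}. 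You instead give a direct divisibility argument: from the primary decomposition of Lemma~\ref{lem_symbolic_power_monomial} you read off the membership conditions $a_i+b_i\geqslant n$ and $a_i+a_j\geqslant n$, and your canonical monomial $m'=\prod_i x_i^{\min(a_i,n)}y_i^{\max(0,n-a_i)}$ divides $m$, lies in $J(\cg(G))^{(n)}$ (your case checks on the pendant and edge constraints are right), and has degree exactly $rn$; minimality forces $m=m'$. Your route is more elementary and self-contained --- it bypasses Lemma~\ref{lem_vertex_corG}, Lemma~\ref{L0}, and the vertex analysis of Lemma~\ref{lem_vertex_smpJG} entirely, at the cost of being specific to the corona structure, whereas the paper's polyhedral computation also yields $\delta(J)=r$ as a byproduct, which feeds into Theorem~\ref{thm_delta_JG} and the later degree formulas. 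One small point worth making explicit in a write-up: the passage from ``Koszul and generated in a single degree $d$'' to ``$d$-linear resolution'' is exactly Lemma~\ref{lem_Koszul_gendeg} ($\reg I=d(I)$ for Koszul $I$), which you may as well cite rather than re-derive from componentwise linearity.
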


We introduce some more notation. Let $V(G)=\{x_1,\ldots,x_r\}$, where it is harmless to assume that $r\ge 1$. Let $y_1,\ldots,y_r$ be the new vertices in $V(\cg(G))$, where $y_i$ is only adjacent to $x_i$ for all $i=1,\ldots,r$. 

\begin{convn}
\label{convn_coordinates}
We denote the coordinates of the ambient $\R^{2r}$ containing $\smp(J(\cg(G)))$ by $x_1,\ldots,x_r,y_1,\ldots,y_r$ instead of $x_1,\ldots,x_r,x_{r+1},\ldots,x_{2r}$, thus $y_i=x_{r+i}$ for $i=1,\ldots,r$.
\end{convn}
 The proof of Corollary \ref{cor_linear_res_symbolpow} depends on the following lemma (where Convention \ref{convn_coordinates} is in force).

\begin{lem}
\label{lem_vertex_corG}
Denote $J=J(\cg(G))$. Then for any vertex $\alb \in \R^{2r}$ of $\smp(J)$, up to a relabeling of the variables, there exist integers $0\le p\le q \le r$ such that $\alb$ is a solution of the following system:
\[
\begin{cases}
x_1=\cdots=x_p=y_{p+1}=\cdots=y_q=0,\\
y_1=\cdots=y_p=x_{p+1}=\cdots=x_q=1,\\
x_j=y_j=1/2, \quad \text{if $q+1\le j\le r$}.
\end{cases}
\]
In particular, $|\alb|=r$.
\end{lem}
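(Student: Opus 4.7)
The plan is to leverage Lemma \ref{lem_vertex_smpJG} together with the very specific structure of $\cg(G)$: each vertex $y_i$ is a leaf whose unique neighbor is $x_i$, and $\{x_i,y_i\}\in E(\cg(G))$ is a pendant edge. First I would record that, by Lemma \ref{lem_vertex_smpJG}, every coordinate of a vertex $\alb$ of $\smp(J)$ lies in $\{0,1/2,1\}$, so it suffices to analyze the possible values of each pendant pair $(\alpha_{x_i},\alpha_{y_i})\in\{0,1/2,1\}^2$ subject to the edge constraint $\alpha_{x_i}+\alpha_{y_i}\ge 1$ and the four conditions (i)--(iv) of that lemma.

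The core of the argument is a short case analysis eliminating all configurations except $(0,1)$, $(1,0)$ and $(1/2,1/2)$. The edge inequality rules out $(0,0)$, $(0,1/2)$ and $(1/2,0)$. For the remaining cases:
\begin{itemize}
\item $(1,1)$: then $y_i\in S_1$ is a leaf not in $S_0$ with unique neighbor $x_i$, so (iv) of Lemma \ref{lem_vertex_smpJG} forces $x_i\notin S_1$, contradicting $\alpha_{x_i}=1$.
\item $(1,1/2)$: then $y_i\in S_{1/2}$, but its only neighbor $x_i$ lies in $S_1$; hence $y_i$ is an isolated vertex in the induced subgraph on $S_{1/2}$, which is a bipartite component, contradicting (iii).
\item $(1/2,1)$: then $y_i\in S_1=N(S_0)$ by (ii), so $y_i$ is adjacent to some vertex of $S_0$; but its only neighbor is $x_i\in S_{1/2}$, a contradiction.
\end{itemize}

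After this case analysis, each pair $(\alpha_{x_i},\alpha_{y_i})$ falls into exactly one of the three allowed types, so relabelling the indices to group first the pairs of type $(0,1)$, then those of type $(1,0)$, then those of type $(1/2,1/2)$ produces the described integers $p\le q$ and the stated system. Finally, $|\alb|=\sum_{i=1}^d(\alpha_{x_i}+\alpha_{y_i})=p\cdot 1+(q-p)\cdot 1+(d-q)\cdot 1=d$, proving the last assertion.

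The only potentially subtle point is handling the $(1,1/2)$ case correctly, since one must be careful that $y_i$ really is a component by itself in the induced subgraph on $S_{1/2}$; here it is crucial that $y_i$ is a leaf in $\cg(G)$, so its unique neighbor being outside $S_{1/2}$ immediately isolates it. The rest of the argument is essentially bookkeeping, so I do not foresee any further obstacle.
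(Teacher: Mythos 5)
Your proof is correct and follows essentially the same route as the paper: everything is extracted from Lemma \ref{lem_vertex_smpJG}, and your pendant-pair-by-pair case analysis is just a reorganization of the paper's global determination of the sets $S_0$, $S_1$, $S_{1/2}$ (the paper eliminates the $(1,1/2)$ configuration via condition (iv) of that lemma rather than condition (iii), but both arguments are valid). No gaps.
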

\begin{proof}
For the first assertion, note that by Lemma \ref{lem_vertex_smpJG}, $\alb_i \in \{0,1,1/2\}$ for all $i=1,\ldots,2r$. Denote $S_0=\{x_i: \alb_i=0\}$, $S_1=\{x_i: \alb_i=1\}$, $S_{1/2}=\{x_i: \alb_i=1/2\}$. By Lemma \ref{lem_vertex_smpJG}, we also have $S_0$ is an independent set of $\cg(G)$.

Without loss of generality, we can assume that $S_0=\{x_1,\ldots,x_p, y_{p+1},\ldots,y_q\}$ for some $0\le p\le q \le r$ (recall Convention \ref{convn_coordinates}). We have to show that $S_1=\{y_1,\ldots,y_q,x_{p+1},\ldots,x_q\}$.

By Lemma \ref{lem_vertex_smpJG}, $\{y_1,\ldots,y_q,x_{p+1},\ldots,x_q\} \subseteq N(S_0)=S_1$. Clearly $y_{q+1},\ldots,y_r \notin S_1$ since none of them belongs to $N(S_0)$. Hence it remains to show that $x_i\notin S_1$ for $q+1\le i\le r$.

By the definition of $S_0$, $y_i\notin S_0$. Now $y_i$ is a leaf of $\cg(G)$ and $N(y_i)=\{x_i\}$, so by Lemma \ref{lem_vertex_smpJG}, $x_i\notin S_1$, as desired. 

The second assertion now follows from accounting. The proof is concluded.
\end{proof}

\begin{proof}[Proof of Corollary \ref{cor_linear_res_symbolpow}] It is harmless to assume that $r = |V(G)|\ge 1$, as mentioned above. By Theorem \ref{thm_Koszul_symbolicpower_coverideals} it suffices to show that $J^{(n)}=J(\cg(G))^{(n)}$ generated by monomials of degree $rn$. 

\textsf{Step 1:} Take any vertex $\v \in \R^{2r}$ of $\smp(J)$. By Lemma \ref{lem_vertex_corG}, it follows that $|\v|=r$; in particular $\delta(J) = r$.

\textsf{Step 2:} Let $x^{\alb}$ be a minimal generator of $J^{(n)}$. Since $\alb \in \smp_n(J)$, we get $\dfrac{1}{n}\alb \in \smp(J)$. Together with Step 1, it follows that
$$
\frac{1}{n} |\alb| \geqslant \min\{|\v| \mid \v \text{ is a vertex of } \smp(J)\}= r,
$$
namely  $|\alb| \geqslant nr$. 

On the other hand, by Lemma \ref{lem_upper_bound_omega_via_delta}, $$|\alb| \leqslant \omega(J^{(n)}) \leqslant \delta(J)n = rn.$$
Thus $|\alb| = nr$, as require.
\end{proof}

\begin{rem}
A graph $G$ which contains no induced cycle of length at least $4$ is called a \emph{chordal} graph. We say that $G$ is a \emph{star graph based on a complete graph $K_m$} if $G$ is connected and $V(G)=\{1,\ldots,m,m+1,\ldots,m+g\}$ for some $g\ge 0$ such that: 
\begin{enumerate}
\item the complete graph on $\{1,\ldots,m\}$ is a subgraph of $G$, and,
\item there is no edge in $G$ connecting $i$ and $j$ for all $m+1 \le i<j \le m+g$.
\end{enumerate}
Any star graph based on a complete graph is chordal.

Let $G$ be a chordal graph. Francisco and Van Tuyl \cite[Proof of Theorem 3.2]{FV} showed that for such a $G$, $J(G)$ is Koszul\footnote{This result can be proved quickly using Corollary \ref{cor_Koszulysplit}.}. In \cite{HHO}, Herzog, Hibi and Ohsugi conjectured that all the powers of $J(G)$ are Koszul. Furthermore, in \emph{ibid.}, Theorem 3.3, they confirmed this in the case $G$ is a star graph based on a complete graph $K_m$. Hence it is natural to ask: If $G$ is star graph based on a complete graph, is it true that $J(G)^{(n)}$ Koszul for all $n\geqslant 1$?

The answer is ``No!'' Here is a counterexample. Consider the graph $G_2$ in Figure \ref{fig_diamond}. It is the complete graph on the vertices $\{a,b,c,d\}$ with one edge removed. The corresponding cover ideal is
\[
J=J(G_2)=(bc,abd,acd).
\]
Since $G_2$ is a star graph based on $K_2$, $J$ and all of its ordinary powers are Koszul by \cite[Theorem 3.3]{HHO}. But $J^{(n)}$ is not Koszul for all $n\ge 2$ by \cite[Page 186]{CE}.
\end{rem}

It is natural to ask
\begin{quest}
Classify all star graphs based on a complete graph $G$ such that all the symbolic powers of $J(G)$ are Koszul.
\end{quest}

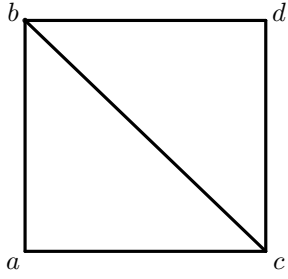
\begin{figure}
\centering
\begin{tikzpicture}[scale=0.8,line cap=round,line join=round,>=triangle 45,x=1.0cm,y=1.0cm]
\clip(3.5,-1.32) rectangle (10.28,4.4);
\draw [line width=1.2pt] (4.98,3.34)-- (8.98,3.34);
\draw [line width=1.2pt] (4.98,3.34)-- (4.98,-0.5);
\draw [line width=1.2pt] (4.98,3.34)-- (8.98,-0.5);
\draw [line width=1.2pt] (8.98,3.34)-- (8.98,-0.5);
\draw [line width=1.2pt] (4.98,-0.5)-- (8.98,-0.5);
\draw [color=black] (4.98,3.34) circle (1.0pt);
\draw[color=black] (4.78,-0.7) node {$a$};
\draw[color=black] (4.78,3.48) node {$b$};
\draw[color=black] (9.20,-0.7) node {$c$};
\draw[color=black] (9.20,3.48) node {$d$};
\end{tikzpicture}
\caption{The graph $G_2$}
\label{fig_diamond}
\end{figure}

Observe that a subset $\tau \subseteq V(G)$ is a minimal vertex cover of $G$ if and only if $V(G)\setminus \tau$ is a maximal independent set of $G$.

Let $K_m$ be a complete graph with $m$ vertices and $G = \cg(K_m,s)$ where $m\geqslant 3$ and $s\geqslant 2$. In the rest of the paper we show that both $\omega(J(G)^{(n)})$ and $\reg (J(G)^{(n)})$ are not necessarily asymptotic linear functions in $n$. 
\begin{lem}
\label{lem_deg_delta_corKms} For any $m\geqslant 3$ and $s\geqslant 2$, we have:
\begin{enumerate}[\quad \rm(1)]
\item $\omega(J(\cg(K_m,s))) = m+s-1$.
\item $\delta(J(\cg(K_m,s)))=\frac{1}{2}m(s+1)$.
\end{enumerate}
\end{lem}
\begin{proof} Let $G = \cg(K_m,s)$. Then $G$ has $r=m(s+1)$ vertices and $ms$ leaves.

(1) Let $S$ be a maximal independent set of $G$. Then either $S$ is the set of leaves of $G$, or $S$ consists of a vertex of $K_m$ and  $(m-1)s$ leaves which are incident with the remaining vertices of $K_m$. Thus, the cover number of $G$ is 
$$m(s+1) - (1+(m-1)s)=m+s-1,$$
and thus $\omega(J(G)) = m+s-1$.

(2)  Since $|V(G)|=m(s+1),$ by Theorem \ref{thm_delta_JG} we only need to prove the following: Let $S$ be an independent set of $G$ such that $G\setminus N[S]$ has no bipartite components. Then $|N(S)| \le |S|$, with equality happens when $S=\emptyset$. 

We consider three cases:

\textbf{Case 1}: $S = \emptyset$. Then $N(S)=\emptyset$ and $|N(S)| - |S| = 0$.

\textbf{Case 2}: $S$ contains a vertex of $K_m$, say $v$. Then $G\setminus N[S]$ is either empty or totally disconnected, in which case it is bipartite. Since $G\setminus N[S]$ has no bipartite component, the first alternative happens. It follows that $S$ consists of $v$ and all the leaves not adjacent to it. Thus, $|S| = 1 + (m-1)s \ge |N(S)| = s + m -1$, since
\[
1+(m-1)s-(s+m-1)=(m-2)(s-1)>0.
\]

\textbf{Case 3}: $S$ contains only leaves of $G$. Let $x_1,\ldots,x_m$ be vertices of $K_m$. Then $N(S)$ consists only of vertices of $K_m$, say $x_1,\ldots,x_t$ for $1\le t\le m$. Each $x_i$ requires at least a leaf adjacent to it, so clearly $|S|\ge t = |N(S)|$. 

The proof is concluded.
\end{proof}

Finally, we present a family of counterexamples to Question \ref{quest_2_linear_behavior}.
\begin{thm} 
\label{thm_non-linear}
Let $G=\cg(K_m,s)$ where $m\geqslant 3$ and $s\geqslant 2$. Let $J=J(G)$ be its cover ideal. Then for all $n\geqslant 0$,
\begin{enumerate}[\quad \rm(1)]
\item $\reg J^{(2n)}=\omega(J^{(2n)}) = m(s+1)n$;
\item $\reg (J^{(2n+1)})=\omega(J^{(2n+1)}) = m(s+1)n + m+s-1$.
\end{enumerate}
In particular, for all $n$,
$$
\reg (J^{(n)})=\omega(J^{(n)})=(m+s-1)n + (m-2)(s-1) \left\lfloor \frac{n}{2} \right \rfloor,
$$ 
which is not an eventually linear function of $n$.
\end{thm}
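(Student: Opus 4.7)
The plan is to reduce the entire theorem to the two general results established earlier, namely Theorem \ref{thm_gen_degree_symbolicpowers} (computation of $d(J(G)^{(n)})$) and Theorem \ref{thm_Koszul_symbolicpower_coverideals} (Koszul property), then extract the consequences by routine arithmetic.

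First, I would observe that $G = \cg(K_m,s)$ is obtained from $K_m$ by adding $s\ge 1$ pendants at each vertex, so Theorem \ref{thm_Koszul_symbolicpower_coverideals} applies and every symbolic power $J^{(n)}$ is a Koszul module. Combined with Lemma \ref{lem_Koszul_gendeg}, this gives $\reg(J^{(n)}) = d(J^{(n)})$ for all $n$, so only the maximal generating degree needs to be computed. Next, by Lemma \ref{prop_deg_delta_corKm,s} we have $d(J) = m+s-1$ and $\delta(J) = m(s+1)/2$. Since $|V(G)|=m(s+1)$, this is exactly $r/2$, so the extra hypothesis of Theorem \ref{thm_gen_degree_symbolicpowers}(3) is satisfied.

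With these facts in hand, Theorem \ref{thm_gen_degree_symbolicpowers}(1) immediately yields part (1) of the statement, $d(J^{(2n)}) = \delta(J)\cdot 2n = m(s+1)n$. Likewise, Theorem \ref{thm_gen_degree_symbolicpowers}(3) gives $e = d(J) = m+s-1$, and hence $d(J^{(2n+1)}) = \delta(J)\cdot 2n + d(J) = m(s+1)n + (m+s-1)$ for every $n\ge 0$. Combining parts (1) and (2) with a case split on parity, the closed-form expression
\[
(m+s-1)n + (m-2)(s-1)\left\lfloor \tfrac{n}{2} \right\rfloor
\]
follows by elementary algebra; for $n = 2k$ one checks the right-hand side equals $km(s+1)$, and for $n=2k+1$ it equals $m(s+1)k + (m+s-1)$, matching the two cases.

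Finally, for the non-linearity assertion, I would argue by contradiction: suppose $\reg(J^{(n)}) = an + b$ for all sufficiently large $n$. Reading off the even-index subsequence forces $a = m(s+1)/2$ and $b = 0$, while the odd-index subsequence forces $a(2k+1) = m(s+1)k + (m+s-1)$, which upon substituting $a$ collapses to $m(s+1) = 2(m+s-1)$, equivalently $(m-2)(s-1) = 0$. Since $m\ge 3$ and $s\ge 2$, this is impossible, and so $\reg(J^{(n)})$ cannot be eventually linear. None of the steps present a real obstacle; the whole proof is an assembly of previously established machinery, with the only care needed being the arithmetic verification of the closed-form expression and the final contradiction.
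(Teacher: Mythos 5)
Your proposal is correct and follows essentially the same route as the paper: Koszulness from Theorem \ref{thm_Koszul_symbolicpower_coverideals} plus Lemma \ref{lem_Koszul_gendeg} to reduce $\reg$ to $d$, then Lemma \ref{prop_deg_delta_corKm,s} to verify $\delta(J)=m(s+1)/2=r/2$ so that Theorem \ref{thm_gen_degree_symbolicpowers}(1) and (3) give the two formulas. The only difference is that you spell out the parity arithmetic and the non-linearity contradiction explicitly, which the paper leaves as routine.
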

\begin{proof} By Theorem \ref{thm_Koszul_symbolicpower_coverideals}, $J^{(n)}$ is Koszul for all $n\ge 1$. Hence by Lemma \ref{lem_Koszul_gendeg}, $\reg (J^{(n)})=\omega(J^{(n)})$ for all $n$. 

Note that by Lemma \ref{lem_deg_delta_corKms}, $\delta(J)=\delta(J(G))=m(s+1)/2$, namely half the number of vertices of $G$. Hence by Theorem \ref{thm_gen_degree_symbolicpowers}, for all $n\geqslant 0$
\begin{align*}
\omega(J^{(2n)}) &=2n\delta(J),\\
\omega(J^{(2n+1)}) &= 2n\delta(J) + \omega(J).
\end{align*}
From Lemma \ref{lem_deg_delta_corKms}(1), $\omega(J)=m+s-1$, so the desired formulas follow.
\end{proof}

\section*{Acknowledgments}  
L.X. Dung, T.T. Hien and T.N. Trung are partially supported by NAFOSTED (Vietnam) under the grant number 101.04-2018.307. H.D. Nguyen is partially supported by International Centre for Research and Postgraduate Training in Mathematics (ICRTM) under grant number ICRTM01$\_$2020.05. H.D. Nguyen and T.N. Trung  are also grateful to the support of Project CT 0000.03/19-21 of the Vietnam Academy of Science and Technology. Part of this work was done during our stay at the Vietnam Institute for Advanced Study in Mathematics.

Finally, the authors would like to thank the anonymous referee for useful comments which have helped them to improve the quality 
of this paper.

\end{document}